\documentclass[11pt]{amsart}
\usepackage[utf8]{inputenc}
\usepackage{lmodern}
\usepackage{amsmath, amsthm, amssymb, amsfonts,bm}
\usepackage[normalem]{ulem}
\usepackage{hyperref}
\usepackage{varwidth}
\usepackage{adjustbox}
\usepackage{enumerate}

\usepackage{verbatim} 
\usepackage{longtable}

\usepackage{mathtools}

\usepackage{enumitem}
\usepackage{tikz}
\usetikzlibrary{decorations.pathmorphing}
\tikzset{snake it/.style={decorate, decoration=snake}}
\usepackage{tikz-cd}
\usetikzlibrary{positioning}

\usepackage{caption}

\usepackage{tikz-cd}
\usetikzlibrary{arrows}
\usepackage{ytableau}
\usepackage{mathdots}
\theoremstyle{plain}

\newtheorem{theorem}{Theorem}[section]

\newtheorem{definition}[theorem]{Definition}
\newtheorem{assumption}[theorem]{Assumption}
\newtheorem{example}[theorem]{Example}
\newtheorem{proposition}[theorem]{Proposition}
\newtheorem{prop}[theorem]{Proposition}
\theoremstyle{definition}
\newtheorem{exmp}[theorem]{Examples}

\theoremstyle{remark}
\newtheorem{rem}[theorem]{Remark}
\newtheorem{lem}[theorem]{Lemma}
\newtheorem{ques}[theorem]{Question}

\newtheorem{lemma}[theorem]{Lemma}
\newtheorem{corollary}[theorem]{Corollary}
\newtheorem{remark}[theorem]{Remark}

\newcommand{\C}{\mathbb C}

\newcommand{\Pic}{\mathrm{Pic}}

\newcommand{\End}{\mathcal E nd}
\newcommand{\GL}{\mathrm{GL}}
\newcommand{\id}{\mathrm{id}}
\newcommand{\gr}{\operatorname{gr}}
\newcommand{\rk}{\operatorname{rk}}

\newcommand{\Coker}{\operatorname{Coker}}

\newcommand{\HH}{\mathbb H}

\newcommand{\ov}[1]{\overline{#1}}

\newcommand{\Flag}{\mathrm{Flag}}
\newcommand{\Fr}{\operatorname{Fr}}

\newcommand{\PEnd}{\mathcal P\!\End}
\newcommand{\SPEnd}{\mathcal{SP}\!\End}

\newcommand{\cO}{\mathcal O}

\newcommand{\cK}{\mathcal K}
\newcommand{\cM}{\mathcal M}

\newcommand{\Higgs}{\mathrm{Higgs}}

\newcommand{\tr}{\mathrm{tr}}
\newcommand{\nmd}{\mathcal{N}(\vec{m})}
\newcommand{\pardeg}{\operatorname{pardeg}}
\newcommand{\parmu}{\operatorname{par\mu}}

\DeclareFontFamily{OT1}{rsfs}{}
\DeclareFontShape{OT1}{rsfs}{n}{it}{<-> rsfs10}{}
\DeclareMathAlphabet{\curly}{OT1}{rsfs}{n}{it}

\def\C{\mathbb{C}}

\def\O{\mathcal{O}}

\def\a{\alpha}

\def\m{\mu}

\def\m{\mathcal}

\def\bb{\mathbb}

\def\deg{\textrm{deg}}

\def\rk{\textrm{rank}}
\def\dim{\mathrm{dim}}

\def\ker{\textrm{ker}}

\def\gr{\textrm{gr}}
\def\Pic{\textrm{Pic}}

\def\res{\mathrm{res}}

\def\uxi{{\underline{\xi}}}
\def\vxi{\vec{\xi}}
\def\vm{\vec{m}}
\def\va{\vec{\a}}

\def\bmxi{B(\vm)_{\vxi}}

\def\um{{\underline{m}}}

\def\nm{\mathcal{N}(\vm)}

\newcommand{\pr}{\textrm{pr}}

\newsavebox{\leftbox} \newsavebox{\rightbox}%

\NewDocumentCommand{\lrboxbrace}{s O{\{} O{\}} O{0.05\linewidth} m O{0.5\linewidth} m}{% \lrboxbrace[<lbrace>][<rbrace>][<lwidth>]{<ltext>}[<rwidth>]{<rtext>}
  \begin{lrbox}{\leftbox}% Left box
    \IfBooleanTF{#1}% starred/unstarred
      {\begin{varwidth}{#4}#5\end{varwidth}}
      {\begin{minipage}{#4}#5\end{minipage}}
  \end{lrbox}
  \begin{lrbox}{\rightbox}% Right box
    \IfBooleanTF{#1}% starred/unstarred
      {\begin{varwidth}{#6}#7\end{varwidth}}
      {\begin{minipage}{#6}#7\end{minipage}}
  \end{lrbox}
  \ensuremath{\usebox\leftbox\left\{\,\usebox\rightbox\,\right\}}
}

\usepackage[backend=biber,style=alphabetic,sorting=nyt, giveninits=true,doi=false,isbn=false,url=false,maxbibnames=99]{biblatex} %main package for bibtex
\usepackage{vmargin}
\setpapersize{A4}
\setmarginsrb{27mm}{12mm}{27mm}{12mm}%
{12mm}{10mm}{5mm}{10mm}

\usepackage{tikz}
\usepackage{lmodern}
\usetikzlibrary{decorations.pathmorphing}

\begin{document}
\title{Symplectic leaves of meromorphic Hitchin systems}

\author{Jia Choon Lee and Sukjoo Lee}
\address{Institute of Geometry and Physics, University of Science and Technology of China, 96 Jinzhai Road, Hefei 230026 P.R. China}
\email{jiachoonlee@ustc.edu.cn}

\address{Institute for Basic Science - Center for Geometry and Physics, 79 Jigok-ro 127 beon-gil, Nam-gu, Pohang, Gyeongbuk, KOREA 37673
}
\email{sukjoo216@ibs.re.kr}

\begin{abstract}
    The moduli space of meromorphic Higgs bundles admits a Poisson structure due to the independent work of Bottacin and Markman. In this paper, we revisit the symplectic leaves of this Poisson structure for the tame case. We study the partial compactification of the restricted Hitchin map on the symplectic leaves to an algebraically completely integrable system. In particular, we show that such a partial compactification is realized by the moduli spaces of $\vec{\xi}$-parabolic Higgs bundles. These same moduli spaces also provide a symplectic resolution of the normalization of the closure of the corresponding symplectic leaves. Finally, we discuss connectedness results for the corresponding Betti moduli spaces under the tame non-abelian Hodge correspondence. 
\end{abstract}

\baselineskip=14.5pt
\maketitle

\setcounter{tocdepth}{2} 

\tableofcontents

\section{Introduction}
\subsection{Overview}
Higgs bundles were introduced by Hitchin in his study of the self-duality equations on Riemann surfaces \cite{Hitchin1987SelfDuality}. One of the fundamental features of the moduli space of stable Higgs bundles is that it carries a natural holomorphic symplectic structure, and the Hitchin map defines an algebraically completely integrable system, now known as the Hitchin system \cite{Hitchin1987Stable,DonagiMarkman1996}.

A natural generalization is obtained by allowing the Higgs field to have poles at marked points. Let $C$ be a smooth projective curve and let $D\subset C$ be a reduced effective divisor. For simplicity, only in this overview section, we discuss the case where $D$ consists of a single point $p$. A meromorphic Higgs bundle is a pair $(E,\Phi)$, where $E$ is a rank $r$ vector bundle on $C$ and $\Phi:E\to E\otimes K_C(D)$ is an $\mathcal O_C$-linear map. We write $\Higgs^s(C,D)$ for the moduli space of stable meromorphic Higgs bundles, which is also known as the meromorphic Hitchin system. Unlike the case without poles, $\Higgs^s(C,D)$ does not carry a holomorphic symplectic structure on the whole space. Instead, it carries a holomorphic Poisson structure due to the independent work of Bottacin \cite{Bottacin1995symplectic} and Markman \cite{markmanspectral}.

For a Poisson variety, a basic geometric problem is to understand its symplectic leaves. In the setting of meromorphic Higgs bundles, the residue at the pole provides the relevant local model. The residue $\res_p(\Phi)$ is an element of $\mathfrak{gl}_r$. Identifying $\mathfrak{gl}_r\cong \mathfrak{gl}_r^*$ under the trace pairing, the adjoint orbits in $\mathfrak{gl}_r$  are the symplectic leaves of the standard Lie--Poisson structure and their induced symplectic forms are the Kirillov--Kostant--Souriau forms. This suggests that, on the global moduli space $\Higgs^s(C,D)$, the loci obtained by requiring the residue to lie in a fixed adjoint orbit $\cO$ should play the role of symplectic leaves. We denote these loci by $\Higgs^{\cO,s}(C,D)$. 

\begin{ques}
For an arbitrary adjoint orbit $\cO$, is $\Higgs^{\mathcal O,s}(C,D)$ a symplectic leaf of $\Higgs^s(C,D)$? 
\end{ques}
Recall that a symplectic leaf of a Poisson variety is a maximal connected locally closed smooth subvariety tangent to the characteristic distribution of the Poisson structure. Up to the question of connectedness, this picture is already present in the work of Bottacin \cite[Theorem 4.7.5]{Bottacin1995symplectic} and Markman \cite[Corollary 8.10]{markmanspectral}, especially for symplectic leaves of maximal rank. For the purposes of the present paper, we will need to keep track of the explicit infinitesimal description of $\Higgs^{\cO,s}(C,D)$ in terms of characteristic distribution and to address the connectedness of $\Higgs^{\cO,s}(C,D)$ to ensure that it is a single symplectic leaf.

Once the symplectic leaf is identified, a further question is to understand the Hitchin map restricted to it. It is shown in \cite[Section 4.7] {Bottacin1995symplectic} and \cite[Section 8.3]{markmanspectral} that the Hitchin map on $\Higgs^s(C, D)$ is a Poisson integrable system in a generalized sense. However, this does not immediately imply that its restriction to an arbitrary symplectic leaf is an algebraically completely integrable system in the sense that it is a proper flat Lagrangian fibration with generic fibers abelian varieties. There are two basic issues. First, when the symplectic leaf is not maximal rank, the corresponding spectral curves are typically singular, so the usual description of the generic Hitchin fibers as Jacobians of smooth spectral curves no longer applies directly. Second, if the symplectic leaf is not closed in $\Higgs^s(C, D)$, the restricted Hitchin map may fail to be proper. 
\begin{ques}
    For an arbitrary adjoint orbit $\cO$, can the restricted Hitchin map on $\Higgs^{\cO,s}(C,D)$ be partially compactified to an algebraically completely integrable system? 
\end{ques}
This question is closely related to the "Lax project"  formulated by Boalch in \cite[Section 1]{boalch2018wild}. Boalch proposes to classify integrable systems admitting a Lax representation, meaning that it is isomorphic to a symplectic leaf of a meromorphic Hitchin system, with the Hamiltonians given by restricting the Hitchin map.\footnote{This framework also includes irregular meromorphic Higgs fields, which we do not consider in this paper.} However, Boalch restricts the program to a class of "good" Lax representations, which correspond to closed symplectic leaves of maximal rank in our case, since a Poisson integrable system does not necessarily restrict to an algebraically completely integrable system on every symplectic leaf. Thus the present question can be viewed as complementary to the "good" Lax representations by considering symplectic leaves of arbitrary rank. 

There are natural candidates for such a partial compactification. First, one may take the closure of $\Higgs^{\cO,s}(C,D)$ in $\Higgs^s(C, D)$. Second, one may replace the residue orbit $\cO$ by its closure $\overline{\cO}$ and consider the corresponding moduli space $\Higgs^{\overline{\cO},s}(C,D)$. While these two constructions are not a priori the same, one of our results is that they coincide. However, this partial compactification is generally singular, reflecting the singularities of $\overline{\cO}.$ 

The key point is that the singularities of $\overline{\cO}$ admit a natural resolution by adding compatible flag data. This suggests replacing a meromorphic Higgs bundle with residue in $\overline{\cO}$ by a parabolic Higgs bundle with conditions on the flags determined by $\cO$. This leads us to consider the moduli space of $\vxi$-parabolic Higgs bundles, whose smoothness was established previously in \cite{LeeLee2024,LeeLee2025}, in order to resolve $\Higgs^{\overline{\cO}, s}(C,D)$. Moreover,  these moduli spaces form algebraically completely integrable systems, providing an answer to the partial compactification question above.

As a byproduct, we obtain some topological consequences for certain Betti moduli spaces via Simpson's tame non-abelian Hodge correspondence \cite{simpson-noncompact}. In particular, we obtain connectedness results for moduli spaces of filtered local systems and for irreducible loci of character varieties with prescribed local monodromy.

\subsection{Main results}
Let $C$ be a smooth projective complex curve of genus $g$, and let $D=p_1+\cdots+p_n$ be a reduced effective divisor. We fix a rank $r$ and a degree $d$. To each $p_i$, we assign a conjugacy class $\cO_i$ of $\mathfrak{gl}_r$ and its closure $\ov\cO_i$. We set $\cO:=\prod_{i=1}^n \cO_i$ and $\ov{\cO}:=\prod_{i=1}^n \ov{\cO_i}$. Throughout this article, we always assume that for each $A_i \in \cO_i$, the sum of traces is zero;
\begin{equation}\label{eq:trace-compatibility}
\sum_{i=1}^n\tr(A_i)=0 \quad \text{for }A_i \in \cO_i. 
\end{equation}
Note that if there exists a meromorphic Higgs bundle $(E,\Phi)$ with
$\res_{p_i}(\Phi)\in \cO_i$ for all $i$, then the condition
\eqref{eq:trace-compatibility} holds automatically by the residue theorem applied to $\tr(\Phi)$. As before, we write
$\Higgs^{\cO,s}(C,D)\subset \Higgs^s(C,D)$ and
$\Higgs^{\ov{\cO},s}(C,D)\subset \Higgs^s(C,D)$ for the corresponding stable
loci defined by the orbits $\cO_i$ and their closures $\ov{\cO_i}$, respectively.
We use the analogous notation
$\Higgs^{\cO,ss}(C,D)\subset \Higgs^{\ov{\cO},ss}(C,D)\subset
\Higgs^{ss}(C,D)$ for the semistable moduli spaces.

\begin{theorem}[Theorem~\ref{thm:connected HiggsO}, Theorem~\ref{thm:orbit-stratum-is-leaf}]\label{thm:poisson-closure}
	For a product of conjugacy classes $\cO$ satisfying the condition \eqref{eq:trace-compatibility}, if $\Higgs^{\cO,s}(C,D)$ is non-empty, then it is irreducible and is a symplectic leaf of the Poisson variety $\Higgs^s(C,D)$ whose closure in $\Higgs^{s}(C,D)$ is $\Higgs^{\ov{\cO},s}(C,D)$. In particular, $\Higgs^{\ov{\cO},s}(C,D)$ is irreducible.  
\end{theorem}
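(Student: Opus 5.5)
We prove the four assertions in turn: the infinitesimal leaf property, smoothness, irreducibility of $\Higgs^{\cO,s}(C,D)$, and the closure statement.

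\emph{Deformation theory.} The tangent space to $\Higgs^s(C,D)$ at $(E,\Phi)$ is $\HH^1$ of the complex $C^\bullet:\End(E)\xrightarrow{[\cdot,\Phi]}\End(E)\otimes K_C(D)$. Following Bottacin and Markman, the Poisson anchor $\HH^1(C^\bullet)^*\cong \HH^1(C^{\bullet\vee}\otimes K_C[1])\to \HH^1(C^\bullet)$ is induced by the morphism of complexes $C'^\bullet\to C^\bullet$, where $C'^\bullet:\End(E)(-D)\to \End(E)\otimes K_C$. The image of this map is the set of first-order deformations $(\dot E,\dot\Phi)$ whose residue variation at each $p_i$ has the form $[\res_{p_i}\Phi,\xi_i]$ with $\xi_i$ induced by the deformation of the bundle. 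These are exactly the deformations that keep $\res_{p_i}\Phi$ in its adjoint orbit $\cO_i$.

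We model $\Higgs^{\cO,s}(C,D)$ by the hypercohomology of the intermediate complex $\End(E)\to \End(E)\otimes K_C(D)$, restricted so that residues lie in $T\cO_i=[\mathfrak{gl}_r,\res_{p_i}\Phi]$. For stable $(E,\Phi)$ we check that $\HH^0$ is the scalars and that the corresponding $\HH^2$ is dual to an $\HH^0$ that is again just scalars, cut down by the trace condition \eqref{eq:trace-compatibility}. This shows that $\Higgs^{\cO,s}(C,D)$ is smooth. Its tangent space then equals the image of the anchor, so it is a union of open pieces of leaves, and its dimension is $2r^2(g-1)+2+\sum_i\dim\cO_i$.

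\emph{Irreducibility.} This is the step we expect to be the main obstacle. To get a single leaf we need connectedness. Here we invoke the earlier results cited as Theorem~\ref{thm:connected HiggsO}, which identify things via the $\vxi$-parabolic moduli of \cite{LeeLee2024,LeeLee2025}.

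Choose parabolic weights and flag types $\vxi$ adapted to $\cO$. The map $\mathcal M_{\vxi}\to \Higgs^{\ov\cO}(C,D)$ that forgets flags is then a resolution, and it is an isomorphism over the open orbit stratum, just like the Springer-type resolution $\widetilde{\ov\cO_i}\to\ov\cO_i$.

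Connectedness of $\mathcal M_{\vxi}$ follows from the $\C^*$-action scaling $\Phi$. The Hitchin map is proper, so every point flows to a fixed point in the nilpotent cone. Alternatively, irreducibility follows from Simpson's correspondence together with connectedness of the character variety with prescribed local monodromy, which is known when it is nonempty.

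Since $\Higgs^{\cO,s}$ is a dense open subset of an irreducible space, it is irreducible. Being smooth, connected, and of the rank of the anchor, it is a maximal integral submanifold, hence a leaf.

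\emph{Closure.} We have $\overline{\Higgs^{\cO,s}}\subseteq\Higgs^{\ov\cO,s}$ because the residue condition is closed. For the reverse inclusion, we show by a dimension count that each stratum $\Higgs^{\cO',s}$ with $\cO'\subset\ov\cO\setminus\cO$ has strictly smaller dimension. We also show that every point of it deforms into $\Higgs^{\cO,s}$.

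The deformation argument is local-to-global. We take a local deformation of the residue from $\cO'$ into $\cO$, which exists because $\cO'\subset\ov\cO$. We then lift it using the surjectivity of $\HH^1$ onto $\bigoplus_i T_{A_i}\ov\cO_i$ modulo the tangent directions, which follows from the vanishing of $\HH^2$ established above.

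Alternatively, the resolution $\mathcal M_{\vxi}$ is irreducible and surjects onto $\Higgs^{\ov\cO,s}$, since every residue in $\ov\cO_i$ admits a compatible flag. Hence $\Higgs^{\ov\cO,s}$ is irreducible and equals the closure of its dense open subset $\Higgs^{\cO,s}$. We prefer this second route, so irreducibility of $\Higgs^{\ov\cO,s}$ comes for free.
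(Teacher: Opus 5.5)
Your architecture matches the paper's: the complex $C^\bullet_{\cO,\mathcal E}$ gives smoothness and the tangent space, and irreducibility and the closure come from the forgetful map $F$ out of the smooth $\vxi$-parabolic moduli space, whose image contains $\Higgs^{\ov\cO,s}(C,D)$. But the step you flag as the main obstacle has a genuine gap. Citing Theorem~\ref{thm:connected HiggsO} there is circular, since it is half of the statement you are proving.

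\textbf{Connectedness of the $\vxi$-parabolic space.} Your argument for $\Higgs^{\vxi\text{-par}}(C,D,\vm,\va)$ uses the action $\Phi\mapsto\lambda\Phi$. For $\vxi\neq 0$ this action does not preserve the space: it rescales the graded residues and sends $\Higgs^{\vxi\text{-par}}$ to $\Higgs^{\lambda\vxi\text{-par}}$. So there is no flow to a nilpotent cone inside a fixed fiber.

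The missing idea is to put all $\vxi$ into one family $P:\mathcal H(\vm)\to\nmd$ over the linear space cut out by \eqref{eq:Residue condition}. Its total space is smooth and $\C^*$-equivariant with positive weights on the base. It is semi-projective because the family Hitchin map is proper. Then \cite[Theorem 7.2.1]{HLRV2011Ari} identifies $H^0$ of the $\vxi$-fiber with $H^0$ of the strongly parabolic fiber $\vxi=0$. Even for $\vxi=0$, ``every point flows to a fixed point'' only shows that each component meets the fixed locus; connectedness needs a further argument, for which the paper cites Yokogawa.

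Your fallback through Simpson's correspondence is not available either. Connectedness of character varieties with arbitrary non-generic, non-semisimple prescribed monodromy is not known; the paper deduces it (Theorem~\ref{thm:connected-character-variety}) from the present theorem. Also, when residue eigenvalues have nonzero real parts, the Betti side carries nontrivial filtrations and is not a character variety.

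\textbf{Choice of weights.} The weights must be \emph{small} as in Lemma~\ref{lem:small-weights}, not merely ``adapted''. Smallness is what makes $F$ land in the semistable meromorphic locus. It is also what makes every compatible lift of a stable $(E,\Phi)$ $\va$-stable, which is exactly $\Higgs^{\ov\cO,s}(C,D)\subseteq\operatorname{Im}(F)$. Calling $F$ a resolution is not justified without the hypotheses of Theorem~\ref{thm:symplectic resolution}, though you do not need it.

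\textbf{Maximality of the leaf.} Being smooth, connected, and tangent to the characteristic distribution at full rank only shows that $\Higgs^{\cO,s}(C,D)$ is an open subset of the leaf $L$ through it; a symplectic manifold minus a point has all these properties. You must also show it is closed in $L$. The paper uses exactly the dimension count you placed in the closure step, but for this purpose. Once $\overline{\Higgs^{\cO,s}(C,D)}=\Higgs^{\ov\cO,s}(C,D)$, a limit point in $L$ would lie in some $\Higgs^{\cO',s}(C,D)$ with $\cO'\subset\ov\cO\setminus\cO$. There, by the same deformation argument for $\cO'$, the anchor has rank $\dim\Higgs^{\cO',s}(C,D)<\dim L$. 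This contradicts constancy of rank along $L$.

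\textbf{Image of the anchor.} Your description of the image only gives the inclusion into $\HH^1(C^\bullet_{\cO,\mathcal E})$. Equality needs the factorization through $\HH^1(D(C^\bullet_{\cO,\mathcal E}))$, with two inputs. First, the map from $\HH^1(D(C^\bullet_{\mathrm{mero},\mathcal E}))$ is surjective. Second, $D(C^\bullet_{\cO,\mathcal E})\to C^\bullet_{\cO,\mathcal E}$ is a quasi-isomorphism, because its cokernel $\End(E)_{p_i}/Z(A_i)\xrightarrow{[A_i,\cdot]}T_{A_i}\cO_i$ is acyclic.
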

The proof of Theorem~\ref{thm:poisson-closure} has an infinitesimal part and a global part. First, we give a self-contained deformation theoretic proof to show that the tangent space of $\Higgs^{\cO, s}(C,D)$ agrees with the characteristic distribution of the Bottacin--Markman Poisson structure. For the global part, in order to identify the closure of $\Higgs^{\cO, s}(C, D)$ and prove its connectedness, we use a comparison with the corresponding moduli space of $\vxi$-parabolic Higgs bundles.

For each $p_i\in D$, we fix a length $\ell_i$ partition $\um_i=(m_{i,1},\dots,m_{i,\ell_i})$ of $r$ and a tuple $\uxi_i=(\xi_{i,1},\dots,\xi_{i,\ell_i})\in \C^{\ell_i}$. We write $\vec m=(\um_1,\dots,\um_n)$ and $\vec\xi=(\uxi_1,\dots,\uxi_n)$. Then a parabolic Higgs bundle is a triple $(E,E_D^\bullet, \Phi)$ where $(E,\Phi)$ is a meromorphic Higgs bundle and $E_D^\bullet$ is a collection of flags $E_{p_i}^\bullet$ of length $\ell_i$ which are compatible with the restriction $\Phi|_{p_i}$. In particular, we call this triple $(E,E_D^\bullet, \Phi)$ a $\vxi$-parabolic Higgs bundle if the residue $\res_{p_i}(\Phi)$ acts by scalar multiplication by $\xi_{i,a}$ on each associated graded piece $E^{a-1}_{p_i}/E^a_{p_i}$ for $a=1, \cdots, \ell_i$; see Definition \ref{def:xi-par}.

For a system of weights $\vec\alpha$, we write $
\Higgs^{\text{par}}(C,D,\vec m,\vec\alpha)$ (resp. $\Higgs^{\vxi\text{-par}}(C,D,\vec m,\vec\alpha)$)
for the moduli space of $\vec{\alpha}$-semistable parabolic (resp. $\vxi$-parabolic) Higgs bundles of type $\vec m$. By construction, $\Higgs^{\vxi\text{-par}}(C,D,\vec m,\vec\alpha)$ is closed in $\Higgs^{\text{par}}(C,D,\vec m,\vec\alpha)$. We say $\vec\alpha$ is \textit{generic} if $\vec\alpha$-semistability implies $\vec\alpha$-stability. 
\begin{assumption}\label{assmp:genericity}
    Throughout this article, we assume the existence of generic weights $\vec\alpha$ and fix one such choice.
\end{assumption}

\begin{remark}\label{rem:assump}
    This assumption holds when the numerical data $(r,d,\vm)$ is primitive; i.e. $\mathrm{gcd}(r,d,\{m_{i,a}\})=1$. 
\end{remark}

 The moduli space $\Higgs^{\vec\xi\text{-par}}(C,D,\vec m,\vec\alpha)$ has been studied in our previous works \cite{LeeLee2024, LeeLee2025} and there are two results we would like to recall:
\begin{enumerate}[label=\textup{(\arabic*)}]
	\item The moduli space $\Higgs^{\vec\xi\text{-par}}(C,D,\vec m,\vec\alpha)$ is smooth \cite[Theorem 2.6]{LeeLee2024}.
    \item We describe the spectral correspondence for $\vxi$-parabolic Higgs bundles \cite[Theorem 1.2]{LeeLee2025}. In particular, the restricted Hitchin map $h:\Higgs^{\vec\xi\text{-par}}(C,D,\vec m,\vec\alpha) \to A$  factors through an affine subspace $B(\vm)_{\vxi}\subset A$ \cite[Corollary 1.4]{LeeLee2025}, and we write 
    \[h(\vm)_{\vxi}: \Higgs^{\vec\xi\text{-par}}(C,D,\vec m,\vec\alpha)\to B(\vm)_{\vxi}\]
    for the induced Hitchin map. The affine subspace $B(\vm)_{\vxi}$ is a linear system of curves contained in a given holomorphic symplectic surface obtained by blowing up the ruled surface $\bb{P}(K_C(D)\oplus\O_C)$. Over the locus of smooth curves in $B(\vm)_{\vxi}$, the  fibers are the corresponding Jacobian varieties. 
    
\end{enumerate} 
\noindent By the work of Logares--Martens \cite{logaresMartens}, the ambient moduli space of parabolic Higgs bundles $\Higgs^{\mathrm{par}}(C,D,\vec m,\vec\alpha)$ carries a natural Poisson
structure. We prove the following analogue of
Theorem~\ref{thm:poisson-closure} (a similar statement can also be found in \cite[Section 3.2.4]{logaresMartens} and \cite[Proposition 7.1]{BGPM2020} for real reductive groups).  
\begin{theorem}[Theorem \ref{thm:symplectic leaf xipar}]
     The moduli space $\Higgs^{\vec\xi\text{-par}}(C,D,\vec m,\vec\alpha)$ is irreducible and is a closed symplectic leaf of the Poisson variety $\Higgs^{\text{par}}(C,D,\vec m,\vec\alpha)$.
\end{theorem}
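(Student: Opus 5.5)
We will prove the theorem in three steps, in parallel with the strategy described for Theorem~\ref{thm:poisson-closure}: an infinitesimal identification of tangent spaces with the characteristic distribution, a closedness statement, and a global irreducibility statement.

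\textbf{Step 1: deformation theory.} Let $(E,E_D^\bullet,\Phi)$ be an $\vec\alpha$-stable parabolic Higgs bundle. Write $\mathcal{P}ar\End(E)$ for the sheaf of endomorphisms preserving the flags, and $\mathcal{SP}ar\End(E)$ for the strongly parabolic ones, namely those inducing zero on the graded pieces. The tangent space to $\Higgs^{\mathrm{par}}(C,D,\vec m,\vec\alpha)$ is $\HH^1$ of the complex
\[
\mathcal{P}ar\End(E)\xrightarrow{[-,\Phi]}\mathcal{P}ar\End(E)\otimes K_C(D).
\]
Under Serre duality, $\mathcal{P}ar\End(E)^\vee\otimes K_C\cong \mathcal{SP}ar\End(E)\otimes K_C(D)$. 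The Logares--Martens Poisson structure is then the map on $\HH^1$ induced by the natural morphism of complexes from this dual complex into the original one: it is the identity on the first term and the inclusion $\mathcal{SP}ar\End\otimes K_C(D)\hookrightarrow \mathcal{P}ar\End\otimes K_C(D)$ on the second. The image of this map on $\HH^1$ is the characteristic distribution.

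The tangent space to the closed subvariety $\Higgs^{\vxi\text{-par}}$ is $\HH^1$ of the complex
\[
\mathcal{P}ar\End(E)\to \mathcal{SP}ar\End(E)\otimes K_C(D)+(\text{scalars on graded pieces}).
\]
More precisely, deformations must keep the residue acting by the fixed scalars $\xi_{i,a}$ on the graded pieces. Since $[\,\cdot\,,\Phi]$ automatically lands in the strongly parabolic part at the points of $D$, this complex is quasi-isomorphic to $\mathcal{P}ar\End\to\mathcal{SP}ar\End\otimes K_C(D)$, which is exactly the source complex of the Poisson map.

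We then compare the long exact sequences in hypercohomology. Stability gives that $\HH^0$ consists of scalars, and Serre duality then gives $\HH^2$ of the dual complex. Using these, we show that the map from $\HH^1$ of the source complex to $T\Higgs^{\mathrm{par}}$ is injective and that its image is the characteristic distribution. The main point to check is that the connecting map is injective, and this is where we use $\HH^0=\C$. As a by-product, this recovers smoothness of $\Higgs^{\vxi\text{-par}}$ of dimension $\dim\HH^1$, consistent with \cite[Theorem 2.6]{LeeLee2024}, and shows that the restricted Poisson form is nondegenerate.

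\textbf{Step 2: closed and a union of leaves.} $\Higgs^{\vxi\text{-par}}$ is closed because the conditions on the graded residues are closed conditions. By Step 1 it is smooth and its tangent spaces equal the characteristic distribution. Hence each connected component is an integral submanifold of maximal dimension, and therefore an open and closed union of leaves, i.e. a leaf. Equivalently, the Casimir functions given by the characteristic polynomials of the graded residues are constant on it, and $\Higgs^{\vxi\text{-par}}$ is a union of level sets of these functions of the right dimension. It remains to prove connectedness.

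\textbf{Step 3: irreducibility, the main obstacle.} Since the space is smooth, connectedness suffices. Our first approach is through the Hitchin map $h(\vm)_{\vxi}$ onto the affine space $\bmxi$. By the spectral correspondence \cite[Theorem 1.2]{LeeLee2025}, the generic fibers are Jacobians of smooth spectral curves, which are connected. It therefore suffices to know that $h(\vm)_{\vxi}$ is proper, which follows from properness of the parabolic Hitchin map together with closedness, and flat with equidimensional fibers. Flatness follows from smoothness and equidimensionality by miracle flatness; equidimensionality is derived from Lagrangianity (Step 1 with isotropy of fibers) via an upper bound on fiber dimension coming from the $\C^*$-action and the nilpotent cone. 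Given these, every component dominates $\bmxi$, and connected generic fibers force a single component.

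The difficulty is that when the $\xi$'s are non-generic, the $\C^*$-action does not preserve the space. If that approach fails, we instead use a deformation in $\vxi$: the family over $\vxi$-space is smooth over the base, and for generic $\vxi$ it is diffeomorphic to the case with generic weights. Connectedness for generic $\vxi$ then follows from the non-abelian Hodge correspondence, or from the case $\vxi=0$, where the $\C^*$-action gives a retraction onto the connected nilpotent cone, combined with Ehresmann's theorem.
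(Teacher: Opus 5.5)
Your overall architecture matches the paper's: identify tangent spaces with the characteristic distribution, use closedness to make each connected component a leaf, then get irreducibility from connectedness plus smoothness. However, two steps do not work as written.

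\textbf{Step~1.} You misidentify the Serre dual of $C^\bullet_{\mathrm{par}}=[\PEnd\to\PEnd\otimes K_C(D)]$. Its degree-$0$ term is $(\PEnd\otimes K_C(D))^\vee\otimes K_C\cong\SPEnd$, so the dual is $[\SPEnd\to\SPEnd\otimes K_C(D)]$, and the Logares--Martens map is the inclusion in \emph{both} degrees, not the identity in degree $0$. Its source is therefore not the $\vxi$-parabolic complex $[\PEnd\to\SPEnd\otimes K_C(D)]$. That complex is self-dual and computes $T\Higgs^{\vxi\text{-par}}$, which is strictly smaller than $T^*\Higgs^{\mathrm{par}}$.

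The correct argument uses the factorization $D(C^\bullet_{\mathrm{par}})\hookrightarrow D(C^\bullet_{\vxi\text{-par}})\cong C^\bullet_{\vxi\text{-par}}\hookrightarrow C^\bullet_{\mathrm{par}}$. The first map is surjective on $\HH^1$ because its cokernel is the skyscraper $\bigoplus_i P_i/SP_i$ sitting in degree $0$. The last map is injective on $\HH^1$ because its cokernel is concentrated in degree $1$. That surjectivity is exactly what shows $\operatorname{Im}\pi^\sharp_{\mathrm{par}}$ is \emph{all} of $T\Higgs^{\vxi\text{-par}}$, and it is missing from your argument. Conversely, $\HH^0=\C$ plays no role in the injectivity.

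Two smaller points in Steps~1--2. For fixed $\vxi$ the degree-$1$ term is exactly $\SPEnd\otimes K_C(D)$; adding ``scalars on graded pieces'' lets $\vxi$ vary and does not give a quasi-isomorphic complex. Your Casimir remark is also off: prescribing the characteristic polynomials $(t-\xi_{i,a})^{m_{i,a}}$ of the graded residues is weaker than requiring them to equal $\xi_{i,a}\id$. So $\Higgs^{\vxi\text{-par}}$ is a proper closed subset of that level set, not a union of level sets.

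\textbf{Step~3, first route.} Neither route is completed. In the first, the fiber-dimension bound needed for miracle flatness is precisely what is missing. The $\C^*$-action rescales $\vxi$, so it preserves $\Higgs^{\vxi\text{-par}}$ only when $\vxi=0$, not merely for non-generic $\vxi$. Isotropy of generic fibers says nothing about the dimension of special fibers.

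The paper's fix (Lemma~\ref{l:flatness xi-par Hitchin}) is to pass to the total family $\bm h(\vm):\bm{\mathcal H}(\vm)\to\bmxi\times\mathcal N(\vm)$. This family is $\C^*$-equivariant when $\C^*$ scales both $\Phi$ and $\vxi$, and every orbit closure contains $(0,\vec 0)$. So by properness and upper semicontinuity, every fiber is bounded by the central one, whose dimension is known from the strongly parabolic case (Su--Wang--Wen).

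With that flatness in hand, your argument does close, and it is a genuine alternative to the paper's. Each connected component has open image (by flatness) and closed image (by properness), hence surjects onto $\bmxi$. A fiber over a smooth spectral curve is a connected Jacobian, so there is only one component, under the paper's nonemptiness hypothesis on that locus.

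\textbf{Step~3, second route.} Ehresmann's theorem does not apply, because $P:\bm{\mathcal H}(\vm)\to\mathcal N(\vm)$ is not proper. The paper replaces properness by semi-projectivity of $\bm{\mathcal H}(\vm)$ (Lemma~\ref{lem:semiprojective}): the fixed locus lies in the $\vxi=0$ nilpotent cone, and limits exist by properness of $\bm h(\vm)$. It then invokes the Hausel--Letellier--Rodriguez-Villegas result that the fibers of such a family have isomorphic cohomology. This still needs connectedness of the $\vxi=0$ space, which you assert but do not justify. The appeal to non-abelian Hodge theory for generic $\vxi$ is not an argument as stated.
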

Together with the spectral correspondence recalled above, the restricted Hitchin map yields the structure of an algebraically completely integrable system on $\Higgs^{\vec\xi\text{-par}}(C,D,\vec m,\vec\alpha)$. 
\begin{theorem}[Theorem \ref{thm:holosym and integrable}]\label{thm:xipar}
     Let the notation be as above. The Hitchin map $h(\vm)_{\vxi}$ forms an algebraically completely integrable system. In other words, $h(\vm)_{\vxi}$ is a proper flat Lagrangian fibration over the affine base $B(\vm)_{\vxi}$ whose generic fibers are abelian varieties. 
    
\end{theorem}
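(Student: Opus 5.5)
The plan is to verify, one at a time, the four properties defining an algebraically completely integrable system: symplectic total space, properness, flatness, and Lagrangian fibers that are generically abelian varieties. Write $M:=\Higgs^{\vxi\text{-par}}(C,D,\vm,\va)$ and $B:=\bmxi$.

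First, the symplectic structure. By Theorem~\ref{thm:symplectic leaf xipar}, $M$ is a closed symplectic leaf of the Logares--Martens Poisson variety $\Higgs^{\mathrm{par}}(C,D,\vm,\va)$. It is smooth by \cite[Theorem 2.6]{LeeLee2024}, so it carries a holomorphic symplectic form $\omega$. Its dimension can be computed by deformation theory: the tangent space at $(E,E^\bullet_D,\Phi)$ is $\HH^1$ of the two-term complex $\PEnd(E)\to \SPEnd(E)\otimes K_C(D)$, and Riemann--Roch together with $\HH^0=\HH^2=\C$ (using stability and Serre duality) gives
\[
\dim M = 2(r^2(g-1)+1)+\sum_i \dim \cO_{\vm_i},
\]
where $\cO_{\vm_i}$ is the relevant orbit of partition type. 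Separately, $\dim B$ can be read off from \cite{LeeLee2025}: it is the dimension of a linear system on the blown-up surface, whose genus computation gives the arithmetic genus $\tilde g$ of the spectral curves. The target identity is $\dim B=\tilde g=\tfrac12\dim M$.

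Second, properness and flatness. Properness of $h(\vm)_{\vxi}$ follows because the parabolic Hitchin map on $\Higgs^{\mathrm{par}}$ is proper for generic weights (via Langton-type arguments, or the semistable reduction of Yokogawa). Since $M$ is closed in $\Higgs^{\mathrm{par}}$ and maps into the closed subspace $B\subset A$, the restriction is proper. For flatness, I would use miracle flatness: $M$ is smooth and $B$ is an affine space, so it suffices to show that every fiber has pure dimension $\dim M-\dim B$. I would get this from the $\C^*$-action scaling $\Phi$ and $\vxi$ simultaneously. This action does not preserve $M$ unless $\vxi=0$, so I would instead use the family over $\lambda\vxi$, which degenerates to the nilpotent cone of the $\vxi=0$ space. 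By upper semicontinuity and $\C^*$-contraction, the fiber dimension is maximized over $0$. It is then enough to show the nilpotent cone is isotropic, hence of dimension at most $\tfrac12\dim M$; this follows from the standard Laumon/Faltings argument that the $\C^*$-fixed attracting sets are isotropic. Combined with the lower bound from dominance, fibers are equidimensional of dimension $\tfrac12\dim M$.

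Third, the Lagrangian property and the abelian fibers. Over the open locus $B^{\circ}$ of smooth spectral curves, \cite[Theorem 1.2]{LeeLee2025} identifies the fiber with a torsor over $\mathrm{Jac}(\tilde\Sigma)$, which is an abelian variety of dimension $\tilde g=\dim B$. The components of $h(\vm)_{\vxi}$ Poisson-commute on $\Higgs^{\mathrm{par}}$ (the Logares--Martens integrability of invariant functions), and therefore commute with respect to $\omega$ on the leaf $M$. So the fibers over $B^\circ$ are isotropic of half dimension, that is, Lagrangian. By flatness and the closedness of the Lagrangian condition on the reduced fibers, this extends to all fibers.

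The main obstacle I expect is the equidimensionality/flatness step, in particular controlling fibers over singular spectral curves in $B$ where the $\vxi$-constraints force the curves to pass through the blown-up points. If the $\C^*$-degeneration argument is delicate, I would first try to prove directly that $h$ is dominant, giving lower bounds on fiber dimensions. The upper bound would then come from isotropy of the fibers, shown by the Poisson-commuting Hamiltonians combined with generic smoothness on each component of a fiber.
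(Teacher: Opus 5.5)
Your proposal is correct and follows the same skeleton as the paper's proof. Both use the symplectic form coming from the leaf structure, the count $\dim M=2\dim\bmxi$ from Lemma~\ref{lem:dim} and \eqref{eq:dim bmxi}, and Jacobians of smooth spectral curves as the generic fibers. Both also get flatness by miracle flatness after degenerating along $\lambda\vxi$ to the nilpotent cone of the $\vxi=0$ space, which is essentially Lemma~\ref{l:flatness xi-par Hitchin}.

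You differ from the paper in three local inputs.

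\textbf{Properness.} You use Yokogawa's properness of the parabolic Hitchin map on the ambient $\Higgs^{\mathrm{par}}(C,D,\vm,\va)$ together with closedness of the $\vxi$-locus. The paper instead uses the closed embedding $Q_{\vxi}$ into $\mathcal M_{S_{\vxi}}$ and properness of the support map. Your route does not depend on the spectral surface. It also immediately gives properness of the family over the line $\C\cdot\vxi$, which you implicitly need for the $\C^*$-limits.

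\textbf{Central fiber.} You bound $\dim h_{\vec 0}^{-1}(0)$ by isotropy of the downward attracting sets (Laumon/Faltings). This uses that $\omega$ has weight one and that $h_{\vec 0}$ is proper, so every point of the nilpotent cone flows as $\lambda\to\infty$. The paper simply cites \cite[Theorem~6.8]{Suwangwen2022}.

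\textbf{Lagrangian property.} You restrict the Poisson-commuting Logares--Martens Hamiltonians to $M$, using $\{f|_M,g|_M\}_\omega=\{f,g\}|_M$. This identity holds because the bivector induced by $\omega$ is $\pi|_M$. You then argue at points where $h(\vm)_{\vxi}$ is submersive, which holds over generic $b$. The paper instead re-runs the spectral argument of \cite[Propositions 3.12, 3.13]{logaresMartens} with $X_s$ replaced by $\Sigma_b$. Your version is cleaner and genuinely uses the leaf structure; the paper's keeps everything on the spectral side.

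Three small remarks:
\begin{enumerate}
\item The lower bound $\dim_x h^{-1}(h(x))\geq \dim M-\dim\bmxi$ holds at every point for any morphism, so dominance is not needed for it.
\item The fallback in your last paragraph would not work. Poisson-commuting Hamiltonians can at best show that components of fibers are coisotropic, which bounds their dimension from below by $\tfrac12\dim M$, not from above. The upper bound genuinely needs the $\C^*$-isotropy argument or the cited result.
\item Extending the Lagrangian property to all fibers is not required by Definition~\ref{def:integrable system}, so that step can be dropped.
\end{enumerate}
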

The case $\vxi=0$ corresponds to strongly parabolic Higgs bundles, which has been studied by Scheinost--Schottenloher \cite{ScheinostSchotten95} and Su--Wang--Wen \cite{Suwangwen2022}. It is also generalized to the $G$-parahoric setting by Baraglia--Kamgarpour--Varma \cite{baragliaKamgarpourVarma}.

To relate $\vxi$-parabolic Higgs bundles with meromorphic Higgs bundles with  fixed residue classes, we need the linear algebra fact: For each $i$, there is a one-to-one correspondence between the set of combinatorial data  $(\um_i,\uxi_i)$ and the set of conjugacy classes $\cO_i \in \mathfrak{gl}_r$, up to permutation, in the sense that every element of the orbit $\cO_i$ has a unique
compatible flag of type $(\um_i,\uxi_i)$; see Appendix \ref{sec:local-linear-algebra} for
details. Whenever $\cO_i$ is the conjugacy class determined by $(\um_i,\uxi_i)$, we write
$\cO_i=\cO(\um_i,\uxi_i)$. We write $\cO=\cO(\vm,\vxi)=\prod_{i=1}^n\cO_i $ where
$\cO_i=\cO(\um_i,\uxi_i)$ for every $i$. In this case, we say that
$(\vm,\vxi)$ corresponds to $\cO$. For such corresponding data,
\eqref{eq:trace-compatibility} is equivalent to the following condition:
\begin{equation}\label{eq:Residue condition}
\sum_{i=1}^n\sum_{a=1}^{\ell_i} m_{i,a}\xi_{i,a}=0.
\end{equation}

On the level of the moduli spaces, we observe that there is a well-defined morphism by forgetting the flags
\[
F:\Higgs^{\vec\xi\text{-par}}(C,D,\vec m,\vec\alpha)\longrightarrow \Higgs^{\ov{\cO},ss}(C,D)
\]
for small enough weights $\vec{\alpha}$ (see Lemma \ref{lem:small-weights} and Proposition \ref{prop:well-defined-forgetful}).

\begin{proposition}[Proposition \ref{prop:F properties}]\label{prop:main} The forgetful map $F$ satisfies the following properties.
    \begin{enumerate}
        \item The image of $F$ contains $\Higgs^{\ov\cO,s}(C,D)$ and $F$ is projective over this locus.
        \item $F$ is an isomorphism over $\Higgs^{\cO,s}(C,D) \subset \Higgs^{\ov\cO,s}(C,D)$. Moreover, it is a symplectomorphism. 
        \item $F$ is compatible with Hitchin maps.
    \end{enumerate}
\end{proposition}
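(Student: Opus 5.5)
Each of the three assertions reduces to linear algebra at the marked points together with a comparison of stability conditions. Throughout, let $\vec\alpha$ be small enough that $F$ is defined (Lemma~\ref{lem:small-weights}, Proposition~\ref{prop:well-defined-forgetful}). For such weights, $\vec\alpha$-stability of $(E,E_D^\bullet,\Phi)$ implies semistability of $(E,\Phi)$, and stability of $(E,\Phi)$ implies $\vec\alpha$-stability for every choice of compatible flags. The second implication holds because the parabolic contributions are too small to reverse a strict slope inequality between $\Phi$-invariant subbundles.

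\textbf{Part (1).} Let $(E,\Phi)$ be stable with $\res_{p_i}(\Phi)\in\ov{\cO_i}$. By Appendix~\ref{sec:local-linear-algebra}, every element of $\ov{\cO(\um_i,\uxi_i)}$ admits at least one compatible flag of type $(\um_i,\uxi_i)$. This is the standard statement that the partial Springer-type map from $\{(A,F^\bullet)\}$ to $\mathfrak{gl}_r$ has image $\ov{\cO}$. Choosing such flags at every $p_i$ gives a $\vxi$-parabolic Higgs bundle. It is $\vec\alpha$-stable by the small-weights remark, so $(E,\Phi)$ lies in the image of $F$.

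For projectivity, note that $F^{-1}(\Higgs^{\ov\cO,s})$ is identified with a closed subscheme of a product of partial flag bundles over $\Higgs^{\ov\cO,s}$. It is cut out by the closed conditions ``$\Phi|_{p_i}$ preserves the flag and acts by $\xi_{i,a}$ on the graded pieces''. To make this rigorous I would use a local universal family on the stable locus, which exists étale locally. The identification is a bijection on points by the above. The harder point is that it is an isomorphism of moduli functors; for this I would check that stable objects have only scalar automorphisms, so no extra stabilizers appear.

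\textbf{Part (2).} Over $\Higgs^{\cO,s}$ the residue lies in the open orbit, and by the appendix the compatible flag of type $(\um_i,\uxi_i)$ is then unique. Hence $F$ is bijective there. To upgrade this to an isomorphism, I would compare deformation complexes. The tangent complex of the $\vxi$-parabolic moduli space maps to that of $\Higgs^{\cO,s}$, which by the infinitesimal part of Theorem~\ref{thm:poisson-closure} is the characteristic distribution. The local map $\{(A,F^\bullet)\}\to\ov{\cO}$ is an isomorphism over $\cO$, so the two complexes are quasi-isomorphic. Since both spaces are smooth, $F$ is then étale and bijective, hence an isomorphism.

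For the symplectic statement, Theorem~\ref{thm:symplectic leaf xipar} identifies the source as a symplectic leaf of the Logares--Martens structure, and the target is a leaf of the Bottacin--Markman structure. Both Poisson structures are given by the same hypercohomology pairing on the deformation complexes, twisted by the local duality $\mathfrak{gl}_r\cong\mathfrak{gl}_r^*$. The remaining step is to check that the quasi-isomorphism intertwines the two duality pairings. Locally this amounts to saying that the KKS form on $\cO_i$ pulls back to the symplectic form on the corresponding piece of $T^*$(flag variety) twisted by $\uxi_i$. I expect this compatibility of Poisson structures to be the main obstacle, and I would handle it by a Čech computation comparing the Bottacin and Logares--Martens formulas.

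\textbf{Part (3).} This is immediate: both Hitchin maps are given by the characteristic coefficients of $\Phi$, and $F$ does not change $\Phi$. In particular, $h(\vm)_{\vxi}$ is the composite of $F$ with the Hitchin map on $\Higgs^{\ov\cO,ss}$, landing in $B(\vm)_{\vxi}$.
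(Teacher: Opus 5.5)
Your arguments for (1), (3), and the bijective/étale half of (2) follow the paper's route:
\begin{itemize}
\item existence of a compatible flag from Theorem~\ref{thm:orbit-correspondence}, with stability from Lemma~\ref{lem:small-weights};
\item the fibre of $F$ as a closed incidence locus in a product of relative partial flag varieties;
\item uniqueness of the compatible flag over $\cO$;
\item the comparison map $j_x^\bullet\colon C^\bullet_{\vxi\text{-par},x}\to C^\bullet_{\cO,y}$, whose cokernel is the skyscraper complex $\bigoplus_i\bigl(\mathrm{End}(V_i)/P_i\xrightarrow{[A_i,-]}T_{A_i}\cO_i/SP_i\bigr)$.
\end{itemize}
You justify acyclicity of this cokernel by saying ``the incidence map is an isomorphism over $\cO$''. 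That is acceptable but needs more than the appendix gives, since uniqueness of flags only yields bijectivity. You must add two things. First, $\mu^{-1}(\cO)\to\cO$ is a bijective morphism of smooth varieties in characteristic $0$, hence an isomorphism by Zariski's main theorem. Second, its differential modulo $SP_i$ is exactly $-[A_i,-]$. The paper instead proves the needed linear algebra directly in Lemma~\ref{lem:local-linear-algebra-injective}. It shows $[A,P]=SP$ by a dimension count using $Z(A)\subset P$ and $\dim Z(A)=\sum_a m_a^2$, and then that $[A,\phi]\in SP$ implies $\phi\in P$.

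The genuine gap is the symplectomorphism claim. You say what must be checked but leave it as ``the main obstacle'', with a proposed \v{C}ech comparison of the Bottacin and Logares--Martens formulas and a local KKS-versus-twisted-cotangent computation. None of this is needed. Appealing to the ambient Poisson structures or to Theorem~\ref{thm:symplectic leaf xipar} is also beside the point. The forms $\omega^{\cO}$ and $\omega^{\vxi\text{-par}}$ are defined intrinsically as $\HH^1$ of the maps $\theta_{\cO,y}\colon D(C^\bullet_{\cO,y})\to C^\bullet_{\cO,y}$ and $\theta_{\vxi\text{-par},x}\colon D(C^\bullet_{\vxi\text{-par},x})\to C^\bullet_{\vxi\text{-par},x}$. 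These are inclusions, up to a sign in degree $1$.

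The missing observation is that the Serre dual $D(j_x^\bullet)$ is again a pair of inclusions. In degree $0$ it is $\mathcal Z_{\cO,y}\subset\PEnd(E)$, because the centralizer of $A_i\in\cO_i$ preserves its canonical flag. In degree $1$ it is $\End(E)\otimes K_C\subset\SPEnd(E)\otimes K_C(D)$. Hence $\theta_{\cO,y}=j_x^\bullet\circ\theta_{\vxi\text{-par},x}\circ D(j_x^\bullet)$ holds on the nose as maps of complexes. Apply $\HH^1$ and use that $\HH^1(D(j_x^\bullet))=(dF_x)^*$ under Serre duality, by naturality of the pairing. This gives $\pi^\sharp_{\cO,y}=dF_x\circ\pi^\sharp_{\vxi\text{-par},x}\circ(dF_x)^*$. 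Since $dF_x$ is an isomorphism, this is exactly $F^*\omega^{\cO}=\omega^{\vxi\text{-par}}$. The local KKS compatibility you were worried about is already built into the identification $\cK_{\cO,y}^\vee\otimes K_C=\mathcal Z_{\cO,y}$.
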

Hence, the introduction of $\vxi$-parabolic Higgs bundles serves two purposes. First, via the forgetful map $F$, the irreducibility of $\Higgs^{\vec\xi\text{-par}}(C,D,\vec m,\vec\alpha)$ implies the irreducibility of $\Higgs^{\cO,s}(C,D)$ and $ \Higgs^{\ov\cO,s}(C,D)$. Second, combining Theorem~\ref{thm:xipar} and Proposition~\ref{prop:main} justifies that $\Higgs^{\vec\xi\text{-par}}(C,D,\vec m,\vec\alpha)$ and its Hitchin map $h(\vm)_{\vxi}$ provide a partial compactification of the restricted Hitchin map on $\Higgs^{\cO,s}(C,D)$ to an algebraically completely integrable system. Moreover, under extra assumptions, the forgetful map $F$ resolves the singularities of the normalization of the closure $\Higgs^{\overline{\cO}, ss}(C,D)$.

\begin{theorem}[Theorem \ref{thm:symplectic resolution}]
Assume that $\Higgs^{\cO, s}(C,D)\neq \emptyset$ and $F$ is proper and surjective. 
        Then the normalization of $\Higgs^{\overline{\cO}, ss}(C,D)$ has symplectic singularities and the forgetful map $F$ yields a symplectic resolution. Moreover, this resolution is compatible with the Hitchin maps. 
  
\end{theorem}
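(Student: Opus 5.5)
Write $M:=\Higgs^{\vec\xi\text{-par}}(C,D,\vec m,\vec\alpha)$ and $Y:=\Higgs^{\overline{\cO}, ss}(C,D)$, and let $\nu:\widetilde Y\to Y$ be the normalization. The plan is to show that $F$ factors as $M\xrightarrow{\widetilde F}\widetilde Y\xrightarrow{\nu}Y$ with $\widetilde F$ proper and birational. Then $\widetilde F$ is a resolution by a smooth holomorphic symplectic variety whose symplectic form restricts to the one on the open locus. Symplectic singularities of $\widetilde Y$ then follow from Beauville's definition: a normal variety carrying a symplectic form on its smooth locus that extends to a holomorphic $2$-form on some resolution. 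The resolution given by $\widetilde F$ is moreover symplectic, since the extended form is the symplectic form on $M$ itself.

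\textbf{Step 1: $M$ is irreducible and maps birationally onto $Y$.} By Theorem~\ref{thm:symplectic leaf xipar}, $M$ is irreducible, and it is smooth by \cite{LeeLee2024}. By Proposition~\ref{prop:main}(2), $F$ is an isomorphism over the nonempty open set $\Higgs^{\cO,s}(C,D)$. By Theorem~\ref{thm:poisson-closure}, this set is dense in $\Higgs^{\overline{\cO},s}(C,D)$.

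\textbf{Step 2: this dense open set is dense in all of $Y$.} Since $F$ is assumed surjective and $M$ is irreducible, $Y=F(M)$ is irreducible. Hence the nonempty open subset $\Higgs^{\cO,s}(C,D)$ is dense in $Y$. It follows that $F:M\to Y$ is proper, surjective and birational, and $Y$ is irreducible with $\dim Y=\dim M$.

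\textbf{Step 3: factorization through the normalization.} Since $M$ is smooth, hence normal, and $F$ is dominant, the universal property of normalization gives the factorization $F=\nu\circ\widetilde F$. The map $\widetilde F$ is proper because $F$ is proper and $\nu$ is separated. It is birational, so $\widetilde F$ is a resolution of singularities.

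\textbf{Step 4: the symplectic form on the smooth locus of $\widetilde Y$.} Let $U\subset \widetilde Y$ be the open set over which $\widetilde F$ is an isomorphism. On $U$, transport the symplectic form $\omega_M$; on $\nu^{-1}\Higgs^{\cO,s}(C,D)$ this is the leaf symplectic form, by Proposition~\ref{prop:main}(2).

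This is the step I expect to be the main obstacle. One needs a symplectic form on the whole smooth locus $\widetilde Y^{\mathrm{reg}}$, not just on $U$. The plan is as follows.
\begin{itemize}
\item By Zariski's main theorem, $\widetilde F$ has connected fibres, and the exceptional locus maps to a closed subset $Z\subset\widetilde Y$.
\item Normality of $\widetilde Y$ gives $\operatorname{codim} Z\ge 2$.
\item A $2$-form defined on $\widetilde Y^{\mathrm{reg}}\setminus Z$ extends by Hartogs to a holomorphic $2$-form $\omega$ on $\widetilde Y^{\mathrm{reg}}$.
\item The degeneracy locus of $\omega$ is a divisor, cut out by $\omega^{\dim/2}$, a section of $K_{\widetilde Y^{\mathrm{reg}}}$. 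Since it is contained in $Z$, which has codimension $\ge 2$, it is empty. So $\omega$ is symplectic on $\widetilde Y^{\mathrm{reg}}$.
\end{itemize}
By construction $\widetilde F^*\omega=\omega_M$ on $\widetilde F^{-1}(\widetilde Y^{\mathrm{reg}})$, and $\omega_M$ is a holomorphic symplectic form defined on all of $M$. This shows simultaneously that $\widetilde Y$ has symplectic singularities and that $\widetilde F$ is a symplectic resolution.

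\textbf{Step 5: compatibility with the Hitchin maps.} The Hitchin map $Y\to A$ pulls back along $\nu$ to $\widetilde Y$. Its composite with $\widetilde F$ equals $h\circ F$, which is the Hitchin map on $M$ by Proposition~\ref{prop:main}(3). This composite factors through $\bmxi$.
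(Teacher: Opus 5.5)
Your proof is correct and follows the paper's route. You get irreducibility of $\Higgs^{\overline{\cO},ss}(C,D)$ from surjectivity, factor $F$ through the normalization as a proper birational map, and extend the symplectic form over the regular locus of the normalization via Beauville's criterion; the only variation is that you obtain nondegeneracy from the degeneracy divisor of the extended form lying in a codimension-two set, where the paper uses injectivity of the differential of $\widetilde F$, and the closedness of the extended form, which you leave implicit, follows because it is closed on a dense open set.
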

Note that when semistability coincides with stability, e.g. when $(r,d)=1$, then the properness and surjectivity required in Theorem~\ref{thm:symplectic resolution} follow from Proposition~\ref{prop:main}. We also note that symplectic resolutions of Higgs moduli spaces without poles have been studied by Kiem--Yoo \cite{KiemYoo} and Tirelli \cite{Tirelli2019SymplecticResolutionsHiggs}. In our setting, we focus primarily on singularities coming from orbit closures of the residues.

\subsection{Applications to Betti moduli spaces}

We finally explain a consequence of our connectedness theorem to the Betti side. Let
$C^\circ=C\setminus D$. A filtered local system on $C^\circ$ is a local system
$\mathbb L$ together with, for each $p_i\in D$, a decreasing left-continuous
filtration $\{\mathbb L_{\widetilde p_i}^{\beta}\}_{\beta\in\mathbb R}$
on a nearby stalk, preserved by the local monodromy around $p_i$. Fix Betti-side
local data, called a residue diagram and denoted by
$P^{\vec\beta,\vec\lambda}_{\vec m}$, consisting of filtration jumps $\vec\beta$,
local monodromy eigenvalues $\vec\lambda$, and Jordan types on the associated
graded pieces; see Section~\ref{sec:Betti} for details. We denote the corresponding
stable moduli space of filtered local systems by $\mathcal M^{s}_{\mathrm{Betti}}(P^{\vec\beta,\vec\lambda}_{\vec m})$.

By Simpson's tame non-abelian Hodge correspondence, recalled in
Theorem~\ref{thm:tame-NAHC}, this Betti moduli space is homeomorphic to the stable
moduli space of parabolic Higgs bundles with the corresponding Dolbeault-side
residue diagram $P^{\vec\alpha,\vec\xi}_{\vec m}$:
\[
\mathcal M^{s}_{\mathrm{Dol}}(P^{\vec\alpha,\vec\xi}_{\vec m})
\simeq_{\mathrm{top}}
\mathcal M^{s}_{\mathrm{Betti}}(P^{\vec\beta,\vec\lambda}_{\vec m}).
\]
In the case where the Dolbeault residue acts by a scalar on each parabolic
graded piece, the Dolbeault-side moduli space is precisely the moduli space of
$\vec\xi$-parabolic Higgs bundles studied above. Therefore, the connectedness of $\Higgs^{\vxi\text{-par}}(C,D,\vm,\va)$ (Theorem~\ref{thm:xipar})
induces the following connectedness result on the Betti side. 

\begin{theorem}[Theorem \ref{thm:connected-Betti-xi-par}]\label{thm:intro-Betti-filtered-connected}
	Let $P^{\vec\beta,\vec\lambda}_{\vec m}$ be a Betti-side residue diagram whose
	corresponding Dolbeault-side residue diagram, via Simpson's table \ref{table:simpson}, is of scalar graded
	residue data of type $(\vec m,\vec\xi)$. Suppose that a generic system of parabolic weights exists as in Assumption~\ref{assmp:genericity}. If $\mathcal M^{s}_{\mathrm{Betti}}(P^{\vec\beta,\vec\lambda}_{\vec m})$
	is nonempty, then it is connected.
\end{theorem}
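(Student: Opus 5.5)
The plan is to transport connectedness from the Dolbeault side to the Betti side through Simpson's tame non-abelian Hodge correspondence (Theorem~\ref{thm:tame-NAHC}). Since that correspondence is only a homeomorphism, it preserves connectedness but does not identify the algebraic structures. So the argument reduces to three steps: (i) identify the Dolbeault-side moduli space with $\Higgs^{\vxi\text{-par}}(C,D,\vm,\va)$ for suitable weights; (ii) show that space is connected; (iii) pull connectedness back along the homeomorphism.

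For step (i), start from the Betti residue diagram $P^{\vec\beta,\vec\lambda}_{\vm}$ and read off, via Table~\ref{table:simpson}, the Dolbeault diagram $P^{\va,\vxi}_{\vm}$.
\begin{itemize}
\item The parabolic weights $\va$ are determined by the real parts of the logarithms of the monodromy eigenvalues together with the jumps $\vec\beta$.
\item The residue eigenvalues $\vxi$ are determined by the jumps and the arguments of the eigenvalues.
\item The Jordan types are preserved.
\end{itemize}
By hypothesis the Dolbeault data is of scalar graded type $(\vm,\vxi)$. Hence $\mathcal M^s_{\mathrm{Dol}}(P^{\va,\vxi}_{\vm})$ is exactly the stable locus of $\va$-parabolic Higgs bundles of type $\vm$ whose residue acts by $\xi_{i,a}$ on each graded piece, that is, the stable part of $\Higgs^{\vxi\text{-par}}(C,D,\vm,\va)$. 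I would also check two further points here:
\begin{itemize}
\item Nonemptiness of the Betti space forces \eqref{eq:Residue condition}, through the degree/residue constraint built into Simpson's table.
\item The parabolic degree condition matches, so the numerical data fall within the setting of the earlier sections.
\end{itemize}

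Step (ii) is where I expect the main obstacle. The weights $\va$ are dictated by $\vec\beta$ and need not be generic, whereas the irreducibility result (Theorem~\ref{thm:symplectic leaf xipar}) was proved for the fixed generic weights of Assumption~\ref{assmp:genericity}. I would handle this in one of two ways.

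\emph{Option 1.} If the weights coming from the table are generic, then semistable equals stable. In that case Theorem~\ref{thm:symplectic leaf xipar} directly gives irreducibility, hence connectedness, of $\mathcal M^s_{\mathrm{Dol}}$.

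\emph{Option 2.} Otherwise, I would use a wall-crossing argument. The stable locus for non-generic $\va$ is an open subset of the moduli space for a nearby generic weight $\va'$ in an adjacent chamber, since $\va$-stability implies $\va'$-stability for $\va'$ sufficiently close to $\va$. Then:
\begin{itemize}
\item $\Higgs^{\vxi\text{-par}}(C,D,\vm,\va')$ is irreducible by Theorem~\ref{thm:symplectic leaf xipar}, which I am assuming holds for any generic weight (the proof should not depend on the particular chamber).
\item A nonempty open subset of an irreducible space is irreducible, hence connected.
\end{itemize}
The identification of $\mathcal M^s_{\mathrm{Dol}}$ with such an open subset needs care, including the convention on whether the weights are fixed. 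I would also rely on the fact that shifting weights changes Simpson's correspondence only by an elementary modification that is compatible across the table.

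For step (iii), the space $\mathcal M^s_{\mathrm{Betti}}(P^{\vec\beta,\vec\lambda}_{\vm})$ is nonempty, so its homeomorphic Dolbeault counterpart is nonempty. By step (ii) that counterpart is connected, so the Betti space is connected.
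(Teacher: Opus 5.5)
Your argument is correct and has the same skeleton as the paper's proof. Both identify $\mathcal M^s_{\mathrm{Dol}}(P^{\va,\vxi}_{\vm})$ with the stable part of $\Higgs^{\vxi\text{-par}}(C,D,\vm,\va)$, prove connectedness there, and transport it along the homeomorphism of Theorem~\ref{thm:tame-NAHC}.

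\textbf{Step (ii).} This is where you differ. The paper directly invokes Proposition~\ref{prop:connected}, which is also the real input behind the irreducibility you cite from Theorem~\ref{thm:symplectic leaf xipar} (smooth plus connected). In doing so the paper tacitly treats the weights dictated by the Betti data as the generic weights of Assumption~\ref{assmp:genericity}. Under that reading, stable equals semistable, and connectedness of the whole moduli space suffices.

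Your Option~2 removes that tacit step. The values of the stability function over all numerical types of subobjects form a discrete set, so $\va$-stability persists for all nearby $\va'$. Hence $\mathcal M^s_{\mathrm{Dol}}$ is an open subset of $\Higgs^{\vxi\text{-par}}(C,D,\vm,\va')$ for a nearby generic $\va'$. Irreducibility of the latter, not mere connectedness, then gives the claim. This is more careful than the paper and uses the hypothesis exactly as stated, namely the existence of \emph{some} generic weights. You should make explicit that generic weights are dense near $\va$. This holds when no numerical wall fills the whole weight space, for example under the primitivity condition of Remark~\ref{rem:assump}.

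\textbf{Unneeded remark.} Your comment that shifting weights changes Simpson's correspondence by an elementary modification is not needed. The perturbation happens only on the Dolbeault side, and the correspondence is applied at the original $\va$.

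\textbf{Reading of the table.} You have Table~\ref{table:simpson} backwards. The weight $\alpha$ is read off from $\arg\lambda$ alone, independently of $\beta$. Meanwhile $\operatorname{Re}\xi=-\beta/2$ comes from the jumps, and $\operatorname{Im}\xi=\log|\lambda|/(4\pi)$ comes from the moduli of the eigenvalues. This does not affect the logic of your argument.
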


A particularly important special case occurs when the Betti filtration is
trivial, i.e. when all filtration jumps are $\beta=0$. Then a filtered local
system is just an ordinary local system on $C^\circ$ with prescribed local
monodromy, which is known as a character variety. 

Let $\mathcal C=(\mathcal C_1,\ldots,\mathcal C_n)$ be a collection of conjugacy
classes in $GL_r(\C)$ satisfying the determinant condition
\[
\prod_{i=1}^n\det(\mathcal C_i)=1.
\]
Assume moreover that, for each $i$, all eigenvalues of $\mathcal C_i$ have the
same argument. Under Simpson's table \ref{table:simpson}, this is exactly the condition that the
compatible Dolbeault-side parabolic weights are constant at each puncture, say
$\underline{\alpha}_i=(\alpha_{i,1})$. We write $\chi(C^\circ,\mathcal C)$ for the
character variety of representations of $\pi_1(C^\circ)$ whose local monodromy
around $p_i$ lies in $\mathcal C_i$, and $\chi(C^\circ,\mathcal C)^{\mathrm{irr}}$
for its irreducible locus. Combining Theorem~\ref{thm:poisson-closure} with Theorem \ref{thm:tame-NAHC}, we obtain the following connectedness statement.

\begin{theorem}[Theorem \ref{thm:connected-character-variety}]\label{thm:intro-character-connected}
Let $\mathcal C=(\mathcal C_1,\ldots,\mathcal C_n)$ be $q$-indivisible such that $\prod_{i=1}^n\det(\mathcal C_i)=1.$ Assume that, for each $i$, all the eigenvalues of $\mathcal C_i$ have a common argument. Then the irreducible locus
	$\chi(C^\circ,\mathcal C)^{\operatorname{irr}}$
	is connected whenever it is nonempty. 
\end{theorem}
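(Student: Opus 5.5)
The approach is to pass through Simpson's tame non-abelian Hodge correspondence, Theorem~\ref{thm:tame-NAHC}, and reduce to Theorem~\ref{thm:poisson-closure} on the Dolbeault side. Put the trivial filtration (all jumps $\beta=0$) at each $p_i$. Then a filtered local system is an ordinary local system with local monodromy in $\mathcal C_i$. Stability for trivial filtrations is irreducibility of the underlying representation, so $\mathcal M^{s}_{\mathrm{Betti}}$ is $\chi(C^\circ,\mathcal C)^{\mathrm{irr}}$. The common-argument hypothesis means Simpson's table assigns a single Dolbeault weight $\alpha_{i,1}$ at each $p_i$. Hence the parabolic structure on the Dolbeault side is trivial: one weight, no flag constraint. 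The Dolbeault-side objects are therefore meromorphic Higgs bundles $(E,\Phi)$, with $\res_{p_i}(\Phi)$ in a conjugacy class $\cO_i$ determined by the table. The semisimple part of $\cO_i$ is read from the moduli $|\lambda|$ of the eigenvalues, and its nilpotent part from the Jordan types of $\mathcal C_i$, with the table's dictionary between Jordan blocks preserved. The parabolic degree is $\deg E+\sum_i r\alpha_{i,1}$. Choosing the weight normalization, or equivalently twisting by a line bundle, makes parabolic stability agree with ordinary slope stability of $(E,\Phi)$. The determinant condition $\prod\det\mathcal C_i=1$ translates into condition \eqref{eq:trace-compatibility} together with the appropriate degree $d$.

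Step 1 is to identify $\mathcal M^{s}_{\mathrm{Dol}}(P^{\vec\alpha,\vec\xi}_{\vec m})$ with $\Higgs^{\cO,s}(C,D)$ for this $\cO$, and to record that the correspondence is a homeomorphism between these stable loci. Step 2 invokes Theorem~\ref{thm:poisson-closure}: if $\Higgs^{\cO,s}(C,D)$ is nonempty, it is irreducible, hence connected in the analytic topology. Transporting this through the homeomorphism shows $\chi(C^\circ,\mathcal C)^{\mathrm{irr}}$ is connected whenever nonempty. Nonemptiness transfers in both directions through the bijection.

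The role of $q$-indivisibility has to be accounted for. Via the table, it should guarantee that the data $(r,d,\vm)$ admit generic weights as in Assumption~\ref{assmp:genericity}, or at least that no subbundle can have the same parabolic slope. That assumption is what the proof of Theorem~\ref{thm:poisson-closure} uses, through the comparison with $\vxi$-parabolic moduli. Alternatively, $q$-indivisibility ensures the eigenvalue data cannot split into a sub-collection whose determinants multiply to $1$. It thus rules out strictly semistable or reducible phenomena that would otherwise sit at the same stratum.

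The main obstacle I expect is matching the Jordan-type data and the stability notions precisely through Simpson's table. The residue orbit $\cO_i$ must be exactly the orbit whose $(\um_i,\uxi_i)$ is used in Theorem~\ref{thm:poisson-closure}, and the genericity or indivisibility hypothesis must produce the needed generic weight choice on the Dolbeault side. I would first verify on the table that nilpotent parts of monodromy correspond to nilpotent parts of residues with the same Jordan type when the weight is constant. I would then check that $q$-indivisibility is equivalent to the primitivity condition of Remark~\ref{rem:assump} for the induced data.
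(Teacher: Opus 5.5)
Your proposal is correct and follows essentially the same route as the paper. You take $\beta=0$ and use the common-argument hypothesis to get a single weight $\alpha_{i,1}$ per puncture, so Simpson's correspondence identifies $\chi(C^\circ,\mathcal C)^{\mathrm{irr}}$ with $\Higgs^{\cO,s}(C,D)$ for purely imaginary residue classes; you then invoke the connectedness from Theorem~\ref{thm:connected HiggsO}, with $q$-indivisibility supplying generic weights through Remark~\ref{rem:assump}, exactly as the paper does. (With a single weight per puncture, parabolic stability already coincides with slope stability, so no renormalization or twist is needed.)
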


Connectedness of $\chi(C^\circ,\mathcal C)$ for generic semisimple
$\mathcal C$, and for curves of arbitrary genus, was established by
Hausel--Letellier--Rodriguez-Villegas \cite{HauselRodriguez2013}.
Theorem~\ref{thm:intro-character-connected} extends this picture in some
directions by allowing the prescribed monodromy classes $\mathcal C_i$ to be
nongeneric, under the common-argument assumption on the eigenvalues. In genus
zero, related connectedness results for certain nongeneric local monodromy data
were obtained by Shu \cite{Shu2025DSP} via multiplicative quiver varieties.

	Finally, it is natural to ask whether the symplectic resolution constructed on the
	Dolbeault side for the normalization of $\Higgs^{\overline{\cO},ss}(C,D)$ has a
	Betti-side counterpart under the tame non-abelian Hodge correspondence. A Springer-type resolution of the character varieties with fixed $GL_r(\C)$-conjugacy classes at the punctures, obtained by incorporating flags, already appears in the work of Letellier \cite{Letellier2013CharacterClosures}. This question is related to Simpson's isosingularity philosophy \cite{simpsonmoduli, Tirelli2019SymplecticResolutionsHiggs} and
	to the Betti-side symplectic resolutions studied by Schedler--Tirelli \cite{SchedlerTirelli2022}.

\section{Poisson geometry}
In this article, we work over the field of complex numbers $\C$. We recall some standard facts on holomorphic Poisson structures. Let $X$ be a smooth complex variety.

\begin{definition}
A holomorphic Poisson structure on $X$ is a bivector
\[
\pi\in H^0(X,\wedge^2T_X)
\]
such that $[\pi,\pi]=0$, where $[-,-]$ denotes the Nijenhuis--Schouten bracket. Equivalently, $\pi$ defines a bracket $\{-,-\}$ on $\mathcal O_X$ by $\{f,g\}:=\pi(df,dg)$, and this bracket is skew-symmetric, satisfies the Leibniz rule, and satisfies the Jacobi identity. A smooth variety equipped with a holomorphic Poisson structure is called a holomorphic Poisson variety.
\end{definition}

The bivector $\pi$ induces an $\mathcal O_X$-linear map
\[
\pi^\sharp:T_X^*\longrightarrow T_X,
\qquad
\alpha\longmapsto \pi(\alpha,-).
\]
The image of $\pi^\sharp$ is called the characteristic distribution of the Poisson structure.

A holomorphic symplectic variety is the nondegenerate case of a Poisson variety. Indeed, suppose that $\pi^\sharp$ is an isomorphism. Let $\omega^\flat:T_X\to T_X^*$ be its inverse, and let $\omega\in H^0(X,\Omega_X^2)$ be the corresponding nondegenerate skew-symmetric two-form. The closedness of $\omega$ is equivalent to the Poisson condition for $\pi$:
\[
d\omega=0
\qquad\Longleftrightarrow\qquad
[\pi,\pi]=0.
\]
Indeed, let $\alpha,\beta,\gamma$ be local one-forms, and set $X=\pi^\sharp(\alpha)$, $Y=\pi^\sharp(\beta)$, and $Z=\pi^\sharp(\gamma)$. With the standard convention for the Nijenhuis--Schouten bracket, one has
\[
\frac{1}{2}[\pi,\pi](\alpha,\beta,\gamma)
=
d\omega(X,Y,Z),
\]
up to sign convention. Hence, $\omega\in H^0(X,\Omega_X^2)$ is a nondegenerate closed holomorphic two-form and this is called a holomorphic symplectic form.

\begin{definition}
A symplectic leaf of a holomorphic Poisson variety $(X,\pi)$ is a maximal connected locally closed smooth subvariety tangent to the characteristic distribution. Equivalently, it is a connected subvariety $L\subset X$ such that $T_xL=\operatorname{Im}(\pi_x^\sharp)$
for every $x\in L$, and which is maximal with this property.
\end{definition}

On each symplectic leaf $L$, the restriction of $\pi$ is nondegenerate as a bivector on $L$. Its inverse is a holomorphic symplectic form on $L$. Thus a Poisson variety is foliated by symplectic leaves, whose dimensions may vary according to the rank of $\pi$.

We will use the following criterion for restricting a Poisson bivector to a smooth subvariety.

\begin{definition}
Let $(X,\pi)$ be a holomorphic Poisson variety and let $Y\subset X$ be a smooth subvariety. We say that $\pi$ is tangent to $Y$ if, for every $y\in Y$, the image of
\[
\pi_y^\sharp:T_y^*X\to T_yX
\]
is contained in $T_yY$. Equivalently, the restriction $\pi|_Y$, originally a section of $\wedge^2T_X|_Y$, actually lies in $H^0(Y,\wedge^2T_Y)$.
\end{definition}

\begin{prop}\label{prop:symp leaf closed}
Let $(X,\pi)$ be a holomorphic Poisson variety, and let $Y\subset X$ be a smooth subvariety. Suppose that along $Y$ the map $\pi^\sharp$ factors as
\[
T_X^*|_Y
\longrightarrow
T_Y^*
\xrightarrow{\ \sigma\ }
T_Y
\longrightarrow
T_X|_Y,
\]
where the first map is the natural restriction, the last map is induced by the inclusion $Y\subset X$, and $\sigma:T_Y^*\to T_Y$ is an isomorphism. Then the two-form $\omega_Y$ induced by the inverse $\sigma^{-1}$ is closed. Hence $(Y,\omega_Y)$ is a holomorphic symplectic variety.
\end{prop}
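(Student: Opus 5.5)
The plan is to show that the bivector $\pi|_Y$ is a genuine Poisson bivector on $Y$, and then to apply the equivalence $d\omega=0 \Leftrightarrow [\pi,\pi]=0$ recalled above.

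First I identify $\sigma$ with a bivector on $Y$. The factorization says that $\pi_y^\sharp$ kills the conormal space $N_y^*Y=\ker(T_y^*X\to T_y^*Y)$. It also says that the image of $\pi_y^\sharp$ lies in $T_yY$. Hence $\pi_y$, viewed as a skew form on $T_y^*X$, descends to a skew form $\pi_Y$ on $T_y^*Y$ with $\pi_Y^\sharp=\sigma$. Skew-symmetry of $\pi_Y$ is inherited from $\pi$. The descent is holomorphic because the restriction $T_X^*|_Y\to T_Y^*$ is a surjection of vector bundles, so $\pi_Y\in H^0(Y,\wedge^2T_Y)$. Equivalently, $\pi$ is tangent to $Y$ in the sense of the preceding definition, and $\pi_Y$ is the restriction.

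Next I show $[\pi_Y,\pi_Y]=0$, working in terms of brackets. Choose local coordinates on $X$ near $y\in Y$ in which $Y=\{x_{k+1}=\dots=x_n=0\}$. Local functions $\bar f,\bar g$ on $Y$ lift to local functions $f,g$ on $X$, and I define $\{\bar f,\bar g\}_Y:=\{f,g\}|_Y$. This is independent of the lifts: changing $f$ by an element $h$ of the ideal $I_Y$ changes $\{f,g\}|_Y$ by $\pi(dh,dg)|_Y$. Since $dh|_Y$ is a section of $N^*Y$ and $\pi^\sharp$ kills $N^*Y$, this term vanishes on $Y$. So $I_Y$ is a Poisson ideal ($\{I_Y,\mathcal O_X\}\subset I_Y$), and $\mathcal O_Y=\mathcal O_X/I_Y$ inherits the bracket, which is exactly $\pi_Y(d\bar f,d\bar g)$. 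The Jacobi identity for $\{-,-\}_Y$ follows by restricting the Jacobi identity on $X$ to $Y$, using that the bracket of lifts is a lift of the bracket. Hence $[\pi_Y,\pi_Y]=0$.

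Finally, $\pi_Y^\sharp=\sigma$ is an isomorphism, so by the discussion preceding the definition of symplectic leaves, $\omega_Y$ defined by $\omega_Y^\flat=\sigma^{-1}$ is a nondegenerate holomorphic $2$-form with $d\omega_Y=0$. Thus $(Y,\omega_Y)$ is a holomorphic symplectic variety.

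I expect the main obstacle to be the Poisson-ideal step, namely checking carefully that restriction is well defined and that Jacobi descends. This is a local computation, and the vanishing of $\pi^\sharp$ on the conormal bundle handles it.
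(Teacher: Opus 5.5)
Your proposal is correct and follows the same route as the paper. Both arguments restrict $\pi$ to a bivector $\pi_Y$ with $\pi_Y^\sharp=\sigma$, deduce $[\pi_Y,\pi_Y]=0$, and then invoke the equivalence $d\omega=0\Leftrightarrow[\pi,\pi]=0$ for nondegenerate bivectors. The only difference is that the paper simply asserts that the Schouten bracket is compatible with restriction to $Y$, whereas you prove this step: $I_Y$ is a Poisson ideal, and the Jacobi identity descends to $\mathcal O_Y=\mathcal O_X/I_Y$.
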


\begin{proof}
The factorization implies that $\pi$ is tangent to $Y$. Therefore $\pi$ restricts to a bivector $\pi_Y\in H^0(Y,\wedge^2T_Y)$, and the induced map $\pi_Y^\sharp:T_Y^*\to T_Y$ is precisely $\sigma$. Hence $\pi_Y$ is nondegenerate and we have the induced holomorphic two form. 

Since $\pi$ is Poisson, $[\pi,\pi]=0$. The Nijenhuis--Schouten bracket is compatible with restriction of tangent bivectors to a smooth subvariety, so $[\pi_Y,\pi_Y]=0$. By the equivalence above for nondegenerate bivectors, this is equivalent to $d\omega_Y=0$.
\end{proof}

\begin{example}\label{ex:GLn-linear algebra}
Let $G$ be a complex reductive group with Lie algebra $\mathfrak g$. The dual vector space $\mathfrak g^*$ carries the Lie--Poisson structure, whose symplectic leaves are the coadjoint orbits \cite[Section 1.3]{ChrissGinzburg1997}.

For $G=GL_r(\C)$, we identify its Lie algebra $\mathfrak{gl}_r$ with $\mathfrak{gl}_r^*$ by the nondegenerate trace pairing
\[
\mathfrak{gl}_r\times \mathfrak{gl}_r\longrightarrow \mathbb C,
\qquad
(A,B)\longmapsto \operatorname{tr}(AB).
\]
 Under this identification, the coadjoint action of $GL_r$ on $\mathfrak{gl}_r^*$ becomes the usual conjugation action on $\mathfrak{gl}_r$. Hence the coadjoint orbits are precisely the conjugacy classes in $\mathfrak{gl}_r$.

If $\mathcal O\subset\mathfrak{gl}_r$ is the conjugacy class of $A$, then its tangent space at $A$ is $T_A\mathcal O=[A,\mathfrak{gl}_r]$. Indeed, differentiating the conjugation action $g\cdot A=gAg^{-1}$ at the identity gives $X\mapsto [X,A]$, which has the same image as $X\mapsto [A,X]$ up to sign. We write $\omega^{\cO}$ for the induced symplectic form on $\cO$.  If tangent vectors are written as $[X,A]$ and $[Y,A]$, then $\omega_A^{\cO}([X,A],[Y,A])
=
\operatorname{tr}(A[X,Y])$. 
For example, the trace-orthogonal complement of $T_A\mathcal O=[A,\mathfrak{gl}_r]$ is the centralizer $Z_{\mathfrak{gl}_r}(A)$, since $\operatorname{tr}(B[A,X])=
\operatorname{tr}([B,A]X)$. 
Thus $B$ annihilates $T_A\mathcal O$ under the trace pairing if and only if $[B,A]=0$.
\end{example}

\section{Meromorphic Higgs bundles}\label{sec:stacky-defs}

We now introduce one of the main objects of this paper: the moduli space of meromorphic Higgs bundles with fixed residue conjugacy classes. For completeness, we begin in Section~\ref{sec: construction mero moduli} by giving algebraic definitions of the relevant moduli spaces. We then discuss the Poisson geometry of these moduli spaces by studying infinitesimal deformations in Section~\ref{sec:mero deform}. 

\subsection{Nitsure's construction and residue conditions}\label{sec: construction mero moduli}

We recall the GIT construction of the moduli space of $K_C(D)$-twisted Higgs bundles following Nitsure \cite{Nitsure1991curve}.

For $N\gg 0$, every semistable $K_C(D)$-twisted Higgs bundle $(E,\Phi)$ of rank $r$ and degree $d$ satisfies $H^1(C,E(N))=0$, and $E(N)$ is globally generated. Hence
$h^0(C,E(N))=\chi(E(N))=:P(N)$
is independent of $(E,\Phi)$. Fix a vector space $V$ of dimension $P(N)$.

Let $Q$ be the relevant open subscheme of the Quot scheme parametrizing quotients
$q:V\otimes\cO_C(-N)\twoheadrightarrow E$
such that $E$ has rank $r$ and degree $d$, and such that the induced map
$V\xrightarrow{\sim}H^0(C,E(N))$ is an isomorphism. Let
$V\otimes\cO_{C\times Q}(-N)\twoheadrightarrow\mathcal E$
be the universal quotient. Adding the Higgs field gives a parameter scheme $R$ over $Q$ whose points consist of a quotient
$q:V\otimes\cO_C(-N)\twoheadrightarrow E$
together with a Higgs field
$\Phi:E\to E\otimes K_C(D)$.
The group $G:=\GL(V)$ acts on $R$ by changing the framing.  Since scalar matrices act trivially on the corresponding Higgs bundle, one may equivalently work with $\mathrm{PGL}(V)$.

Nitsure constructs a $G$-linearization such that the GIT-semistable points are precisely the slope-semistable Higgs bundles. Let $R^{ss}\subset R$ and $R^s\subset R^{ss}$ denote the semistable and stable loci. The coarse moduli space of semistable meromorphic Higgs bundles is
\[
\Higgs^{ss}(C,D):=R^{ss}//G.
\]
Its closed points correspond to S-equivalence classes of semistable Higgs bundles. The stable locus is the geometric quotient
\[
\Higgs^s(C,D):=R^s/G\subset\Higgs^{ss}(C,D),
\]
and its points correspond to isomorphism classes of stable meromorphic Higgs bundles. 

\begin{rem}As $\Higgs^{ss}(C,D)$ and $\Higgs^s(C,D)$ may not be connected, we follow the convention of \cite{Bottacin1995symplectic,markmanspectral} and restrict our focus to the connected component of the semistable locus containing $(E, \Phi)$ where the underlying $E$ is stable. By an abuse of notation, we will retain the notation $\Higgs^{ss}(C, D)$ and $\Higgs^{s}(C, D)$ for this component for the remainder of this paper. 
\end{rem}

We now impose residue conditions. Let $(\mathcal{E}, \Phi_{\mathrm{univ}})$ be a universal family of Higgs bundles on $C\times R$. A universal Higgs field $\Phi_{\mathrm{univ}}$ on $R$ has residues along the marked points. For each $p_i\in D$, restriction to $\{p_i\}\times R$ gives a relative endomorphism
\[
\res_{p_i}(\Phi_{\mathrm{univ}})\in \End(\mathcal E_{p_i}).
\]
where $\mathcal{E}_{p_i}$ is the restriction of $\mathcal{E}$ to $\{p_i\} \times R$. 
Equivalently, the residues define a $G$-equivariant section of the relative endomorphism bundle
\[
\res_D:R\longrightarrow \bigoplus_{i=1}^n\End(\mathcal E_{p_i}),
\]
where $G$ acts by conjugation on the fibers. On the other hand, let $P_i:=\Fr(\mathcal E_{p_i})\to R$
be the frame bundle of $\mathcal E_{p_i}$. Thus $P_i$ is a principal
$\GL_r$-bundle, and we have a natural identification $\End(\mathcal E_{p_i})
\cong
P_i\times^{\GL_r}\mathfrak{gl}_r$,
where $\GL_r$ acts on $\mathfrak{gl}_r$ by conjugation. 

Now let $\cO=\cO_1\times\cdots\times\cO_n$ be a product of adjoint orbits, and let
$\ov\cO=\ov\cO_1\times\cdots\times\ov\cO_n$
be the corresponding product of orbit closures satisfying the condition \eqref{eq:trace-compatibility}. Then we set 
\[
\begin{aligned}
    {\cO}_i(\mathcal E_{p_i}) :=
P_i\times^{\GL_r}{\cO}_i
\subset
\End(\mathcal E_{p_i}),\qquad 
\ov{\cO}_i(\mathcal E_{p_i}) :=
P_i\times^{\GL_r}\ov{\cO}_i
\subset
\End(\mathcal E_{p_i}). 
\end{aligned}
\]
\noindent Define
\[
\begin{aligned}
R^{\cO,ss}:=R^{ss}\cap\res_D^{-1}\left(\prod_{i=1}^n\cO_i(\mathcal{E}_{p_i})\right), &
\qquad
R^{\cO,s}:=R^s\cap\res_D^{-1}\left(\prod_{i=1}^n\cO_i(\mathcal{E}_{p_i})\right),\\
R^{\ov\cO,ss}:=R^{ss}\cap\res_D^{-1}\left(\prod_{i=1}^n\ov{\cO}_i(\mathcal{E}_{p_i})\right), &
\qquad
R^{\ov\cO,s}:=R^s\cap\res_D^{-1}\left(\prod_{i=1}^n\ov{\cO}_i(\mathcal{E}_{p_i})\right).
\end{aligned}
\]
\noindent where $\prod_{i=1}^n \cO_i(\mathcal E_{p_i})$ is regarded as a relative
locally closed subset of $\bigoplus_{i=1}^n \End(\mathcal E_{p_i})$ over $R$. Since $\cO$ and $\ov\cO$ are invariant under conjugation, these are $G$-invariant subschemes. We define
\[
\begin{aligned}
    \Higgs^{\cO,ss}(C,D):=R^{\cO,ss}//G, &
\qquad
\Higgs^{\cO,s}(C,D):=R^{\cO,s}/G, \\
\Higgs^{\ov\cO,ss}(C,D):=R^{\ov\cO,ss}//G, &
\qquad
\Higgs^{\ov\cO,s}(C,D):=R^{\ov\cO,s}/G.
\end{aligned}
\]

We record the basic topological properties of these moduli spaces.

\begin{lemma}\label{lem:residue-loci-closed-locally-closed}
Let the notation be as above. 
\begin{enumerate}
    \item $\Higgs^{\ov\cO,ss}(C,D)\subset\Higgs^{ss}(C,D)$ and $\Higgs^{\ov\cO,s}(C,D)\subset\Higgs^s(C,D)$ are closed.
    \item $\Higgs^{\ov\cO,s}(C,D)\subset\Higgs^{\ov\cO,ss}(C,D)$ and $\Higgs^{\cO,s}(C,D)\subset\Higgs^{\cO,ss}(C,D)$ are open.
    \item $\Higgs^{\cO,s}(C,D)\subset\Higgs^{\ov\cO,s}(C,D)$ is open.
\end{enumerate}
\end{lemma}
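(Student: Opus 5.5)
The plan is to prove each statement first on the parameter scheme $R$, and then push it down to the quotients. Everything rests on two facts. First, $\res_D$ is a $G$-equivariant section of the vector bundle $\bigoplus_i\End(\mathcal E_{p_i})$. Second, $\prod_i\ov\cO_i(\mathcal E_{p_i})$ is a closed, conjugation-invariant subset of the total space, and $\prod_i\cO_i(\mathcal E_{p_i})$ is open in it. The second fact holds because $\ov\cO_i$ is closed in $\mathfrak{gl}_r$ and $\cO_i$ is open in $\ov\cO_i$ (an orbit is open in its closure). Both properties survive the associated-bundle construction $P_i\times^{\GL_r}(-)$, since they can be checked étale locally on $R$ where $P_i$ is trivial.

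Pulling back along $\res_D$, I get that $R^{\ov\cO}:=\res_D^{-1}(\prod\ov\cO_i(\mathcal E_{p_i}))$ is a closed $G$-invariant subscheme of $R$. Likewise $R^{\cO}$ is open in $R^{\ov\cO}$. I then intersect with $R^{ss}$ and with $R^s$.

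For (1), the semistable case uses the standard GIT fact that a closed $G$-invariant subscheme $Z\subset R^{ss}$ has good quotient $Z//G$ mapping isomorphically onto a closed subscheme of $R^{ss}//G$. This follows because the quotient map $\pi:R^{ss}\to R^{ss}//G$ sends closed invariant sets to closed sets, and $\mathcal O^G$ surjects onto invariant functions on $Z$ by reductivity, the Reynolds operator argument. The stable case is simpler. $R^s\to\Higgs^s(C,D)$ is a geometric quotient, so the image of a closed invariant saturated set is closed. Any invariant set in $R^s$ is saturated, because every fiber is a single orbit.

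For (3), $R^{\cO,s}$ is open, invariant and saturated in $R^{\ov\cO,s}$. A geometric quotient is a topological quotient (universally submersive), so the image of $R^{\cO,s}$ is open in $\Higgs^{\ov\cO,s}(C,D)$.

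For (2), recall that $R^s\subset R^{ss}$ is open and that $\Higgs^s(C,D)=\pi(R^s)$ is open in $\Higgs^{ss}(C,D)$, with $\pi^{-1}(\pi(R^s))=R^s$. The last equality holds because stable points have closed orbits of maximal dimension in $R^{ss}$; this is the standard statement that the stable locus is saturated. Intersecting with the closed invariant subscheme $R^{\ov\cO,ss}$, whose quotient is closed by (1), gives openness of $\Higgs^{\ov\cO,s}$ in $\Higgs^{\ov\cO,ss}$.

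The statement for $\cO$ needs more care, because $R^{\cO,ss}$ is only locally closed. Its good quotient $\Higgs^{\cO,ss}(C,D)$ must therefore be interpreted as the image of $R^{\cO,ss}$ in the quotient of its closure, or directly via GIT on $R^{\cO,ss}$. I would argue that $R^{\cO,s}=R^{\cO,ss}\cap\pi^{-1}(\Higgs^s)$ is open and saturated for the quotient map restricted to $R^{\cO,ss}$. Its image is then the complement of the image of the closed invariant set $R^{\cO,ss}\setminus R^{\cO,s}$, and that image is closed because good quotients map closed invariant sets to closed sets.

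I expect the main obstacle to be making sense of the good quotient for the locally closed set $R^{\cO,ss}$, where S-equivalence limits may leave the orbit $\cO$. The cleanest approach, which I would try first, is to define $\Higgs^{\cO,ss}$ as the image in $\Higgs^{\ov\cO,ss}$. The saturation property of the stable locus then gives everything needed.
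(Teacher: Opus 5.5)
Your proposal is correct and follows essentially the same route as the paper: pull the closed condition $\ov\cO$ and the open-in-closure condition $\cO$ back to $R$, use that a good quotient sends closed invariant sets to closed sets, use saturation of the stable locus to get $\Higgs^{\ov\cO,s}=\Higgs^{\ov\cO,ss}\cap\Higgs^s$ and $\Higgs^{\cO,s}=\Higgs^{\cO,ss}\cap\Higgs^s$, and descend (3) through the geometric quotient. Your extra care about the meaning of $\Higgs^{\cO,ss}$ is warranted; the paper implicitly treats it as the image of $R^{\cO,ss}$ in $\Higgs^{ss}$ (cf. Remark~\ref{rem:semistable-residue-condition}), and under that reading the saturation argument alone gives the openness, so you do not need to invoke the good-quotient property for $R^{\cO,ss}\to\Higgs^{\cO,ss}$, which that map need not have.
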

\begin{proof}
	Since $\ov\cO$ is closed in $\prod_{i=1}^n \mathfrak{gl}_r$, the space  $R^{\ov\cO,ss}$ is a closed $G$-invariant subscheme of
	$R^{ss}$. Since $\pi:R^{ss}\to \Higgs^{ss}(C,D)$ is a good quotient, the image
	of a closed $G$-invariant subset is closed. Thus
	$\Higgs^{\ov\cO,ss}(C,D)$ is closed in $\Higgs^{ss}(C,D)$.

	Intersecting with the open stable locus gives
	$\Higgs^{\ov\cO,s}(C,D)
	=\Higgs^{\ov\cO,ss}(C,D)\cap \Higgs^s(C,D)$.
	Therefore $\Higgs^{\ov\cO,s}(C,D)$ is closed in $\Higgs^s(C,D)$ and open in
	$\Higgs^{\ov\cO,ss}(C,D)$. This proves (1) and the first statement of (2).

	The same intersection argument gives
	$\Higgs^{\cO,s}(C,D)
	=\Higgs^{\cO,ss}(C,D)\cap \Higgs^s(C,D)$,
	so $\Higgs^{\cO,s}(C,D)$ is open in $\Higgs^{\cO,ss}(C,D)$. This proves the
	second statement of (2).

	Finally, each orbit $\cO_i$ is open in its closure $\ov\cO_i$. Hence
	$\cO=\prod_i\cO_i$ is open in $\ov\cO=\prod_i\ov\cO_i$. Therefore
	$R^{\cO,s}$ is open in $R^{\ov\cO,s}$. Since the stable quotient is geometric,
	this descends to an open inclusion
	$\Higgs^{\cO,s}(C,D)\subset \Higgs^{\ov\cO,s}(C,D)$.
	This proves (3).
\end{proof}

\begin{remark}\label{rem:semistable-residue-condition}
	Although $R^{\cO,ss}$ is open in $R^{\ov\cO,ss}$, it is generally not saturated
	with respect to the good quotient map
	$R^{\ov\cO,ss}\to R^{\ov\cO,ss}//G$. Indeed, passing to an $S$-equivalent
	polystable representative may move the residue from $\cO$ to the boundary
	$\ov\cO\setminus\cO$. Thus we do not claim that $\Higgs^{\cO,ss}(C,D)$ is
	open in $\Higgs^{\ov\cO,ss}(C,D)$.
\end{remark}

There is a well-defined Hitchin map \[h:\Higgs^{ss}(C,D) \to A:=\bigoplus_{j=1}^rH^0(C,K_C(D)^{\otimes j})\]
defined by the coefficients of the characteristic polynomial of the Higgs field $\Phi$. We will revisit how we describe the base of the restriction over $\Higgs^{\ov\cO,ss}$ in Section~\ref{sec:integrable systems}. 

We next explain why the orbit-closure loci are singular in general. We work on the open locus considered by Markman \cite{markmanspectral}, where the underlying vector bundle is stable and a level structure is chosen along $D$.

Let $\mathcal U_D$ denote the moduli space of stable vector bundles equipped with a $D$-level structure. A point of $\mathcal U_D$ is a pair $(E,\eta)$, where $E$ is a stable vector bundle of rank $r$ and degree $d$, and $\eta:E|_D\xrightarrow{\sim}V_D$ is a framing along $D$. By Markman's description, $T^*\mathcal U_D$ parametrizes triples $(E,\Phi,\eta)$, where $(E,\eta)\in\mathcal U_D$ and
$\Phi\in H^0(C,\End(E)\otimes K_C(D))$.

Using the level structure, the residues of $\Phi$ become matrices. Thus we obtain the residue map $\mu:T^*\mathcal U_D\longrightarrow \mathfrak R$,
where
\[
\mathfrak R:=
\left\{
(A_i)_i\in\bigoplus_i\mathfrak{gl}_r
\ \middle|\
\sum_i\operatorname{tr}(A_i)=0
\right\}.
\]
Here the subscript $(-)_i$ denotes the $i$-th component of the direct sum. Explicitly,
$\mu(E,\Phi,\eta)=
\bigl(\eta_i\circ \res_{p_i}(\Phi)\circ\eta_i^{-1}\bigr)_i.$

\begin{lemma}\label{lem:markman-residue-submersion}
	The residue map $\mu:T^*\mathcal U_D\to\mathfrak R$ is smooth.
\end{lemma}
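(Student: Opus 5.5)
We need to show that the residue map $\mu:T^*\mathcal U_D\to\mathfrak R$ is smooth. Both spaces are smooth: $\mathcal U_D$ is smooth because stable bundles are simple and unobstructed on a curve, and adding a level structure gives a principal bundle over the stable locus modulo scalars. Smoothness of $\mu$ is therefore equivalent to surjectivity of $d\mu$ at every point. The plan is to compute $d\mu$ through the deformation theory of triples $(E,\Phi,\eta)$. Then we show that its cokernel injects into a space that stability forces to be zero.

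\textbf{Step 1: the derivative of the residue map.}
Fix a point $(E,\Phi,\eta)$. Tangent vectors to the fibre of $T^*\mathcal U_D\to\mathcal U_D$ are elements $\dot\Phi\in H^0(C,\End(E)\otimes K_C(D))$. On these, $d\mu$ is simply
\[
\dot\Phi\mapsto\bigl(\eta_i\circ\res_{p_i}(\dot\Phi)\circ\eta_i^{-1}\bigr)_i .
\]
It therefore suffices to show that the residue map
\[
H^0(C,\End(E)\otimes K_C(D))\to \bigoplus_i \End(E_{p_i})
\]
has image equal to the trace-zero-sum subspace $\mathfrak R$. The image lies in $\mathfrak R$ by the residue theorem.

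\textbf{Step 2: identifying the cokernel.}
Consider the short exact sequence
\[
0\to \End(E)\otimes K_C\to \End(E)\otimes K_C(D)\to \bigoplus_i \End(E_{p_i})\to 0 .
\]
Its long exact sequence shows that the cokernel of the residue map injects into $H^1(C,\End(E)\otimes K_C)$. By Serre duality this space is dual to $H^0(C,\End(E))$, which equals $\C$ because $E$ is stable, hence simple.

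The connecting map is dual to the natural map
\[
H^0(C,\End(E))\to\bigoplus_i\End(E_{p_i})^*\cong\bigoplus_i\End(E_{p_i}),
\]
given by restriction, with the dual taken under the trace pairing. Hence the image of the residue map is the annihilator of the scalars $(\lambda\,\id)_i$, namely $\{(A_i):\sum_i\tr A_i=0\}$, which is exactly $\mathfrak R$. So $d\mu$ is surjective even when restricted to vertical directions.

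\textbf{Obstacle and wrap-up.}
The main point needing care is the duality identification in Step 2. We must check that the connecting homomorphism $\bigoplus_i\End(E_{p_i})\to H^1(\End(E)\otimes K_C)$ is Serre-dual to restriction of global endomorphisms, with compatible signs and trace pairings. This is a standard residue-pairing computation.

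A single point of care remains: $T^*\mathcal U_D$ in Markman's description may differ from the literal cotangent bundle by the scalar automorphisms acting on the level structure. Since we only use the vertical directions at a point, this does not affect surjectivity. Surjectivity of $d\mu$ between smooth varieties then gives smoothness of $\mu$.
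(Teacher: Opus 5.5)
Your proposal is correct and matches the paper's proof: both reduce to surjectivity of $d\mu$ in the Higgs-field directions $H^0(C,\End(E)\otimes K_C(D))$, then identify the image of the residue map via the residue exact sequence, Serre duality (the connecting map being dual to evaluation of global endomorphisms), and $H^0(C,\End(E))=\C\cdot\id_E$. Your extra remarks on the smoothness of $\mathcal U_D$ and $T^*\mathcal U_D$, which the paper leaves implicit, are harmless additions.
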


\begin{proof}
	It is enough to show that $d\mu$ is surjective at every point. Let
	$y=(E,\Phi,\eta)$. Since $E$ is stable, $H^0(C,\End(E))=\C\cdot\id_E$. Consider the residue map for Higgs-field variations
	\[
	\res_D:
	H^0(C,\End(E)\otimes K_C(D))
	\longrightarrow
	\bigoplus_i\End(E_{p_i}).
	\]
	Its image is contained in the  residue space
	\[
	\mathfrak R_E:=
	\left\{
	(B_i)_i\in\bigoplus_i\End(E_{p_i})
	\ \middle|\
	\sum_i\tr(B_i)=0
	\right\}.
	\]
	We claim that $\res_D$ is surjective onto $\mathfrak R_E$. Indeed, the residue sequence
	\[
	0\to\End(E)\otimes K_C
	\to\End(E)\otimes K_C(D)
	\to \bigoplus_i\End(E_{p_i}) \to 0
	\]
	gives a connecting homomorphism $\delta:\bigoplus_i\End(E_{p_i})
	\longrightarrow H^1(C,\End(E)\otimes K_C)$,
	and $\operatorname{coker}(\res_D)$ identifies with $\operatorname{Im}(\delta)$.
	By Serre duality and the trace pairing, the dual of $\delta$ is the evaluation
	map
	\[
	H^0(C,\End(E))
	\longrightarrow
	\left(\bigoplus_i\End(E_{p_i})\right)^*,
	\qquad
	s\longmapsto \bigl((B_i)_i\mapsto \sum_i\tr(s(p_i)B_i)\bigr).
	\]
    where $(-)^*$ denotes the linear dual. By the assumption that $E$ is stable, $H^0(C,\End(E))=\C\cdot\id_E$, hence the annihilator of
	$\operatorname{Im}(\res_D)=\ker(\delta)$ is spanned by the functional $(B_i)_i\longmapsto \sum_i\tr(B_i)$. Thus
	\[
	\operatorname{Im}(\res_D)
	=
	\left\{
	(B_i)_i\in\bigoplus_i\End(E_{p_i})
	\ \middle|\
	\sum_i\tr(B_i)=0
	\right\}
	=\mathfrak R_E.
	\]

	Now vary only the Higgs field, keeping $E$ and the framing $\eta$ fixed:
	$\Phi_\varepsilon=\Phi+\varepsilon\psi$, where
	$\psi\in H^0(C,\End(E)\otimes K_C(D))$. Then $
	d\mu_y(\psi)
	=
	\bigl(\eta_i\circ\res_{p_i}(\psi)\circ\eta_i^{-1}\bigr)_i$. Conjugation by the framings $\eta_i$ identifies $\mathfrak R_E$ with
	$\mathfrak R$. Since $\res_D$ is surjective onto $\mathfrak R_E$, the image of
	$d\mu_y$ contains all of $\mathfrak R$. Hence $d\mu_y$ is surjective.
\end{proof}
As before, take $\cO$ and its closure $\ov\cO$ satisfying the residue condition \eqref{eq:trace-compatibility}. Note that the singular locus of $\ov\cO$, denoted by $\mathrm{Sing}(\ov\cO)$, is given by $\ov\cO \setminus \cO$ (see \cite{kaledinsingularity, namikawa2005birationalgeometrysymplecticresolutions, FuSingularity} and references therein). Define
$(T^*\mathcal U_D)^{\ov\cO}:=\mu^{-1}(\ov\cO)$.

\begin{proposition}\label{prop:singularity-criterion}
	Let $y=(E,\Phi,\eta)\in (T^*\mathcal U_D)^{\ov\cO}$, and write $A=\mu(y)$. If $A\in\operatorname{Sing}(\ov\cO)$, then $(T^*\mathcal U_D)^{\ov\cO}$ is singular at $y$.
\end{proposition}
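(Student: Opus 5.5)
The plan is to use Lemma~\ref{lem:markman-residue-submersion}: the residue map $\mu:T^*\mathcal U_D\to\mathfrak R$ is smooth, so singularities of $\ov\cO$ pull back to singularities of $\mu^{-1}(\ov\cO)$.

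First, note that $\ov\cO\subset\mathfrak R$. This holds because the trace condition \eqref{eq:trace-compatibility} on $\cO$ is closed and therefore passes to the closure. Hence $(T^*\mathcal U_D)^{\ov\cO}=\mu^{-1}(\ov\cO)=T^*\mathcal U_D\times_{\mathfrak R}\ov\cO$ is a scheme-theoretic fiber product.

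Second, since $\mu$ is smooth, its base change $\mu':\mu^{-1}(\ov\cO)\to\ov\cO$ is also smooth, say of relative dimension $k$ at $y$. Smoothness is local in the smooth topology, and the local rings satisfy the standard relation: for a smooth (hence flat, with regular fibers) morphism, the source local ring $\cO_{\mu^{-1}(\ov\cO),y}$ is regular if and only if the target local ring $\cO_{\ov\cO,A}$ is regular. Concretely, étale locally near $y$ the map $\mu'$ is isomorphic to a projection $\ov\cO\times\mathbb A^k\to\ov\cO$. This follows because $\mu$ is étale-locally a projection $\mathfrak R\times\mathbb A^k\to\mathfrak R$, and base change preserves this form. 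Since regularity is preserved and reflected by étale maps and by products with affine space, the point $y$ is a regular point of $\mu^{-1}(\ov\cO)$ only if $A$ is a regular point of $\ov\cO$.

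Third, if $A\in\operatorname{Sing}(\ov\cO)=\ov\cO\setminus\cO$, then $\ov\cO$ is not regular at $A$. By the second step, $(T^*\mathcal U_D)^{\ov\cO}$ is therefore singular at $y$.

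The only delicate point is to confirm that $\ov\cO$, with its reduced structure, lies scheme-theoretically inside $\mathfrak R$, so that the fiber product is justified. This holds because $\mathfrak R$ is a closed linear subspace containing $\cO$, and it uses that $\mu$ really maps into $\mathfrak R$, which is the residue theorem. I will also check that the standard descent of regularity along flat morphisms is applied in the correct direction: if the source is regular and the map is flat, then the target is regular. Here flatness comes from smoothness, and this direction is the one needed.
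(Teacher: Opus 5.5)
Your proof is correct and follows the paper's argument. Both use the smoothness of $\mu$ from Lemma~\ref{lem:markman-residue-submersion}, so that $\mu^{-1}(\ov\cO)\to\ov\cO$ is a smooth base change and non-regularity of $\ov\cO$ at $A$ propagates to $y$; the only difference is that the paper transfers non-regularity through the formal isomorphism $\widehat{\cO}_{(T^*\mathcal U_D)^{\ov\cO},y}\simeq\widehat{\cO}_{\ov\cO,A}[[t_1,\ldots,t_N]]$, while you use descent of regularity along the flat local homomorphism $\cO_{\ov\cO,A}\to\cO_{\mu^{-1}(\ov\cO),y}$, which is equally valid.
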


\begin{proof}
	By Lemma~\ref{lem:markman-residue-submersion}, $\mu$ is smooth at $y$. Since
	$(T^*\mathcal U_D)^{\ov\cO}=\mu^{-1}(\ov\cO)$, the formal local structure theorem for smooth morphisms gives a noncanonical isomorphism
	\[
	\widehat{\cO}_{(T^*\mathcal U_D)^{\ov\cO},y}
	\simeq
	\widehat{\cO}_{\ov\cO,A}[[t_1,\ldots,t_N]]
	\]
	for some $N$. If $A\in\operatorname{Sing}(\ov\cO)$, then $\widehat{\cO}_{\ov\cO,A}$ is not regular. A formal power series ring over a nonregular complete local ring is again nonregular. Hence $(T^*\mathcal U_D)^{\ov\cO}$ is singular at $y$.
\end{proof}

We now pass from $T^*\mathcal{U}_D$ to the Higgs moduli. Let $G_D$ be the level group acting on the framing $\eta$. The group $G_D$ acts on $T^*\mathcal U_D$, and $\mu$ is equivariant for the adjoint action. Since $\ov\cO$ is invariant under conjugation, the locus $(T^*\mathcal U_D)^{\ov\cO}$ is $G_D$-invariant.

On the Higgs-stable locus, the $G_D$-action is free (see \cite[Lemma 6.7]{markmanspectral}): an element fixing $(E,\Phi,\eta)$ gives an automorphism of the stable Higgs pair $(E,\Phi)$ preserving the level structure, hence is scalar, and the level structure kills the scalar. Therefore the quotient map is smooth, and the quotient identifies with the corresponding Markman open subset of $\Higgs^{\ov\cO,s}(C,D)$.

\begin{corollary}\label{cor:singularity-downstairs}
	Let $(E,\Phi)\in\Higgs^{\ov\cO,s}(C,D)$ be a stable Higgs bundle whose underlying vector bundle $E$ is stable. Suppose that the residue $(\res_{p_i}\Phi)_i$ belongs to $\operatorname{Sing}(\ov\cO)$. Then $\Higgs^{\ov\cO,s}(C,D)$ is singular at $(E,\Phi)$.
\end{corollary}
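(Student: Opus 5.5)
The idea is to lift the local singularity statement of Proposition~\ref{prop:singularity-criterion} from Markman's framed space $(T^*\mathcal U_D)^{\ov\cO}$ down to the Higgs moduli space, using the fact that the framed space sits smoothly over an open piece of $\Higgs^{\ov\cO,s}(C,D)$.

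Set $y=(E,\Phi,\eta)$, where $\eta$ is an arbitrary level structure on $E|_D$. Since $E$ is stable, $(E,\eta)\in\mathcal U_D$, so $y\in T^*\mathcal U_D$. Under the framing, the conjugation-invariance of $\ov\cO$ and of $\operatorname{Sing}(\ov\cO)=\ov\cO\setminus\cO$ gives
\[
A=\mu(y)\in\operatorname{Sing}(\ov\cO)\subset\ov\cO .
\]
Thus $y\in(T^*\mathcal U_D)^{\ov\cO}$, and Proposition~\ref{prop:singularity-criterion} shows that $(T^*\mathcal U_D)^{\ov\cO}$ is singular at $y$.

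Next, restrict to the open subset of $(T^*\mathcal U_D)^{\ov\cO}$ where $(E,\Phi)$ is Higgs-stable. This is a $G_D$-invariant open set containing $y$. By the discussion preceding the corollary, $G_D$ acts freely there. The quotient map
\[
q:(T^*\mathcal U_D)^{\ov\cO,\,\mathrm{stable}}\to U\subset\Higgs^{\ov\cO,s}(C,D)
\]
is smooth, indeed a principal $G_D$-bundle. Its image $U$ is the open Markman locus where the underlying bundle is stable, so $U$ is an open neighborhood of $(E,\Phi)$.

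To finish, I would use that regularity descends and ascends along smooth morphisms. Concretely, if $U$ were smooth at $(E,\Phi)=q(y)$, then the total space would be smooth at $y$, since $q$ is smooth. More precisely, $\widehat{\cO}_{y}\simeq\widehat{\cO}_{U,q(y)}[[s_1,\dots,s_{\dim G_D}]]$, and this ring is regular if and only if $\widehat{\cO}_{U,q(y)}$ is regular. That contradicts the previous step. Hence $U$, and therefore $\Higgs^{\ov\cO,s}(C,D)$, is singular at $(E,\Phi)$, because singularity is local and $U$ is open.

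The main point needing care is the identification of the free quotient with an open subscheme of $\Higgs^{\ov\cO,s}(C,D)$ as schemes, not merely on points. This requires the reduced or closed-subscheme structure $\mu^{-1}(\ov\cO)$ to match the one defined via $\res_D^{-1}$ on Nitsure's $R^s$. I would argue this by noting that both are pullbacks of the same $\GL_r$-invariant closed subscheme $\ov\cO$ under compatible residue maps, which are related through the frame bundles $P_i$. Both quotient presentations are then smooth-locally equivalent, so singularity transfers.
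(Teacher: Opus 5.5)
Your proposal is correct and follows the same route as the paper. You choose a level structure $\eta$, apply Proposition~\ref{prop:singularity-criterion} to get that $(T^*\mathcal U_D)^{\ov\cO}$ is singular at $y=(E,\Phi,\eta)$, and then use the smooth quotient by the free $G_D$-action to push singularity down to $\Higgs^{\ov\cO,s}(C,D)$. Your extra remark, that the scheme structures on $\mu^{-1}(\ov\cO)$ and $R^{\ov\cO,s}$ must agree under the quotient, addresses a point the paper takes for granted when it identifies the quotient with the Markman open subset.
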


\begin{proof}
	Choose a level structure $\eta$ and set $y=(E,\Phi,\eta)$. Then $\mu(y)$ is conjugate to the residue tuple and hence lies in $\operatorname{Sing}(\ov\cO)$. By Proposition~\ref{prop:singularity-criterion}, $(T^*\mathcal U_D)^{\ov\cO}$ is singular at $y$. Since the quotient map from the level-structured locus to the corresponding open subset of $\Higgs^{\ov\cO,s}(C,D)$ is smooth, smoothness downstairs would imply smoothness upstairs, a contradiction.
\end{proof}

\begin{example}
	Let $r=2$ and $D=\{p\}$. Let $\mathcal N\subset\mathfrak{sl}_2$ be the nilpotent cone, i.e. the closure of the nonzero nilpotent orbit. Writing
	\[
	A=
	\begin{pmatrix}
	a&b\\
	c&-a
	\end{pmatrix},
	\]
	we have $\mathcal N=\{a^2+bc=0\}\subset\mathfrak{sl}_2$. Hence
	$\operatorname{Sing}(\mathcal N)=\{0\}$. Therefore, if $E$ is stable and $(E,\Phi)$ is a stable meromorphic Higgs bundle with $\res_p(\Phi)=0$, then $\Higgs^{\mathcal N,s}(C,p)$ is singular at $(E,\Phi)$. Formally locally, the singularity contains the factor $
	\C[[a,b,c]]/(a^2+bc)$, up to a formal smooth factor.
\end{example}

\subsection{The deformation theory}\label{sec:mero deform}

We study the infinitesimal deformation theory for $\Higgs^{\cO,s}(C,D)$ and compare it with the characteristic distribution of the Bottacin--Markman Poisson structure on $\Higgs^s(C,D)$ \cite{markmanspectral,Bottacin1995symplectic}.

Let $\mathcal E=(E,\Phi)$ be a stable meromorphic Higgs bundle. Consider the complex
\[
C_{\text{mero},\mathcal E}^\bullet:
\qquad
\End(E)\xrightarrow{[\Phi,\cdot]}\End(E)\otimes K_C(D),
\]
placed in degrees $0,1$. Then it is well known that $T_{\mathcal E}\Higgs^s(C,D)\cong\HH^1(C_{\mathrm{mero},\mathcal E}^\bullet)$. 

Let $\End(E)^\vee$ be a $\cO_C$-linear dual of $\End(E)$. The trace pairing induces an isomorphism $\End(E)^\vee \cong \End(E)$. Then the $\cO_C$-linear dual is identified with 
\[
(C_{\text{mero},\mathcal E}^\bullet)^\vee:
\qquad
\End(E)\otimes K_C^{-1}(-D)\xrightarrow{-[\Phi,\cdot]}\End(E)
\]
placed in degrees $-1,0$. We define the Serre-dual complex $D(C^\bullet_{\text{mero},\mathcal{E}}):=(C_{\text{mero},\mathcal E}^\bullet)^\vee \otimes K_C[-1]$. Then Serre duality gives $T_{\mathcal E}^*\Higgs^s(C,D)\cong\HH^1(D(C_{\text{mero},\mathcal E}^\bullet))$. In particular, there is a canonical inclusion map of complexes $D(C_{\text{mero},\mathcal E}^\bullet) \to C_{\text{mero},\mathcal E}^\bullet$ given by 

\[\begin{tikzcd}
    \End(\mathcal E)(-D)
    \arrow[r, "{-[\Phi,\cdot]}"]
    \arrow[d, hook, "id"]
    &
    \End(\mathcal E)\otimes K_C
    \arrow[d, hook, "-id \otimes \iota"]
    \\
    \End(\mathcal E)
    \arrow[r, "{[\Phi,\cdot]}"]
    &
    \End(\mathcal E)\otimes K_C(D).
\end{tikzcd}\]
where $\iota:K_C \hookrightarrow K_C(D)$ is a canonical inclusion. Then the induced morphism on the hypercohomology $\HH^1(-)$ gives so-called Bottacin--Markman Poisson map 
\[\pi_{\mathcal E}^\sharp:T_{\mathcal E}^*\Higgs^s(C,D)\to T_{\mathcal E}\Higgs^s(C,D).\]

\noindent Now let $\mathcal E=(E,\Phi)\in\Higgs^{\cO,s}(C,D)$ and set
$A_i:=\res_{p_i}(\Phi)\in\cO_i$. Define
\[
\cK_{\cO,\mathcal E}
:=
\left\{
s\in\End(E)\otimes K_C(D)
\ \middle|\
\res_{p_i}(s)\in T_{A_i}\cO_i
\text{ for all }i
\right\}.
\]
Since $T_{A_i}\cO_i=[A_i, \End(E)_{p_i}]$, we have a well-defined complex 
\[
C_{\cO,\mathcal E}^\bullet:
\qquad
\End(E)\xrightarrow{[\Phi,\cdot]}\cK_{\cO,\mathcal E},
\]
 placed in degrees $0,1$. 

\begin{proposition}\label{prop:tangent-CO}
	There is a canonical identification $T_{\mathcal E}\Higgs^{\cO,s}(C,D)\cong \HH^1(C_{\cO,\mathcal E}^\bullet)$.
\end{proposition}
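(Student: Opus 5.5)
We compute first-order deformations of a point $\mathcal E=(E,\Phi)$ of $\Higgs^{\cO,s}(C,D)$ over $\C[\varepsilon]/(\varepsilon^2)$ in \v{C}ech terms and track the extra condition imposed by the residue. Fix an affine open cover $\{U_\alpha\}$ of $C$ with each $p_i$ in exactly one $U_\alpha$. A first-order deformation of $(E,\Phi)$ is given by a \v{C}ech cochain $(\{\eta_{\alpha\beta}\},\{\psi_\alpha\})$. Here $\eta_{\alpha\beta}\in\End(E)(U_{\alpha\beta})$ perturbs the transition functions and $\psi_\alpha\in(\End(E)\otimes K_C(D))(U_\alpha)$ perturbs the Higgs field, subject to the cocycle condition $\psi_\beta-\psi_\alpha=[\Phi,\eta_{\alpha\beta}]$. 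Modulo coboundaries, these classes form $\HH^1(C^\bullet_{\mathrm{mero},\mathcal E})\cong T_{\mathcal E}\Higgs^s(C,D)$.

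The residue of the deformed Higgs field at $p_i$ is $A_i+\varepsilon\,\res_{p_i}(\psi_\alpha)$, computed in the unique chart $U_\alpha\ni p_i$. It is well defined up to conjugation by $1+\varepsilon g$, because a change of trivialization changes $\psi_\alpha$ by $[\Phi,g]$, whose residue is $[A_i,g(p_i)]$. The deformed residue stays in the orbit $\cO_i$ to first order, meaning it is conjugate by some $1+\varepsilon X$ to a point of $\cO_i$ over $\C[\varepsilon]$, exactly when $\res_{p_i}(\psi_\alpha)\in T_{A_i}\cO_i=[A_i,\End(E)_{p_i}]$. The orbit $\cO_i$ is smooth, so its tangent space is the Zariski tangent space and this condition describes $\C[\varepsilon]$-points. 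Since the condition is conjugation-invariant and $\res_{p_i}([\Phi,\eta])=[A_i,\eta(p_i)]\in T_{A_i}\cO_i$, it is compatible with the coboundary relation. The cocycles satisfying it are therefore exactly the \v{C}ech $1$-cocycles of $C^\bullet_{\cO,\mathcal E}$. This gives a natural injection
\[
\HH^1(C^\bullet_{\cO,\mathcal E})\longrightarrow \HH^1(C^\bullet_{\mathrm{mero},\mathcal E}),
\]
whose image is precisely the set of first-order deformations whose residues stay in $\cO$.

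The main obstacle is to identify this image with the tangent space of the scheme $\Higgs^{\cO,s}(C,D)=R^{\cO,s}/G$ as defined, and not merely with tangent vectors of the underlying set. The plan is to work on $R^{\cO,s}$. By definition, $R^{\cO,s}=R^s\cap\res_D^{-1}\bigl(\prod_i\cO_i(\mathcal E_{p_i})\bigr)$, and $\prod_i\cO_i(\mathcal E_{p_i})\to R$ is a smooth fibre bundle with fibre $\prod\cO_i$. Hence $T_xR^{\cO,s}$ is the preimage of $\bigoplus_i T\cO_i(\mathcal E_{p_i})$ under $d\res_D$ inside $T_xR^s$. The $\C[\varepsilon]$-points of $R^{\cO,s}$ through $x$ are then the $\C[\varepsilon]$-points of $R^s$ whose residue satisfies the first-order condition above.

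The stable locus carries a free $\mathrm{PGL}(V)$-action with a geometric quotient. Over $\Higgs^s$ this is a principal bundle, for example using Luna's étale slice theorem, since stable points have trivial stabilizer in $\mathrm{PGL}$. It restricts to a principal bundle over $\Higgs^{\cO,s}$ because $R^{\cO,s}$ is $G$-invariant. Therefore $T_{\mathcal E}\Higgs^{\cO,s}(C,D)=T_xR^{\cO,s}/T_x(G\cdot x)$. This is the subspace of $T_xR^s/T_x(G\cdot x)=\HH^1(C^\bullet_{\mathrm{mero},\mathcal E})$ cut out by the residue condition, which is the image computed above.

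Canonicity, meaning independence of the cover and of choices, follows from the standard comparison of \v{C}ech hypercohomology under refinement.

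If smoothness of $\Higgs^{\cO,s}(C,D)$ is needed later, one can also check that $\dim\HH^1(C^\bullet_{\cO,\mathcal E})$ is constant. This would use $\HH^0=\C$, $\HH^2=0$ (dual to $\HH^0$ of the dual complex restricted appropriately), and an Euler characteristic computation. That check is not needed for the identification itself.
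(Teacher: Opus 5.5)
Your proposal is correct and is essentially the paper's argument: the same \v{C}ech description of first-order deformations, where the condition $\res_{p_i}(\psi_\alpha)\in T_{A_i}\cO_i$ on the unique chart containing $p_i$ singles out exactly the cocycles of $C^\bullet_{\cO,\mathcal E}$. Your detour through the principal $\mathrm{PGL}(V)$-bundle $R^s\to\Higgs^s(C,D)$ and the scheme-theoretic preimage under $\res_D$ just makes explicit the identification with the Zariski tangent space of $R^{\cO,s}/G$, which the paper leaves implicit; the only slip is in your closing aside, which is not needed here, where $\HH^2(C^\bullet_{\cO,\mathcal E})\cong\C\cdot\id_E$ rather than $0$, and only the trace-free part of the obstruction space vanishes.
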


\begin{proof}
	Choose an affine open cover $\mathfrak U=(U_\alpha)$ such that each marked point lies in a unique open set. A first-order deformation as a meromorphic Higgs bundle is represented by a \v Cech cocycle
	\[
	(u_{\alpha\beta},v_\alpha)\in
	\check C^1(\mathfrak U,\End(E))
	\oplus
	\check C^0(\mathfrak U,\End(E)\otimes K_C(D))
	\]
	satisfying $\delta u=0$ and $\delta v=[\Phi,u]$, where $\delta$ is a \v Cech differential. 

	The deformation preserves the residue orbit condition if and only if, for every $i$, the first-order residue variation $\res_{p_i}(v_\alpha)$ lies in $T_{A_i}\cO_i$ on the open set containing $p_i$. This is exactly the condition that $v_\alpha$ is a section of $\cK_{\cO,\mathcal E}$. Therefore first-order deformations inside $\Higgs^{\cO,s}(C,D)$ are governed by $C_{\cO,\mathcal E}^\bullet$, and the tangent space is its first hypercohomology.
\end{proof}

Next, we describe the Serre-dual complex $D(C^\bullet_{\cO,\mathcal{E}})$. Note that by definition, $\cK_{\cO, \mathcal{E}}$ is locally free and satisfies
\[ \End(E)\otimes K_C \subset \cK_{\cO, \mathcal{E}} \subset \End(E)\otimes K_C(D). \] By taking the $\cO_C$-linear dual, we have 
\[\cK^{\vee}_{\cO, \mathcal{E}}\otimes K_C\to \End(E)^\vee  \] 
which is an inclusion, because $\cK^{\vee}_{\cO, \mathcal{E}}\otimes K_C$ is torsion free, so any kernel must vanish. Since $\End(E)^\vee\cong \End(E)$ under the trace pairing, we can identify $\cK^{\vee}_{\cO, \mathcal{E}}\otimes K_C$ as a subsheaf of $\End(E)$. More precisely, the trace pairing sends a section $s$ in $\End(E)$ to a local map $\End(E)\otimes K_C(D)\to K_C(D)$ which restricts to $f:\cK_{\cO,\mathcal E}\to K_C(D)$. Then $f$ is a section of $\cK_{\cO,\mathcal E}^\vee\otimes K_C= \mathcal Hom(\cK_{\cO, \mathcal E}, K_C)$ if and only if the residues of $f(t) = \tr(s\cdot t)$ are zero for all $t\in \cK_{\cO,\mathcal E}$. Recall from Example~\ref{ex:GLn-linear algebra} that the orthogonal complement of $T_{A_i}\cO_i$ under the trace pairing is the centralizer of $A_i$, denoted by $Z_{\End(E)_{p_i}}(A_i)$. Hence, we have
\[
\mathcal{K}_{\cO,\mathcal{E}}^\vee \otimes K_C=\left\{s \in \End(E) \ \middle|\ s(p_i) \in Z_{\End(E)_{p_i}}(A_i)\text{ for all } i\right\}
\]
which will be denoted by $\mathcal{Z}_{\cO,\mathcal{E}}$. Then the Serre-dual complex  $D(C^\bullet_{\cO,\mathcal{E}})$ is given by 
\[
D(C^\bullet_{\cO,\mathcal{E}}): \qquad \mathcal{Z}_{\cO,\mathcal{E}} \xrightarrow{-[\Phi, \cdot]} \End(E)\otimes K_C
\]
placed in degrees $0,1$. 

\begin{proposition}\label{prop:smooth-HiggsO}
	The moduli space $\Higgs^{\cO,s}(C,D)$ is smooth of dimension
	\[
	\dim \Higgs^{\cO,s}(C,D)
	=
	2r^2(g-1)+2+\sum_{i=1}^n\dim\cO_i.
	\]
\end{proposition}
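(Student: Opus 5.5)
The plan is to run the standard deformation-theoretic argument for the complex $C^\bullet_{\cO,\mathcal E}$. By Proposition~\ref{prop:tangent-CO}, first-order deformations inside $\Higgs^{\cO,s}(C,D)$ are governed by $C^\bullet_{\cO,\mathcal E}$, and obstructions lie in $\HH^2(C^\bullet_{\cO,\mathcal E})$. Smoothness will follow once we show $\HH^2(C^\bullet_{\cO,\mathcal E})=0$ (more precisely, that the trace-free part of the obstruction vanishes, since the trace component is handled by the fixed condition \eqref{eq:trace-compatibility}). The dimension will then be $\dim\HH^1 = \dim\HH^0 + \dim\HH^2 - \chi(C^\bullet_{\cO,\mathcal E})$.

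For the Euler characteristic, Riemann--Roch applied to the two terms gives
\[
\chi(C^\bullet_{\cO,\mathcal E})=\chi(\End(E))-\chi(\cK_{\cO,\mathcal E}).
\]
The first term is $r^2(1-g)$. Next, $\cK_{\cO,\mathcal E}$ sits in an exact sequence
\[
0\to\End(E)\otimes K_C\to\cK_{\cO,\mathcal E}\to\bigoplus_i T_{A_i}\cO_i\to 0,
\]
so $\chi(\cK_{\cO,\mathcal E})=r^2(g-1)+\sum_i\dim\cO_i$. Hence $-\chi = 2r^2(g-1)+\sum_i\dim\cO_i$.

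For $\HH^0$, stability of $(E,\Phi)$ forces the endomorphisms commuting with $\Phi$ to be scalars, so $\HH^0=\C$. For $\HH^2$, Serre duality for the complex gives $\HH^2(C^\bullet_{\cO,\mathcal E})\cong\HH^0(D(C^\bullet_{\cO,\mathcal E}))^*$, where $D(C^\bullet_{\cO,\mathcal E})$ was identified above as $\mathcal Z_{\cO,\mathcal E}\xrightarrow{-[\Phi,\cdot]}\End(E)\otimes K_C$. Its $\HH^0$ consists of sections $s$ of $\mathcal Z_{\cO,\mathcal E}\subset\End(E)$ with $[\Phi,s]=0$. By stability these are again scalars, and scalars do lie in $\mathcal Z_{\cO,\mathcal E}$, since they centralize each $A_i$. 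So $\HH^2\cong\C$, and
\[
\dim\HH^1=1+1+2r^2(g-1)+\sum_i\dim\cO_i,
\]
which is the claimed formula.

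The main obstacle is that $\HH^2$ is one-dimensional rather than zero. I would handle this with the trace map. The trace $\tr:C^\bullet_{\cO,\mathcal E}\to(\cO_C\to K_C(D)_{\mathrm{res}})$ is dual to the inclusion of scalars $\cO_C\to\mathcal Z_{\cO,\mathcal E}$. Under this duality the surviving $\HH^2$ is exactly the trace part, namely the cokernel of $H^0(K_C(D))\to\bigoplus_i\C$, $\omega\mapsto\sum_i\res$.

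Obstructions to extending a deformation map to $\HH^2$ compatibly with trace. The trace of the obstruction is the obstruction to deforming the line bundle $\det E$ with $\tr\Phi$ having fixed residues $\tr A_i$. That problem is unobstructed: $\det E$ lives in $\Pic$ and $\tr\Phi$ in an affine space, and \eqref{eq:trace-compatibility} ensures the residue constraint is consistent. So the obstruction lies in the kernel of the trace map on $\HH^2$, which vanishes, and $\Higgs^{\cO,s}(C,D)$ is smooth.

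An alternative route, which I would use as a cross-check, is Markman's chart from Lemma~\ref{lem:markman-residue-submersion}. On the open locus where $E$ is stable, $\mu^{-1}(\cO)$ is smooth because $\mu$ is a submersion and $\cO$ is smooth. Passing to the quotient by the free $G_D$-action gives the same dimension count.
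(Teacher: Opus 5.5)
Your proposal is correct and follows the paper's own argument. In both, $\HH^0=\C$ by stability, and $\HH^2\cong\HH^0(D(C^\bullet_{\cO,\mathcal E}))^\vee\cong\C$ via the centralizer complex. The trace-free obstruction then vanishes by the determinant/trace reduction, which you spell out while the paper cites it as standard, and the same Riemann--Roch count gives $\dim\HH^1=2-\chi$. The only cosmetic difference is that you compute $\chi(\cK_{\cO,\mathcal E})$ from the subsheaf $\End(E)\otimes K_C$ with quotient $\bigoplus_i T_{A_i}\cO_i$. The paper instead subtracts the length of $\bigoplus_i\End(E_{p_i})/T_{A_i}\cO_i$ from $\chi(\End(E)\otimes K_C(D))$.
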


\begin{proof}
Since $\mathcal E$ is stable, we have $\HH^0(C_{\cO,\mathcal E}^\bullet)=\C\cdot\id_E$. By Serre duality, the obstruction space is given by $
\HH^2(C_{\cO,\mathcal E}^\bullet)
\cong
\HH^0(D_{\cO,\mathcal E}^\bullet)^\vee$. A section
contributing to $\HH^0(D_{\cO,\mathcal E}^\bullet)$ is a global endomorphism of
$E$ commuting with $\Phi$, with the required centralizer condition at the marked
points. By stability, such an endomorphism is scalar. Conversely, scalar
endomorphisms clearly define elements of $\HH^0(D_{\cO,\mathcal E}^\bullet)$.
Therefore $\HH^2(C_{\cO,\mathcal E}^\bullet)=\C \cdot \mathrm{id}_E$. Thus the trace-free obstruction space vanishes, and $\Higgs^{\cO,s}(C,D)$ is
smooth. By applying the standard argument of taking the determinant of the Higgs bundles and the traceless part of the complex (see e.g. \cite[Corollary 2.11]{LeeLee2024}), it follows that the obstruction space for the traceless part of the complex vanishes, and hence, $\Higgs^{\cO,s}(C,D)$ is
smooth.

It remains to compute the dimension. The quotient of
$\End(E)\otimes K_C(D)$ by the degree-one term of
$C_{\cO,\mathcal E}^\bullet$ is supported at $D$, and its fiber at $p_i$ is $
\End(E_{p_i})/T_{\res_{p_i}(\Phi)}\cO_i$. Hence its length is $\sum_{i=1}^n(r^2-\dim\cO_i)$.
Therefore
\[
\begin{aligned}
    \chi(C_{\cO,\mathcal E}^\bullet)&=
\chi(\End(E))
-
\chi(\End(E)\otimes K_C(D))
+
\sum_{i=1}^n(r^2-\dim\cO_i)\\
&=r^2(1-g)-r^2(g-1+n)+\sum_{i=1}^n(r^2-\dim \cO_i) \\
&=2r^2(1-g)-\sum_{i=1}^n\dim \cO_i
\end{aligned}
\]
where the second equality follows from Riemann--Roch theorem. Now the result follows from Proposition~\ref{prop:tangent-CO} and the identity $\dim \HH^1(C^\bullet_{\cO, \mathcal{E}})=2-\chi(C^\bullet_{\cO,\mathcal{E}})$. 
\end{proof}

Next, we describe a symplectic form on $\Higgs^{\cO,s}(C,D)$. By Serre duality and Proposition~\ref{prop:tangent-CO}, we have $T_{\mathcal E}^*\Higgs^{\cO,s}(C,D)\cong\HH^1(D(C_{\cO,\mathcal E}^\bullet))$. In particular, there is a canonical inclusion $D(C^\bullet_{\cO, \mathcal{E}}) \hookrightarrow C^\bullet_{\cO, \mathcal{E}}$ given by 
\begin{equation}\label{e:O serredual map}
\begin{tikzcd}
    \mathcal{Z}_{\cO, \mathcal{E}}
    \arrow[r, "{-[\Phi,\cdot]}"]
    \arrow[d, hook, "inc"]
    &
    \End(\mathcal E)\otimes K_C
    \arrow[d, hook, "-id \otimes \iota"]
    \\
    \End(\mathcal E)
    \arrow[r, "{[\Phi,\cdot]}"]
    &
    \mathcal{K}_{\cO,\mathcal{E}}.
\end{tikzcd}
\end{equation}
where both vertical maps are canonical inclusions. Indeed, any section $s \in \End(E)\otimes K_C$ maps to $\mathcal{K}_{\cO, \mathcal{E}}$ because the residue of $s$ at $p_i$ is zero. Note that this inclusion is an isomorphism away from $D$ while at $D$, the cokernel is given by 
\[
\bigoplus_i \End(E)_{p_i}/Z_{\End(E)_{p_i}}(A_i) \xrightarrow{\bigoplus_i[A_i, \cdot]} \bigoplus_i T_{A_i}\cO_i
\]
As explained in Example \ref{ex:GLn-linear algebra}, the differential $\bigoplus_i[A_i, \cdot]$ becomes an isomorphism so that the hypercohomology of this cokernel complex is trivial. Therefore, the induced morphism on the hypercohomology $\HH^1(-)$ gives an isomorphism $\pi^\sharp_{\cO,\mathcal{E}}:T_{\mathcal E}^*\Higgs^{\cO,s}(C,D)\xrightarrow{\cong} T_{\mathcal E}\Higgs^{\cO,s}(C,D)$. Equivalently, we have a nondegenerate bilinear form at each tangent space
\[
\omega^{\cO}_{\mathcal E}:
T_{\mathcal E}\Higgs^{\cO,s}(C,D)\times
T_{\mathcal E}\Higgs^{\cO,s}(C,D)
\longrightarrow \C.
\]

We show that the induced $\omega^\cO$ is a symplectic form by comparing it with the Bottacin--Markman Poisson structure. Consider the following commutative diagram of complexes
\[
\begin{tikzcd}
D(C_{\text{mero},\mathcal{E}}^\bullet) \arrow[r,hook, "\mathsf{q}_1"]\arrow[d,hook] & D(C_{\cO,\mathcal{E}}^\bullet) \arrow[d,hook] \\
C_{\text{mero},\mathcal{E}}^\bullet & C_{\cO,\mathcal{E}}^\bullet \arrow[l, hook', "\mathsf{q}_2"]
\end{tikzcd}
\]
where the morphisms are canonical inclusions. By taking hypercohomology $\HH^1(-)$, it induces the following commutative diagram 
\[
\begin{tikzcd}
T^*_\mathcal{E}\Higgs^{s}(C,D) \arrow[r,"\HH^1(\mathsf{q}_1)"] \arrow[d, "\pi^\sharp_{\mathcal{E}}"] & T^*_\mathcal{E}\Higgs^{\cO,s}(C,D) \arrow[d, "\pi^\sharp_{\cO,\mathcal{E}}"] \\
T_\mathcal{E}\Higgs^{s}(C,D) & T_\mathcal{E}\Higgs^{\cO,s}(C,D) \arrow[l,"\HH^1(\mathsf{q}_2)" ]
\end{tikzcd}
\]
From the geometric viewpoint, it is clear that $\HH^1(\mathsf{q}_1)$ is surjective and $\HH^1(\mathsf{q}_2)$ is injective. One can see this directly from the deformation complex computation. For completeness, we include the proof of this fact. 

\begin{lem}\label{lem:inj,surj,co}
    Let the notation be as above. Then the following holds. 
    \begin{enumerate}
        \item $\HH^1(\mathsf{q}_1)$ is surjective;
        \item $\HH^1(\mathsf{q}_2)$ is injective. 
    \end{enumerate}
\end{lem}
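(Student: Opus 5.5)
Both statements come from short exact sequences of complexes whose cokernels are skyscraper complexes supported at $D$. We then look at the long exact sequences in hypercohomology and check that the relevant connecting terms vanish, using stability of $\mathcal E$ and the residue computation from Lemma~\ref{lem:markman-residue-submersion}. In fact (1) is the Serre dual of (2): $\mathsf{q}_1$ is, up to sign, the Serre dual of $\mathsf{q}_2$, since $D(-)$ is contravariant and $\HH^1(D(C^\bullet))\cong\HH^1(C^\bullet)^\vee$. So it suffices to prove (2) and dualize. As a consistency check I would also argue (1) directly.

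For (2), consider the short exact sequence of complexes
\[
0\to C^\bullet_{\cO,\mathcal E}\xrightarrow{\mathsf{q}_2} C^\bullet_{\mathrm{mero},\mathcal E}\to Q^\bullet\to 0.
\]
In degree $0$ the map is the identity, so $Q^0=0$. In degree $1$ we have $Q^1=\bigoplus_i \End(E_{p_i})/T_{A_i}\cO_i$, a skyscraper sheaf. Hence $Q^\bullet$ is this skyscraper placed in degree $1$, and $\HH^0(Q^\bullet)=0$, because $Q^\bullet$ has no degree-$0$ part and $Q^1$ has no higher cohomology. The long exact sequence
\[
\HH^0(Q^\bullet)\to\HH^1(C^\bullet_{\cO,\mathcal E})\to\HH^1(C^\bullet_{\mathrm{mero},\mathcal E})
\]
then immediately gives injectivity of $\HH^1(\mathsf{q}_2)$. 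This step is essentially formal.

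For (1), a direct argument goes through the sequence
\[
0\to D(C^\bullet_{\mathrm{mero},\mathcal E})\xrightarrow{\mathsf{q}_1} D(C^\bullet_{\cO,\mathcal E})\to Q'^\bullet\to 0.
\]
In degree $1$ both terms are $\End(E)\otimes K_C$, so $Q'^1=0$. In degree $0$ the cokernel is $\mathcal Z_{\cO,\mathcal E}/\End(E)(-D)=\bigoplus_i Z_{\End(E_{p_i})}(A_i)$, a skyscraper in degree $0$. The relevant piece of the long exact sequence is
\[
\HH^1(D(C^\bullet_{\mathrm{mero},\mathcal E}))\to \HH^1(D(C^\bullet_{\cO,\mathcal E}))\to \HH^1(Q'^\bullet)=0,
\]
where $\HH^1(Q'^\bullet)$ vanishes because $Q'^\bullet$ is a skyscraper concentrated in degree $0$. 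This gives surjectivity of $\HH^1(\mathsf{q}_1)$.

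The only point that needs care is identifying the cokernels correctly. For $\mathsf{q}_1$ this uses the description $\mathcal Z_{\cO,\mathcal E}=\cK^\vee_{\cO,\mathcal E}\otimes K_C$ established above. Alongside it we must check that the squares commute with the sign conventions in the diagrams, so that the maps really are morphisms of complexes. Both checks are immediate from the definitions, since all maps are inclusions that are the identity away from $D$. With the cokernels identified, both statements follow from the vanishing of $\HH^0$ (respectively $\HH^1$) of a skyscraper sheaf sitting in degree $1$ (respectively $0$); no stability input is needed.
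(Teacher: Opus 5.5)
Your proof is correct and follows the paper's argument: $\Coker(\mathsf{q}_1)$ is the skyscraper $\bigoplus_i Z_{\End(E)_{p_i}}(A_i)$ in degree $0$, so its $\HH^1$ vanishes, and $\Coker(\mathsf{q}_2)$ is $\bigoplus_i \End(E)_{p_i}/T_{A_i}\cO_i$ in degree $1$, so its $\HH^0$ vanishes; the long exact sequences then give surjectivity and injectivity. Your extra remark that $\mathsf{q}_1=D(\mathsf{q}_2)$ up to sign, making (1) the Serre dual of (2), is valid, and as you conclude yourself, the opening mention of stability and Lemma~\ref{lem:markman-residue-submersion} is not needed.
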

\begin{proof}
    For $(1)$, consider the short exact sequence of complexes
    \[
    0 \to D(C_{\text{mero}, \mathcal{E}}^\bullet) \xrightarrow{\mathsf{q_1}}  D(C_{\cO, \mathcal{E}}^\bullet) \to \Coker(\mathsf{q_1}) \to 0
    \]
    where $\Coker(\mathsf{q_1}):\bigoplus_i Z_{\End(E)_{p_i}}(A_i)\to 0$ placed in degrees $0,1$. Therefore $\mathbb{H}^1(\Coker(\mathsf{q}_1))=0$. By taking the induced long exact sequence, the claim follows. 

    For $(2)$, consider the short exact sequence of complexes
    \[
    0 \to C_{\cO, \mathcal{E}}^\bullet \xrightarrow{\mathsf{q_2}}  C_{\text{mero}, \mathcal{E}}^\bullet \to \Coker(\mathsf{q_2}) \to 0
    \]
    where $\Coker(\mathsf{q_2}): 0\to  \bigoplus_i \End(E)_{p_i}/T_{A_i}\cO_i$. Therefore, $\mathbb{H}^0(\Coker(\mathsf{q}_2))=0$. By taking the induced long exact sequence, the claim follows. 
\end{proof}
Combining this with the standard fact about Poisson geometry recalled in
 Proposition~\ref{prop:symp leaf closed}, we obtain the following corollary. 
\begin{corollary}\label{cor:characteristic-distribution}
For $\mathcal{E}\in \Higgs^{\cO,s}(C,D)$, the image of the Bottacin--Markman Poisson map $\pi^\sharp_{\mathcal{E}}$ is precisely $T_{\mathcal E}\Higgs^{\mathcal O,s}(C,D)$. Moreover, the induced nondegenerate bivector on $\Higgs^{\mathcal O,s}(C,D)$
defines a holomorphic symplectic form $\omega^{\mathcal O}$.
\end{corollary}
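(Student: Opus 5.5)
The plan is to read off both claims of Corollary~\ref{cor:characteristic-distribution} from the commutative square of hypercohomology groups displayed just before Lemma~\ref{lem:inj,surj,co}, and then apply Proposition~\ref{prop:symp leaf closed} with $X=\Higgs^s(C,D)$ and $Y=\Higgs^{\cO,s}(C,D)$. The square factors $\pi^\sharp_{\mathcal E}$ as
\[
\pi^\sharp_{\mathcal E}=\HH^1(\mathsf q_2)\circ\pi^\sharp_{\cO,\mathcal E}\circ\HH^1(\mathsf q_1).
\]
By Lemma~\ref{lem:inj,surj,co}, $\HH^1(\mathsf q_1)$ is surjective. We showed earlier that $\pi^\sharp_{\cO,\mathcal E}$ is an isomorphism, since the cokernel complex of \eqref{e:O serredual map} is acyclic. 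Hence the image of $\pi^\sharp_{\mathcal E}$ equals the image of $\HH^1(\mathsf q_2)$, and this map is injective. The first claim then reduces to identifying $\HH^1(\mathsf q_2)$ with the differential of the inclusion $\Higgs^{\cO,s}(C,D)\hookrightarrow\Higgs^s(C,D)$.

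For that identification I will go back to the \v Cech description in the proof of Proposition~\ref{prop:tangent-CO}. A first-order deformation inside $\Higgs^{\cO,s}(C,D)$ is a cocycle $(u_{\alpha\beta},v_\alpha)$ with $v_\alpha$ a section of $\cK_{\cO,\mathcal E}$. Forgetting the residue constraint, the same cocycle is a deformation in $\Higgs^s(C,D)$. On cocycles, this forgetting is exactly the inclusion $\mathsf q_2$, so $\HH^1(\mathsf q_2)$ is the tangent map of the inclusion. Consequently $\operatorname{Im}(\pi^\sharp_{\mathcal E})=T_{\mathcal E}\Higgs^{\cO,s}(C,D)$. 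The smoothness needed here comes from Proposition~\ref{prop:smooth-HiggsO}.

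Next I would check that the same square is the factorization required in Proposition~\ref{prop:symp leaf closed}, namely
\[
T^*_X|_Y\to T^*_Y\xrightarrow{\ \sigma\ }T_Y\to T_X|_Y .
\]
The right-hand map is $\HH^1(\mathsf q_2)$, identified above as the tangent inclusion. The left-hand map $\HH^1(\mathsf q_1)$ is its Serre dual: the complexes $D(C^\bullet_{\mathrm{mero},\mathcal E})\to D(C^\bullet_{\cO,\mathcal E})$ are obtained from $C^\bullet_{\cO,\mathcal E}\to C^\bullet_{\mathrm{mero},\mathcal E}$ by applying $(-)^\vee\otimes K_C[-1]$, and Serre duality is functorial. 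So $\HH^1(\mathsf q_1)$ is the transpose of the tangent inclusion, which is the restriction of covectors. We then take $\sigma=\pi^\sharp_{\cO,\mathcal E}$.

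The main obstacle is that Proposition~\ref{prop:symp leaf closed} needs this factorization as maps of vector bundles on $Y$, not merely at each point. I would handle this by running the whole construction in families over the (étale-local) universal family on $R^{\cO,s}$. The pointwise complexes glue to relative complexes, their $\HH^1$ are locally free because the ranks are constant by Proposition~\ref{prop:smooth-HiggsO}, and the square commutes by naturality. With that in place, Proposition~\ref{prop:symp leaf closed} gives $d\omega^{\cO}=0$, so $\omega^{\cO}$ is a holomorphic symplectic form.

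Some care is also needed with signs in the pairing: the vertical map in \eqref{e:O serredual map} uses $-\mathrm{id}\otimes\iota$, and this sign must agree with the one in the Bottacin--Markman map. Since the diagram of complexes commutes as displayed, this holds.
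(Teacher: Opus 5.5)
Your proposal is correct and follows essentially the paper's argument: you factor $\pi^\sharp_{\mathcal E}=\HH^1(\mathsf q_2)\circ\pi^\sharp_{\cO,\mathcal E}\circ\HH^1(\mathsf q_1)$, use Lemma~\ref{lem:inj,surj,co} together with the acyclicity of the cokernel of \eqref{e:O serredual map}, and then invoke Proposition~\ref{prop:symp leaf closed}. Your identification of $\HH^1(\mathsf q_1)$ and $\HH^1(\mathsf q_2)$ with restriction of covectors and the tangent inclusion fills in what the paper only calls ``clear from the geometric viewpoint.'' The family-level argument is harmless but not really needed: $\pi^\sharp$ is already a bundle map, so a pointwise factorization through the subbundle $T_Y\subset T_X|_Y$ (and through the quotient $T^*_X|_Y\to T^*_Y$) automatically holds at the level of bundles.
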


\begin{theorem}\label{thm:connected HiggsO}
	If $\Higgs^{\cO,s}(C,D)\neq\emptyset$, then $\Higgs^{\cO,s}(C,D)$ is connected. Moreover, $\Higgs^{\ov\cO,s}(C,D)$ is irreducible and it becomes the closure of $\Higgs^{\cO,s}(C,D)$ in $\Higgs^s(C,D)$, denoted by $\overline{\Higgs^{\cO,s}(C,D)}$. In other words, 
	\[
	\overline{\Higgs^{\cO,s}(C,D)}
	=
	\Higgs^{\ov\cO,s}(C,D).
	\]
\end{theorem}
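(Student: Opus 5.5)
The plan is to compare with the moduli space of $\vxi$-parabolic Higgs bundles via the forgetful map, as the introduction indicates. First, choose data $(\vm,\vxi)$ corresponding to $\cO$, so that $\cO=\cO(\vm,\vxi)$ and \eqref{eq:trace-compatibility} becomes \eqref{eq:Residue condition}. Next, fix generic weights $\va$ as in Assumption~\ref{assmp:genericity}, small enough that the forgetful map
\[
F:\Higgs^{\vxi\text{-par}}(C,D,\vm,\va)\to \Higgs^{\ov\cO,ss}(C,D)
\]
is defined (Lemma~\ref{lem:small-weights}, Proposition~\ref{prop:well-defined-forgetful}). We will use that $\Higgs^{\vxi\text{-par}}(C,D,\vm,\va)$ is smooth and irreducible (Theorem~\ref{thm:symplectic leaf xipar}). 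We will also use the first two parts of Proposition~\ref{prop:main}: the image of $F$ contains $\Higgs^{\ov\cO,s}(C,D)$, and $F$ is an isomorphism over $\Higgs^{\cO,s}(C,D)$.

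Set $U:=F^{-1}(\Higgs^{\ov\cO,s}(C,D))$. It is open in the irreducible space $\Higgs^{\vxi\text{-par}}(C,D,\vm,\va)$, hence irreducible whenever it is nonempty. Since $\Higgs^{\cO,s}(C,D)\neq\emptyset$, $U$ is nonempty, and $F|_U$ surjects onto $\Higgs^{\ov\cO,s}(C,D)$. The continuous image of an irreducible space is irreducible, so $\Higgs^{\ov\cO,s}(C,D)$ is irreducible. By Lemma~\ref{lem:residue-loci-closed-locally-closed}(3), $\Higgs^{\cO,s}(C,D)$ is a nonempty open subset of this irreducible space. It is therefore irreducible, in particular connected, and dense in $\Higgs^{\ov\cO,s}(C,D)$.

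For the closure statement, note that $\Higgs^{\ov\cO,s}(C,D)$ is closed in $\Higgs^s(C,D)$ by Lemma~\ref{lem:residue-loci-closed-locally-closed}(1). Hence the closure of $\Higgs^{\cO,s}(C,D)$ in $\Higgs^s(C,D)$ equals its closure inside $\Higgs^{\ov\cO,s}(C,D)$. By density, this closure is all of $\Higgs^{\ov\cO,s}(C,D)$.

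The main obstacle is the surjectivity of $F$ onto the stable locus, i.e.\ the first part of Proposition~\ref{prop:main}. Given a stable $(E,\Phi)$ with $\res_{p_i}\Phi\in\ov\cO_i$, we must produce flags of type $(\um_i,\uxi_i)$ preserved by the residue, with $\res_{p_i}\Phi$ acting by $\xi_{i,a}$ on the graded pieces. This is the Springer/partial-flag resolution of $\ov\cO_i$: every element of the orbit closure admits such a compatible flag (Appendix~\ref{sec:local-linear-algebra}), by properness of the collapsing map $G\times^P(\text{nilradical shifted by }\uxi_i)\to\ov\cO_i$, whose image is closed and contains $\cO_i$.

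We then need the resulting parabolic Higgs bundle to be $\va$-stable. For sufficiently small weights, $\va$-stability is implied by slope stability of $(E,\Phi)$: the parabolic correction terms are bounded by $\sum|\alpha|\cdot r$, which is smaller than the minimal positive gap $1/r!$ between slopes. This is exactly the content of the small-weight lemma. If that lemma only gives semistability, genericity upgrades it to stability. The isomorphism over $\Higgs^{\cO,s}(C,D)$ follows from the uniqueness of compatible flags for elements of $\cO_i$. Since all of these facts are quoted from earlier results, the argument above reduces to point-set topology.
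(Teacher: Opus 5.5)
Your argument is correct and is essentially the paper's proof. Both use three ingredients: irreducibility of $\Higgs^{\vxi\text{-par}}(C,D,\vm,\va)$, the inclusion $\Higgs^{\ov\cO,s}(C,D)\subseteq\operatorname{Im}(F)$ from Proposition~\ref{prop:main}, and the openness/closedness statements of Lemma~\ref{lem:residue-loci-closed-locally-closed}, which pass irreducibility and density down to $\Higgs^{\cO,s}(C,D)$. The only cosmetic difference is that you map the open preimage $F^{-1}(\Higgs^{\ov\cO,s}(C,D))$ onto $\Higgs^{\ov\cO,s}(C,D)$, whereas the paper views $\Higgs^{\ov\cO,s}(C,D)$ as a nonempty open subset of the irreducible image $\operatorname{Im}(F)$; also, your argument never uses that $F$ is an isomorphism over $\Higgs^{\cO,s}(C,D)$, even though you list it among the inputs.
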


We postpone the proof of Theorem~\ref{thm:connected HiggsO} to the next subsection, since it uses the forgetful morphism from the corresponding $\vec\xi$-parabolic moduli space.

\begin{theorem}\label{thm:orbit-stratum-is-leaf}
	The moduli space $\Higgs^{\cO,s}(C,D)$ is a symplectic leaf of the Bottacin--Markman Poisson structure on $\Higgs^s(C,D)$.
\end{theorem}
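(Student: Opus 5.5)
To prove that $\Higgs^{\cO,s}(C,D)$ is a symplectic leaf, I will check the three defining conditions in turn: smoothness and local closedness, tangency to the characteristic distribution, and maximality together with connectedness.

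\emph{Smoothness and local closedness.} By Lemma~\ref{lem:residue-loci-closed-locally-closed}, $\Higgs^{\cO,s}(C,D)$ is open in the closed subvariety $\Higgs^{\ov\cO,s}(C,D)$ of $\Higgs^s(C,D)$, so it is locally closed. Proposition~\ref{prop:smooth-HiggsO} shows that it is smooth.

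\emph{Tangency.} Corollary~\ref{cor:characteristic-distribution} gives
\[
T_{\mathcal E}\Higgs^{\cO,s}(C,D)=\operatorname{Im}(\pi^\sharp_{\mathcal E})
\]
at every point $\mathcal E\in\Higgs^{\cO,s}(C,D)$. It is deduced from the commutative square relating $\pi^\sharp_{\mathcal E}$ and $\pi^\sharp_{\cO,\mathcal E}$ and from Lemma~\ref{lem:inj,surj,co}: $\HH^1(\mathsf q_1)$ is surjective, $\pi^\sharp_{\cO,\mathcal E}$ is an isomorphism, and $\HH^1(\mathsf q_2)$ is injective. Hence the image of $\pi^\sharp_{\mathcal E}$ is exactly the image of $T_{\mathcal E}\Higgs^{\cO,s}$ under $\HH^1(\mathsf q_2)$.

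\emph{Connectedness.} Theorem~\ref{thm:connected HiggsO} shows that $\Higgs^{\cO,s}(C,D)$ is connected whenever it is non-empty. Together with the two previous steps, this makes it a connected integral submanifold of the characteristic distribution.

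\emph{Maximality.} This is the step that needs an actual argument. Let $L$ be the symplectic leaf through a point $\mathcal E\in\Higgs^{\cO,s}(C,D)$. Then $L$ contains the connected integral manifold $\Higgs^{\cO,s}(C,D)$, since leaves are the maximal connected integral manifolds of the (singular) distribution. I need to rule out $L$ being strictly larger, and I will use the residue map for this.

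On a neighbourhood in $\Higgs^s(C,D)$, trivialize the universal bundle at the points $p_i$ locally, for instance via the Markman level-structure cover or directly on $R^s$. The conjugacy classes of the residues $\res_{p_i}(\Phi)$ then give a well-defined map from $\Higgs^s(C,D)$ to the set of tuples of adjoint orbits, or equivalently to the values of all conjugation-invariant functions of the residues.

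The key claim is that these conjugation-invariant residue functions, e.g.\ $\tr(\res_{p_i}(\Phi)^k)$, are Casimirs, i.e.\ constant along every Hamiltonian vector field. The cleanest infinitesimal version is the following. For any point $\mathcal E'$ with residues $A_i'$, Lemma~\ref{lem:inj,surj,co} applied at $\mathcal E'$ shows that $\operatorname{Im}(\pi^\sharp_{\mathcal E'})$ lies in the image of $\HH^1(C^\bullet_{\cO',\mathcal E'})$, where $\cO'$ is the residue orbit of $\mathcal E'$. This holds because the diagram factors through $C^\bullet_{\cO',\mathcal E'}$ for any orbit: the composite $D(C_{\mathrm{mero}})\to C_{\mathrm{mero}}$ has residue components $\res_{p_i}$ landing in $[A_i',\End(E)_{p_i}]$, since the degree-one term of $D(C_{\mathrm{mero}})$ is $\End(E)\otimes K_C$, which has zero residue. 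So every vector in the characteristic distribution deforms $(E,\Phi)$ while keeping each residue in its own orbit to first order.

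Therefore any connected integral manifold through $\mathcal E$ stays, at every one of its points, tangent to the locus of constant residue orbits. Such a manifold is connected and smooth, and along any path in it the residue orbit type cannot jump, because the tangent vectors are tangent to the orbits. It therefore lies in $\Higgs^{\cO,s}(C,D)$, which gives $L\subset\Higgs^{\cO,s}(C,D)$.

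\textbf{Main obstacle.} The delicate point is turning "tangent to the orbit stratification" into "contained in one stratum". I would handle it by noting that the invariant polynomials of the residues are global regular functions on each integral manifold whose differentials vanish there, so they are constant. Invariant polynomials separate only semisimple parts, however, so to pin down the orbit exactly I would also use that orbit dimension, i.e.\ the rank of $\pi^\sharp$, is constant along a leaf. Among the orbits with given invariants, the one of dimension $\dim\cO_i$ is determined locally, since the orbits in a fibre of the invariant map have distinct closures. This forces the leaf to stay in the stratum of $\cO$.
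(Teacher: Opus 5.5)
Your proposal is correct. The first three steps (local closedness and smoothness, tangency via Lemma~\ref{lem:inj,surj,co} and Corollary~\ref{cor:characteristic-distribution}, connectedness via Theorem~\ref{thm:connected HiggsO}) are the same as in the paper. The difference is in maximality.

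\textbf{The paper's route.} The paper argues globally. Since the tangent spaces agree, $\Higgs^{\cO,s}(C,D)$ is open in the leaf $L$, so it suffices to show it is closed in $L$. Its closure lies in the closed set $\Higgs^{\ov\cO,s}(C,D)$; the paper quotes the equality from Theorem~\ref{thm:connected HiggsO}, but only this inclusion is used. A point of $L$ in a boundary stratum $\cO'\subset\ov\cO\setminus\cO$ would have Poisson rank $2r^2(g-1)+2+\sum_i\dim\cO'_i<\dim L$, which contradicts constancy of rank along a leaf. So the paper only needs the rank drop on $\ov\cO\setminus\cO$. It never examines nearby points with different semisimple parts of the residues, because those lie outside the closed set $\Higgs^{\ov\cO,s}(C,D)$.

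\textbf{Your route.} You use the conjugation-invariant residue functions as Casimirs, together with rank constancy and local orbit geometry, to show that the residue stratum is locally constant on any leaf. This proves more: every symplectic leaf of $\Higgs^s(C,D)$ lies in a single $\Higgs^{\cO',s}(C,D)$. The cost is a local analysis that the paper avoids.

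Two points in that local analysis need to be stated correctly.

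First, rank constancy gives constancy only of $\sum_i\dim\cO'_i$, not of each $\dim\cO'_i$. Moreover, orbits with the same characteristic polynomial and the same dimension need not coincide: the nilpotent orbits of Jordan types $(3,1,1,1)$ and $(2,2,2)$ in $\mathfrak{gl}_6$ both have dimension $18$. So ``the orbit with given invariants of dimension $\dim\cO_i$'' is not well defined, and ``distinct closures'' is not the relevant fact. The correct local statement is this. Only finitely many orbits share a characteristic polynomial, so a small enough neighbourhood of $A_i\in\cO_i$ meets only those orbits with the same invariants whose closure contains $A_i$. Each such orbit is either $\cO_i$ or has strictly larger dimension. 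Hence $\dim\cO'_i\geq\dim\cO_i$ at nearby points of $L$, with equality only if $\cO'_i=\cO_i$. Constancy of the sum then forces $\cO'_i=\cO_i$ for every $i$, which gives local, hence global, constancy of the stratum on the connected $L$.

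Second, the heuristic that ``tangency to the orbits prevents jumping'' is false on its own. The path $t\mapsto t^2N$ in $\mathfrak{gl}_2$, with $N$ nonzero nilpotent, is tangent to the orbit through each of its points yet passes through $0$. So the rank input is indispensable, as you recognised.

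Finally, for the local residue matrices, use the trivialization on $R^s$ (analytically or \'etale locally) rather than Markman's level-structure cover. That cover only sees points whose underlying bundle is stable.
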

\begin{proof}
	Let $L$ be the symplectic leaf through a point of $\Higgs^{\cO,s}(C,D)$. By Corollary~\ref{cor:characteristic-distribution}, the tangent space of $\Higgs^{\cO,s}(C,D)$ agrees with the characteristic distribution at every point. Hence $\Higgs^{\cO,s}(C,D)$ is tangent to the characteristic distribution. Since it is connected by Theorem~\ref{thm:connected HiggsO}, maximality of the symplectic leaf gives $\Higgs^{\cO,s}(C,D)\subset L$. Moreover, this inclusion is open, because both sides have the same tangent space along $\Higgs^{\cO,s}(C,D)$. To show that $\Higgs^{\cO,s}(C,D)=L$, it is enough to prove that $\Higgs^{\cO,s}(C,D)$ is closed in $L$ since $L$ is connected by definition.

By Theorem~\ref{thm:connected HiggsO}, the closure of $\Higgs^{\cO,s}(C,D)$ is $\Higgs^{\ov\cO,s}(C,D)$. If
\[
L\cap\left(\Higgs^{\ov\cO,s}(C,D)\setminus \Higgs^{\cO,s}(C,D)\right)=\emptyset,
\]
then $\Higgs^{\cO,s}(C,D)$ is closed in $L$. Suppose otherwise, and let $\mathcal E$ be an element of this intersection. Then there is a stratum $\cO'\subset \ov\cO\setminus\cO$ such that $\Higgs^{\cO',s}(C,D)$ contains $\mathcal E$. Since $\cO'$ is a boundary stratum, we have $\dim\cO'<\dim\cO$. Therefore
\[
\dim \Higgs^{\cO',s}(C,D)<\dim \Higgs^{\cO,s}(C,D)=\dim L.
\]
By Corollary~\ref{cor:characteristic-distribution}, the rank of the Poisson map at $\mathcal E$ is $\dim \Higgs^{\cO',s}(C,D)$, which is strictly less than $\dim L$. This contradicts the fact that the rank of the Poisson map is constant along the symplectic leaf $L$ and equals $\dim L$. Hence $L$ does not meet the boundary, so $\Higgs^{\cO,s}(C,D)$ is closed in $L$.
\end{proof}

\section{$\vec\xi$-parabolic Higgs bundles}\label{sec:xi-parabolic}
In this section, we discuss the geometry of the moduli space of $\vxi$-parabolic Higgs bundles.
\subsection{$\vec\xi$-parabolic Higgs bundles and their moduli}\label{sec:xi-par moduli}

\begin{definition}\label{def:xi-par}
Fix local flag types $\um_i=(m_{i,1},\dots,m_{i,\ell_i})$ and local eigenvalue data $\uxi_i=(\xi_{i,1},\dots,\xi_{i,\ell_i})$ at each marked point $p_i\in D$ satisfying the residue condition ~\eqref{eq:trace-compatibility}. A parabolic Higgs bundle on $(C,D)$ consists of the following data:
\begin{itemize}
	\item a vector bundle $E$ on $C$ of rank $r$ and degree $d$;
	\item for each $i$, a flag
	\[
	0=E_{p_i}^{\ell_i}\subset E_{p_i}^{\ell_i-1}\subset \cdots \subset E_{p_i}^{0}=E_{p_i},
	\qquad \dim(E_{p_i}^{a-1}/E_{p_i}^{a})=m_{i,a};
	\]
	\item a meromorphic Higgs field $\Phi\in H^0(C,\End(E)\otimes K_C(D))$ such that each residue preserves the corresponding flag.
\end{itemize}
In particular, we say that a parabolic Higgs bundle is a $\vxi$-parabolic Higgs
bundle if its residues satisfy the following condition
	\begin{equation*}\label{eq:xipar}
	\gr_a(\res_{p_i}\Phi)=\xi_{i,a}\id
	\qquad \text{for all }i,a.
	\end{equation*}
\end{definition}
\begin{rem}
The notion of $\vxi$-parabolic Higgs bundles is essentially a special case of parabolic Higgs bundles with fixed residue diagrams in the sense of Simpson \cite{simpson-noncompact} (see Example~\ref{exmp:xipar}). It has also appeared in several closely related forms in the literature; see \cite{BGPM2020,Shu2025DSP,Yae2026StarShaped}.

\end{rem}
\begin{example}
	When $\vec\xi=0$, the residue acts trivially on each graded piece, so one recovers the usual notion of a strongly parabolic Higgs bundle. 
\end{example}

Choose parabolic weights $
0\le \alpha_{i,1}<\alpha_{i,2}<\cdots<\alpha_{i,\ell_i}<1$ for $1 \leq i \leq n$. If $F\subset E$ is a saturated $\Phi$-invariant proper subbundle, the induced flag on $F_{p_i}$ is defined by $F_{p_i}^a:=F_{p_i}\cap E_{p_i}^a.$ Its parabolic degree is
\[
\pardeg_{\vec\alpha}(F):=\deg(F)+\sum_{i=1}^n\sum_{a=1}^{\ell_i}\alpha_{i,a}\dim(F_{p_i}^{a-1}/F_{p_i}^{a}),
\]
and the parabolic slope is
\[
\parmu_{\vec\alpha}(F):=\frac{\pardeg_{\vec\alpha}(F)}{\rk(F)}.
\]
A parabolic Higgs bundle is called $\vec\alpha$-semistable (resp. $\vec\alpha$-stable) if for every proper nonzero $\Phi$-invariant saturated subbundle $F\subset E$ one has
\[
\parmu_{\vec\alpha}(F) \leq \,\parmu_{\vec\alpha}(E) \qquad (\text{resp. }\parmu_{\vec\alpha}(F) < \,\parmu_{\vec\alpha}(E)).
\]
Throughout, we choose $\vec\alpha$ to be generic, so that semistability is
equivalent to stability. We denote by
$\Higgs^{\mathrm{par}}(C,D,\vec m,\vec\alpha)$
(resp. $\Higgs^{\vec\xi\text{-par}}(C,D,\vec m,\vec\alpha)$) the moduli space
of $\vec\alpha$-semistable parabolic Higgs bundles
(resp. $\vec\xi$-parabolic Higgs bundles). Note that $\Higgs^{\vxi\text{-par}}(C,D,\vm,\va)$ is closed in $\Higgs^{\mathrm{par}}(C,D,\vec m,\vec\alpha)$. 

There is a well-defined Hitchin map
\[
h:\Higgs^{\text{par}}(C,D,\vec m,\vec\alpha)
\longrightarrow
A:=\bigoplus_{j=1}^r H^0(C,K_C(D)^{\otimes j})
\]
defined by the coefficients of the characteristic polynomial of the Higgs field $\Phi$. When we restrict $h$ to
$\Higgs^{\vec\xi\text{-par}}(C,D,\vec m,\vec\alpha)$, its image can be described
more precisely using the spectral correspondence of \cite[Section~3]{LeeLee2025}. 

Recall that in \cite[Section 3,4]{LeeLee2025}, we constructed a proper surface $Z_{\vec\xi}$ as an iterative sequence of blow-ups $f:Z_{\vec\xi}\to \mathbb P(K_C(D)\oplus\mathcal O_C)$ over the ruled surface $\mathbb P(K_C(D)\oplus\mathcal O_C)$. To a $\vec\xi$-parabolic Higgs bundle $(E,E_D^\bullet,\Phi)$, we associate a pure one-dimensional sheaf $\mathcal F$ on $Z_{\vec\xi}$. Its support belongs to the linear system $|\Sigma(\vec m)_{\vec\xi}|$, where $\Sigma(\vec m)_{\vec\xi}$ is the divisor class on $Z_{\vec\xi}$ determined by the combinatorial data $(\vec m,\vec\xi)$. Moreover, the support is disjoint from the strict transform of $f^{-1}(D)$ and from the horizontal divisor at infinity. We denote this open surface by $S_{\vec\xi}\subset Z_{\vec\xi}$.

The curves in $|\Sigma(\vec m)_{\vec\xi}|$ satisfying this disjointness condition form an open affine subspace, denoted by $B(\vec m)_{\vec\xi}$, of dimension 
\begin{equation}\label{eq:dim bmxi}
    \dim \bmxi
	=
	r^2(g-1)+1+\frac{1}{2}\sum_{i=1}^n\left(r^2-\sum_{a=1}^{\ell_i}m_{i,a}^2\right).
\end{equation}
There is a natural embedding $B(\vec m)_{\vec\xi}\hookrightarrow A$, obtained by sending a spectral curve to the coefficients of its defining characteristic equation. Thus the Hitchin map factors through $B(\vec m)_{\vec\xi}$, and we write
\begin{equation}\label{eq:Hit xi-par}
h(\vec m)_{\vxi}:\Higgs^{\vec\xi\text{-par}}(C,D,\vec m,\vec\alpha)\to B(\vec m)_{\vec\xi}.
\end{equation}

Furthermore, after choosing an appropriate stability condition on compactly supported pure one-dimensional sheaves on $S_{\vec\xi}$, the spectral correspondence of \cite[Section 4]{LeeLee2025} gives a closed embedding
\[
Q_{\vec\xi}:\Higgs^{\vec\xi\text{-par}}(C,D,\vec m,\vec\alpha)\hookrightarrow \mathcal M_{S_{\vec\xi}},
\]
where $\mathcal M_{S_{\vec\xi}}$ denotes the moduli space of stable compactly supported pure one-dimensional sheaves on $S_{\vec\xi}$ with fixed numerical class $\Sigma(\vec m)_{\vec\xi}$. Under this embedding, the Hitchin map agrees with the Fitting-support map $\operatorname{Supp}:\mathcal M_{S_{\vec\xi}}\to B(\vec m)_{\vec\xi}$. Equivalently, we have the following commutative diagram:
\begin{equation}\label{eq:diag sp}
	    \begin{tikzcd}
				\Higgs^{\xi\text{-par}}(C,D,\vec{m}, \vec{\alpha}) \arrow[r, hook, "Q_{\vec{\xi}}"] \arrow[d, "h"] & \cM_{S_{\vec{\xi}}}  \arrow[d, "\mathrm{Supp}"]\\
				A & B(\vec{m})_{\vec{\xi}} \arrow[l, hook']
			\end{tikzcd}
	\end{equation}
			In particular, over the integral locus $B^{\mathrm{int}}(\vec m)_{\vec\xi} \subset B(\vec m)_{\vec\xi}$ where the corresponding support curve in $S_{\vec{\xi}}$ is integral, the map $Q_{\vec\xi}$ becomes isomorphism by identifying a Hitchin fiber with a compactified Jacobian of the corresponding integral spectral curve in $S_{\vec\xi}$. This leads to the following proposition.

\begin{proposition}\label{prop:xi-par sm.irr}
	Suppose that $B^{\mathrm{int}}(\vec m)_{\vec\xi}$ is nonempty. Then the Hitchin map 
	\[
	h(\vec{m})_{\vxi}:\Higgs^{\vec\xi\text{-par}}(C,D,\vec m,\vec\alpha)\to B(\vec m)_{\vec\xi}
	\]
	is proper and surjective.
\end{proposition}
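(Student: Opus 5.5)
Properness and surjectivity will be handled separately, both through the spectral embedding $Q_{\vxi}$ and the commutative diagram \eqref{eq:diag sp}.

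\emph{Properness.} First I will show that $h(\vm)_{\vxi}$ is proper by the valuative criterion, following the standard Langton-type argument for Higgs moduli. The ambient Hitchin map $h:\Higgs^{\mathrm{par}}(C,D,\vm,\va)\to A$ is proper. This is the parabolic analogue of Nitsure's properness; for generic $\va$ one can reduce it to the usual properness of Hitchin maps via Yokogawa's construction of moduli of parabolic Higgs sheaves. Since $\Higgs^{\vxi\text{-par}}(C,D,\vm,\va)$ is closed in $\Higgs^{\mathrm{par}}(C,D,\vm,\va)$, the restriction $h|:\Higgs^{\vxi\text{-par}}\to A$ is proper. This map factors through the locally closed embedding $B(\vm)_{\vxi}\hookrightarrow A$. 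A proper map factoring through a separated $B(\vm)_{\vxi}$ is proper onto $B(\vm)_{\vxi}$, because for $X\to B\to A$ with $X\to A$ proper and $B\to A$ separated, $X\to B$ is proper. If one prefers to avoid citing parabolic properness, I would instead use $Q_{\vxi}$. The Fitting-support map $\mathrm{Supp}:\cM_{S_{\vxi}}\to B(\vm)_{\vxi}$ is proper, since the support curves are proper and disjoint from the removed divisors, so the sheaves are compactly supported in a family that is proper over the base, and Simpson's moduli of stable sheaves is proper over the Chow/Hilbert base. Then $Q_{\vxi}$ is a closed embedding, so $h(\vm)_{\vxi}=\mathrm{Supp}\circ Q_{\vxi}$ is proper.

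\emph{Surjectivity.} By properness the image is closed, so it suffices to show that the image contains a dense subset of the irreducible affine space $B(\vm)_{\vxi}$. The locus $B^{\mathrm{int}}(\vm)_{\vxi}$ is open; integrality is an open condition in flat families of curves. By hypothesis it is nonempty, hence dense. Over a point $b$ of $B^{\mathrm{int}}$, the fibre of $\mathrm{Supp}$ is the compactified Jacobian of the integral curve $\Sigma_b\subset S_{\vxi}$ of the appropriate degree. This fibre is nonempty; for instance, a line bundle of the right degree on $\Sigma_b$ pushes forward to a pure sheaf that is automatically stable, since $\Sigma_b$ is integral. By the stated fact that $Q_{\vxi}$ is an isomorphism over $B^{\mathrm{int}}$, this sheaf corresponds to a $\vxi$-parabolic Higgs bundle with $h=b$. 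Here one must check that the rank-one torsion-free sheaf yields a $\va$-stable parabolic Higgs bundle. Integrality of the spectral curve forbids any $\Phi$-invariant subbundles, so stability is automatic for any $\va$. Hence $B^{\mathrm{int}}\subset\operatorname{Im}h(\vm)_{\vxi}$, and closedness gives surjectivity.

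The main obstacle is properness, specifically justifying that $\mathrm{Supp}$ (or the parabolic Hitchin map) is proper over the noncompact base $B(\vm)_{\vxi}$. The delicate point is that limits of sheaves supported in the open surface $S_{\vxi}$ stay in $S_{\vxi}$. I would handle this by noting that the limit's support is the limit curve in $B(\vm)_{\vxi}$, which by definition avoids the removed divisors, so a Langton argument on the proper surface $Z_{\vxi}$ produces a limit that lies in $\cM_{S_{\vxi}}$.
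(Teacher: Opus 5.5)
Your proposal is correct and follows the paper's proof. Your second route to properness is exactly the paper's argument: the closed embedding $Q_{\vxi}$ into $\mathcal M_{S_{\vxi}}$ composed with the proper support map, which the paper cites from its earlier work. Your surjectivity argument is also the same as the paper's: density of the nonempty open $B^{\mathrm{int}}(\vm)_{\vxi}$ in the affine space, a rank-one sheaf on each integral $\Sigma_b$ transported through $Q_{\vxi}$, and closedness of the image by properness. Your first route to properness is a valid alternative: properness of the ambient parabolic Hitchin map, closedness of the $\vxi$-parabolic locus, and cancellation against the separated map $B(\vm)_{\vxi}\hookrightarrow A$. It just imports Yokogawa's parabolic properness result in place of the spectral one.
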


\begin{proof}
	 Properness of $h(\vec m)$ follows from the closed embedding into the proper support morphism on $\cM_{S_{\vec\xi}}$ (see \cite[Proposition 4.19]{LeeLee2025}).

	It remains to prove surjectivity. Since $B(\vec m)_{\vec\xi}$ is an affine space, it is irreducible; hence the nonempty open subset $B^{\mathrm{int}}(\vec m)_{\vec\xi}$ is dense. Let $b\in B^{\mathrm{int}}(\vec m)_{\vec\xi}$ and let $\Sigma_b\subset S_{\vec\xi}$ be the corresponding integral spectral curve. Choose a rank-one torsion-free sheaf on $\Sigma_b$ of the numerical class prescribed by the spectral correspondence. Via the isomorphism $Q_{\vec{\xi}}$ over the integral locus, this sheaf determines a point of $\Higgs^{\vec\xi\text{-par}}(C,D,\vec m,\vec\alpha)$ mapping to $b$. Thus the image of $h$ contains $B^{\mathrm{int}}(\vec m)_{\vec\xi}$. Since $h$ is proper, its image is closed; therefore it equals all of $B(\vec m)_{\vec\xi}$.

\end{proof}

In fact, the diagram \eqref{eq:diag sp} and the Hitchin map \eqref{eq:Hit xi-par} naturally sit in a smooth family over the parameter space of $\vec{\xi}$, defined by
\[
\nmd:=\left\{\vec{\xi}=(\uxi_1, \cdots, \uxi_n) \mid \sum_{a,i}m_{i,a}\xi_{i,a}=0 \right\}.\]  In other words, there exists a smooth family of moduli spaces of $\vec{\alpha}$-semistable $\vec{\xi}$-parabolic Higgs bundles
\[
P:\bm{{\m{H}}}(\vec{m}) \to \nmd
\] 
whose fiber at $\vec{\xi}$ is $\Higgs^{\xi\text{-par}}(C,D,\vec{m}, \vec{\alpha})$. For simplicity, we write it as $\mathcal{H}(\vec m)_{\vec \xi}$. Also, we have a family version of diagram \eqref{eq:diag sp},
\[\begin{tikzcd}
         \bm{\m{H}}(\vm) \arrow[r, hook, "\bm{Q}"]\arrow[d] & \bm{\m{M}}(\vm) \arrow[d]\\
        \bm{A}(\vm) & \bm{B}(\vm) \arrow[l, hook', "\iota"]
\end{tikzcd}\]
and a family of Hitchin map ${\bm h}(\vec{m}):\bm{\m{H}}(\vm)\to \bm{B}(\vm)$ over $\nmd$.

Recall that a smooth quasi-projective variety $X$ is called semi-projective if there exists a $\C^*$-action on $X$ such that the fixed point locus $X^{\C^*}$ is proper, and for every $x \in X$, $\lim_{\lambda \to 0}\lambda \cdot x$ exists (see \cite[Definition 1.1]{HauselRodriguez2013})

\begin{lem}\label{lem:semiprojective}
Suppose that $B^{\mathrm{int}}(\vec m)_{\vec\xi}$ is non-empty for every $\vec\xi \in \nmd$. Then $\bm{\m{H}}(\vm)$ is semi-projective.
\end{lem}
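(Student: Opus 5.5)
We equip the total space $\bm{\m{H}}(\vm)$ with a $\C^*$-action that combines scaling the Higgs field with scaling the parameter $\vxi$. For $\lambda\in\C^*$ we set
\[
\lambda\cdot(E,E_D^\bullet,\Phi)=(E,E_D^\bullet,\lambda\Phi).
\]
This sends a $\vxi$-parabolic Higgs bundle to a $\lambda\vxi$-parabolic one, since $\gr_a(\res_{p_i}\lambda\Phi)=\lambda\xi_{i,a}\id$. The condition defining $\nmd$ is linear, so $\nmd$ is preserved. Scaling $\Phi$ does not change the set of $\Phi$-invariant subbundles, so $\va$-stability is preserved. Hence the action is well defined on $\bm{\m{H}}(\vm)$, and the projection $P$ is equivariant for the weight-one scaling action on the vector space $\nmd$. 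On the base $\bm{B}(\vm)\subset \bm{A}(\vm)$ the action covers the weighted action in which the $j$-th characteristic coefficient has weight $j$, together with weight one on $\nmd$. All of these weights are strictly positive, so every point of the base flows to the origin as $\lambda\to 0$, and the origin is the unique fixed point.

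For the existence of limits, we take $x\in\bm{\m{H}}(\vm)$ and consider the orbit map $\C^*\to \bm{\m{H}}(\vm)$, $\lambda\mapsto\lambda\cdot x$. Composed with the family Hitchin map ${\bm h}(\vm)$, it extends to $\C\to\bm{B}(\vm)$, sending $0$ to the origin. To lift the limit we need ${\bm h}(\vm)$ to be proper over $\bm{B}(\vm)$. Fibrewise this is Proposition~\ref{prop:xi-par sm.irr}, which uses the hypothesis that $B^{\mathrm{int}}(\vm)_{\vxi}\neq\emptyset$ for every $\vxi$. For the family, the same argument applies: $\bm{Q}$ is a closed embedding into the relative moduli of sheaves $\bm{\m{M}}(\vm)$, whose support map to $\bm{B}(\vm)$ is proper, so the composite is proper. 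The valuative criterion then extends the orbit map over $0$, so $\lim_{\lambda\to0}\lambda\cdot x$ exists.

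For properness of the fixed locus, any fixed point maps under the equivariant map ${\bm h}(\vm)$ to a fixed point of the base. So $\bm{\m{H}}(\vm)^{\C^*}$ is a closed subset of ${\bm h}(\vm)^{-1}(0)$, which is the nilpotent cone of the strongly parabolic fibre at $\vxi=0$. This fibre is proper by properness of ${\bm h}(\vm)$, so the fixed locus is proper. Smoothness and quasi-projectivity of $\bm{\m{H}}(\vm)$ come from the smooth-family statement recalled before the lemma together with the GIT construction.

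The main obstacle is the relative properness of ${\bm h}(\vm)$ over all of $\bm{B}(\vm)$, not just over each fibre of $P$. The surfaces $S_{\vxi}$ vary with $\vxi$: they are iterated blow-ups at points depending on $\vxi$. So one has to check that the relative spectral correspondence $\bm{Q}$ is a closed embedding into a relative moduli space with proper support map. The plan is to use the family surface $\bm{S}\to\nmd$ from \cite{LeeLee2025} and the relative Simpson moduli of compactly supported sheaves. Properness of the support map there follows because the supports avoid the boundary divisors, so they stay inside a fixed compact region of each fibre of the family of compactified surfaces $Z_{\vxi}$.
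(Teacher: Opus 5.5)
Your proposal is correct and follows the paper's proof essentially step for step. It uses the same $\C^*$-action scaling $\Phi$ (and hence $\vxi$), gets properness of the fixed locus from its inclusion in the central fibre $h(\vm)_{\vec 0}^{-1}(0)$, and gets limits by lifting the base limit through the proper family Hitchin map ${\bm h}(\vm)$. You are more explicit than the paper in flagging that this last step needs properness of ${\bm h}(\vm)$ over all of $\bm B(\vm)$, via the family embedding $\bm Q$ into $\bm{\m M}(\vm)$. The paper simply asserts this relative properness.
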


\begin{proof}
Since the morphism $P:\bm{\m{H}}(\vm) \to \nmd$ is smooth and $\nmd$ is smooth over $\C$, the source $\bm{\m{H}}(\vm)$ is smooth. Consider a $\C^*$-action on $\bm{{\m{H}}}(\vec{m})$ given by $\lambda \cdot (E,E_D^\bullet, \Phi) \mapsto (E,E_D^\bullet, \lambda\Phi)$. We can also define a $\C^*$-action on $\nmd$ by scaling. Then the morphism $P$ is $\C^*$-equivariant. To show that $\bm{\m{H}}(\vm)$ is semi-projective, it is enough to show that ${\bm{\m{H}}(\vm)}^{\C^*}$ is proper and the limit $\lim_{\lambda \to 0} \lambda \cdot (E,\Phi,E^\bullet_D)$ exists for any $(E,E_D^\bullet, \Phi)$. 

First, for properness of ${\bm{\m{H}}(\vm)}^{\C^*}$, note that 
    \[
    \bm{\m{H}}(\vm)^{\C^*}=  \m{H}(\vm)_{\vec{0}}^{\C^*}\subset h(\vm)^{-1}_{\vec{0}}(0)\subset \m{H}(\vm)_{\vec{0}} 
    \]
where $h(\vm)_{\vec{0}}: \m{H}(\vm)_{\vec{0}}\to B(\vm)_{\vec{0}}$ is the restriction map over $\vec{0} \in \nm$ and $0\in B(\vm)_{\vec{0}}$ is the origin of the vector space $B(\vm)_{\vec{0}}$. By the properness of $h(\vm)_{\vec{0}}$, the closed subset $\m{H}(\vm)_{\vec{0}}^{\C^*}$ is proper and so is $\bm{\m{H}}(\vm)^{\C^*}$.

For the existence of limit, let $(E,\Phi,E^\bullet_D)\in \mathcal{H}(\vm)_{\vxi}$. Consider the image $L^\circ$ of 
        \[ \C^*\times \{(E,E^\bullet_D, \Phi)\} \hookrightarrow \C^*\times \bm{\m{H}}(\vm)\to \bm{\m {H}}(\vm)  \]
        Clearly, $h(\vm)(L^\circ)\subset \bm{B}(\vm)$ has a limit at $0\in B(\vm)_{\vec{0}}\subset \bm{B}(\vm).$ Since $h(\vm)$ is proper, it follows that $L^\circ $ also has a limit i.e. $\lim_{\lambda \to 0} \lambda \cdot (E,\Phi,E^\bullet_D)\in \m{H}(\vm)_0$ exists (cf. \cite[Theorem 5.2]{Yokogawa-infinitesimal})
    \end{proof}

Using Lemma \ref{lem:semiprojective}, we prove the following connectedness result. 

\begin{proposition}\label{prop:connected}
Under the same assumption, the moduli space $
\Higgs^{\vec\xi\text{-par}}(C,D,\vec m,\vec\alpha)$ is connected. 
\end{proposition}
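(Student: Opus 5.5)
The plan is to use the semi-projectivity of the total family $\bm{\m{H}}(\vm)$ from Lemma~\ref{lem:semiprojective} together with the smoothness of $P:\bm{\m{H}}(\vm)\to\nmd$. For a smooth semi-projective variety, the Bialynicki-Birula decomposition shows that $X$ retracts $\C^*$-equivariantly onto the core (the union of upward flows of the fixed locus). More precisely, $X$ is the union of the attracting sets $\{x:\lim_{\lambda\to 0}\lambda x\in F_j\}$, where $F_j$ runs over the components of $X^{\C^*}$. Hausel--Rodriguez-Villegas \cite{HauselRodriguez2013} show more: if $X\to V$ is a $\C^*$-equivariant smooth morphism to a vector space with positive-weight action, then all fibers are diffeomorphic, and their cohomology is isomorphic to that of the zero fiber. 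In particular, $H^0(\mathcal H(\vm)_{\vxi})\cong H^0(\mathcal H(\vm)_{\vec 0})$.

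I would therefore reduce connectedness of $\Higgs^{\vxi\text{-par}}$ for arbitrary $\vxi$ to the case $\vxi=\vec 0$, the strongly parabolic case. Here $\C^*$ acts on $\nmd$ with weight one, and $P$ is smooth and equivariant. I would invoke \cite[Corollary 1.3.2]{HauselRodriguez2013}, or reprove it, to get that the fibers $\mathcal H(\vm)_{\vxi}$ all have the same number of connected components.

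It then remains to show that $\mathcal H(\vm)_{\vec 0}$ is connected. The fiber $\mathcal H(\vm)_{\vec 0}$ is itself smooth and semi-projective under the scaling action on $\Phi$. By the Bialynicki-Birula retraction it deformation retracts onto the nilpotent cone $h(\vm)_{\vec 0}^{-1}(0)$, so it suffices to show this cone is connected. I would argue this through the Hitchin map instead. By Proposition~\ref{prop:xi-par sm.irr}, $h(\vm)_{\vec0}$ is proper and surjective onto the affine space $B(\vm)_{\vec 0}$. Over the dense open locus of smooth spectral curves, the fibers are Jacobians (torsors), hence connected. Take the Stein factorization $\mathcal H\to B'\to B$: here $B'\to B$ is finite, and it is birational since the generic fiber is connected. $B'$ is normal because $\mathcal H$ is smooth, hence normal, and $B'=\operatorname{Spec}$ of the pushforward of $\cO_{\mathcal H}$. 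A finite birational morphism onto the normal variety $B$ is an isomorphism, so all fibers of $h$ are connected. Since $B$ is connected and every fiber is connected, the properness of $h$ gives that $\mathcal H(\vm)_{\vec 0}$ is connected. In fact the same argument applies directly for each $\vxi$, provided the generic spectral curve in $B(\vm)_{\vxi}$ is smooth (which \cite{LeeLee2025} gives), possibly bypassing the family altogether.

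The main obstacle is justifying that the generic fiber is connected, i.e.\ that the locus of smooth curves in $B(\vm)_{\vxi}$ is nonempty and that the Hitchin fiber there is a single Jacobian rather than several components. The hypothesis $B^{\mathrm{int}}\ne\emptyset$ and the isomorphism $Q_{\vxi}$ over the integral locus handle the second part: compactified Jacobians of integral curves with fixed degree are irreducible. For the first part, I would fall back on the family argument, using the fact that for $\vxi=\vec 0$ the generic smoothness is classical \cite{ScheinostSchotten95,Suwangwen2022}.
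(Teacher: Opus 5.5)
Your first step, reducing to $\vxi=\vec 0$ through the $\C^*$-equivariant smooth family $P:\bm{\mathcal H}(\vm)\to\nmd$ and comparing the cohomology of its fibers, is exactly what the paper does; the paper applies \cite[Theorem 7.2.1]{HLRV2011Ari} along lines through the origin of $\nmd$. For the central case the paper simply invokes the argument of \cite[Theorem 5.2]{Yokogawa-infinitesimal}. You instead use the Stein factorization of $h(\vm)_{\vec 0}$, and that step has a genuine gap.

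Connectedness of the generic fiber only tells you that the part of $B'$ over a dense open subset of $B$ maps isomorphically onto it. It says nothing about a connected component of $\mathcal H(\vm)_{\vec 0}$ whose image under the proper map $h$ is a proper closed subset of $B(\vm)_{\vec 0}$. Such a component gives a component of $B'$ lying over that closed subset. Then $B'\to B$ is finite but not birational, and Zariski's main theorem does not apply. Compare $\mathbb{A}^1\sqcup\{\mathrm{pt}\}\to\mathbb{A}^1$: it is proper and surjective, with smooth source, normal target and connected generic fiber, yet the fiber over $0$ is disconnected. The $\C^*$-action does not exclude this either, since such a component would still be $\C^*$-stable and meet $h^{-1}(0)$. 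Ruling out these components is the whole content of the connectedness statement. So the obstacle you single out at the end, connectedness of the fibers over $B^{\mathrm{int}}$, is not the real one.

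The missing input is that every connected component of $\mathcal H(\vm)_{\vxi}$ dominates $B(\vm)_{\vxi}$. You can get this in either of two ways:
\begin{enumerate}
\item From flatness of $h(\vm)_{\vxi}$ (Lemma~\ref{l:flatness xi-par Hitchin}, whose proof does not use connectedness), since flat morphisms of finite type are open.
\item By a dimension count. $\mathcal H(\vm)_{\vxi}$ is smooth of pure dimension $2\dim B(\vm)_{\vxi}$ by Lemma~\ref{lem:dim}. Every Hitchin fiber has dimension at most $\dim B(\vm)_{\vxi}$, by upper semicontinuity along $\C^*$-orbits together with the central-fiber computation of \cite{Suwangwen2022}. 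A component with lower-dimensional image would therefore have fibers that are too large.
\end{enumerate}

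Once you have this, the Stein factorization is unnecessary. Each component is closed, dominant and proper over $B(\vm)_{\vxi}$, so it surjects and meets the fiber over a point of $B^{\mathrm{int}}(\vm)_{\vxi}$. That fiber is a compactified Jacobian of an integral curve with planar singularities on the smooth surface $S_{\vxi}$, hence irreducible, so there is only one component. This argument works for every $\vxi$ directly, which makes the reduction to $\vxi=\vec 0$ optional. The price, compared with the paper, is that you rely on the fiber-dimension bound rather than on Yokogawa's argument.
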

\begin{proof}
    Since the morphism $P:\bm{\m{H}}(\vm)\to \nmd$ is $\C^*$-equivariant with positive weights on the base, we can apply the results in \cite[Theorem 7.2.1]{HLRV2011Ari} for any complex line in $\nmd$ passing through the origin. This implies that every fiber has the same cohomology. In particular $H^0(\m{H}(\vec{m})_0) \cong H^0(\m{H}(\vec{m})_{\vec{\xi}})$. By the same argument used in \cite[Theorem 5.2]{Yokogawa-infinitesimal}, we get $\m{H}(\vec{m})_0$ is connected. Therefore, the conclusion follows. 
\end{proof}
For later use, we record the flatness of the Hitchin map.

\begin{lem}\label{l:flatness xi-par Hitchin}
	With the notation as above, the morphism
	\[
	{\bm h}(\vec m):\bm{\m H}(\vm)\to \bm B(\vm)
	\]
	is flat. In particular, for every $\vxi\in \nm$, the induced Hitchin map
	\[
	h(\vec m)_{\vxi}:
	\Higgs^{\vec\xi\text{-par}}(C,D,\vec m,\vec\alpha)
	\to
	B(\vec m)_{\vec\xi}
	\]
	is flat.
\end{lem}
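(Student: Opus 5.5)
We will use miracle flatness: a morphism between smooth varieties (regular schemes) of the same relative dimension, with all fibers equidimensional of the expected dimension, is flat. Both $\bm{\m H}(\vm)$ and $\bm B(\vm)$ are smooth. The source is smooth because $P:\bm{\m H}(\vm)\to\nmd$ is smooth and $\nmd$ is a vector space (as in Lemma~\ref{lem:semiprojective}). The target $\bm B(\vm)$ is a family of affine spaces $B(\vm)_{\vxi}$ over $\nmd$, and we will argue that it is an affine bundle, hence smooth. The map ${\bm h}(\vm)$ commutes with the projections to $\nmd$. So it suffices to show that every fiber ${\bm h}(\vm)^{-1}(b)=h(\vm)_{\vxi}^{-1}(b)$ has pure dimension
\[
\dim \bm{\m H}(\vm)-\dim \bm B(\vm)=\dim\Higgs^{\vxi\text{-par}}(C,D,\vm,\va)-\dim B(\vm)_{\vxi}.
\]
Since $\Higgs^{\vxi\text{-par}}$ is holomorphic symplectic of dimension $2\dim B(\vm)_{\vxi}$ (compare \eqref{eq:dim bmxi} with the dimension count in Proposition~\ref{prop:smooth-HiggsO}), this target fiber dimension is $\dim B(\vm)_{\vxi}$. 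Once flatness of ${\bm h}(\vm)$ is known, flatness of $h(\vm)_{\vxi}$ follows by base change along $B(\vm)_{\vxi}\hookrightarrow \bm B(\vm)$, because $h(\vm)_{\vxi}$ is the fiber product over $\nmd$.

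The key step is the fiber dimension bound. Upper semicontinuity of fiber dimension, together with surjectivity (Proposition~\ref{prop:xi-par sm.irr}), gives that every component of every fiber has dimension at least $\dim B(\vm)_{\vxi}$. For the reverse inequality we use the $\C^*$-action of Lemma~\ref{lem:semiprojective}. It scales $\Phi$ and $\vxi$ and acts on $\bm B(\vm)$ with positive weights contracting everything to $0\in B(\vm)_{\vec 0}$. Since ${\bm h}(\vm)$ is equivariant and the locus where fiber dimension is at least $k$ is closed and $\C^*$-stable, any point $b$ with a large fiber has a large fiber at the limit point $0$ as well. So it suffices to bound the dimension of the nilpotent cone $h(\vm)_{\vec 0}^{-1}(0)$ in the strongly parabolic moduli space.

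For that bound we will show that $h(\vm)_{\vec 0}^{-1}(0)$ is isotropic. We will use the standard argument (Laumon/Ginzburg style, as in \cite{Suwangwen2022}): the nilpotent cone is $\C^*$-stable and proper. On each attracting cell of a fixed component, the symplectic form has weight $1$ under the action, so $\lambda^*\omega=\lambda\omega$. Restricting to the $\C^*$-stable proper subvariety and taking the limit $\lambda\to0$ forces $\omega|_{T}=0$ on tangent spaces of its smooth locus. Hence $\dim h^{-1}(0)\le \tfrac12\dim\Higgs^{\vxi\text{-par}}=\dim B(\vm)_{\vec 0}$. Alternatively, we could cite the Lagrangian nilpotent cone result of \cite{ScheinostSchotten95,Suwangwen2022} for strongly parabolic Higgs bundles directly.

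The main obstacle is this isotropy/dimension bound at $\vxi=\vec 0$, including the verification that the symplectic form on the $\vxi$-parabolic moduli has weight one. A secondary point requiring care is the smoothness of the total base $\bm B(\vm)$ over $\nmd$, i.e.\ that the affine spaces $B(\vm)_{\vxi}$ glue to an affine bundle. We expect this from the description of $B(\vm)_{\vxi}$ as characteristic-polynomial coefficients satisfying residue conditions that are affine-linear in $\vxi$.
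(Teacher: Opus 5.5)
Your proposal follows essentially the same route as the paper. It uses miracle flatness for the smooth $\bm{\m H}(\vm)$ over $\bm B(\vm)$, the general lower bound on fiber dimension, and $\C^*$-equivariance with upper semicontinuity to reduce to the central fiber over $(0,\vec 0)$; then the expected dimension of that central fiber and base change for $h(\vm)_{\vxi}$. The paper takes the central-fiber dimension from \cite[Theorem~6.8]{Suwangwen2022}, i.e.\ your ``alternative'' citation, rather than reproving it.

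If you want the self-contained isotropy argument instead, the limit must be taken with $\lambda\to\infty$, not $\lambda\to 0$. Use the cells of points whose $\lambda\to\infty$ limit exists: these cells make up $h(\vm)_{\vec 0}^{-1}(0)$, and at the fixed points their tangent weights are $\le 0$, so a weight-one $\omega$ vanishes on them. A $\lambda\to 0$ argument cannot work, since every point of the moduli space has a $\lambda\to 0$ limit.
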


\begin{proof}
	The second statement follows from the first by base change along the inclusion
	$B(\vec m)_{\vec\xi}\subset \bm B(\vm)$. Thus we prove the flatness of
	$\bm h(\vm)$.

	We use miracle flatness. Since $\bm{\m H}(\vm)$ is smooth and $\bm B(\vm)$ is
	an affine space, it is enough to show that every fiber of $\bm h(\vm)$ has the
	expected dimension. By the $\C^*$-equivariance of $\bm h(\vm)$, the fibers over
	points in the same $\C^*$-orbit are isomorphic. Moreover, the $\C^*$-orbit closure of
	any point $(b,\vxi)\in \bm B(\vm)=\bmxi\times \nm$ contains $(0,\vec 0)$.
	Therefore, by upper semicontinuity of fiber dimension,
	\[
	\dim \bm h(\vm)^{-1}(b,\vxi)
	\leq
	\dim \bm h(\vm)^{-1}(0,\vec 0).
	\]
	By \cite[Theorem~6.8]{Suwangwen2022}, the central fiber has the expected
	dimension:
	\[
	\dim \bm h(\vm)^{-1}(0,\vec 0)
	=
	\dim \m H(\vm)_0-\dim B(\vm)_0.
	\]
	On the other hand, by the general lower bound for fiber dimensions,
	\[
	\dim \bm h(\vm)^{-1}(b,\vxi)=\dim h(\vm)_{\vxi}^{-1}(b)
	\geq
	\dim \m H(\vm)_{\vxi}-\dim B(\vm)_{\vxi}.
	\]
	By Lemma~\ref{lem:dim} and \eqref{eq:dim bmxi}, the dimension formula of $\bmxi$, the right-hand side is
	independent of $\vxi$ and equals $
	\dim \m H(\vm)_0-\dim B(\vm)_0$. Hence every fiber of $\bm h(\vm)$ has the expected dimension, and the result follows from the miracle flatness. 
\end{proof}

\subsection{The deformation theory}\label{sec:xi-par deformation}

We study the deformation-theoretic construction of the holomorphic symplectic form on
$\Higgs^{\vec\xi\text{-par}}(C,D,\vec m,\vec\alpha)$; see \cite{biswasramanan,Bottacin1995symplectic,Yokogawa-infinitesimal, logaresMartens}.

Let $\mathcal E=(E,E_D^\bullet,\Phi)$ be a stable $\vec\xi$-parabolic Higgs bundle. Define
\[
\begin{aligned}
    \PEnd(\mathcal E)&:=\{f\in \End(E)\mid f_{p_i}(E_{p_i}^a)\subset E_{p_i}^a\ \text{for all }a,i\}, \\
    \SPEnd(\mathcal E)&:=\{f\in \End(E)\mid f_{p_i}(E_{p_i}^{a-1})\subset E_{p_i}^{a}\ \text{for all }a,i\}.
\end{aligned}
\]
Consider the two-term complex
\[
C_{\vxi\text{-par},\mathcal E}^\bullet:
\qquad
\PEnd(\mathcal E)
\xrightarrow{\ [\Phi,\cdot]\ }
\SPEnd(\mathcal E)\otimes K_C(D),
\]
placed in degrees $0,1$.

\begin{lemma}\label{lem:tangent-hypercoh}
	There is a canonical identification
	$T_{\mathcal E}\Higgs^{\vec\xi\text{-par}}(C,D,\vec m,\vec\alpha)\cong \HH^1(C_{\vxi\text{-par},\mathcal E}^\bullet)$.
\end{lemma}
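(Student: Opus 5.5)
We will argue with \v Cech cocycles, in the same way as Proposition~\ref{prop:tangent-CO}. Fix an affine open cover $\mathfrak U=(U_\alpha)$ of $C$ such that each $p_i$ lies in exactly one $U_\alpha$. Over each $U_\alpha$, trivialize $E$ so that the flags $E^\bullet_{p_i}$ become standard. A first-order deformation of $\mathcal E$ over $\C[\varepsilon]/(\varepsilon^2)$ is then described by three pieces of data:
\begin{itemize}
\item local gluing data $u_{\alpha\beta}$, a \v Cech $1$-cochain;
\item a variation of the flag at each $p_i$;
\item local Higgs field variations $v_\alpha$, a $0$-cochain.
\end{itemize}

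First we absorb the flag deformation into the gluing. The flag variety is the homogeneous space $GL(E_{p_i})/P_i$. Hence any first-order deformation of the flag is induced by a local infinitesimal automorphism $\End(E)|_{U_\alpha}$, and this automorphism can be absorbed into a change of local trivialization. After that change we may assume the flags are constant in every chart. The remaining gauge freedom consists of automorphisms preserving the flags. Consequently the $u_{\alpha\beta}$ must be sections of $\PEnd(\mathcal E)$, and the coboundaries are taken in $\PEnd(\mathcal E)$. This is the standard description of deformations of a quasi-parabolic bundle by $H^1(\PEnd(\mathcal E))$.

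Second, we determine which Higgs variations are allowed. The deformed Higgs field $\Phi+\varepsilon v_\alpha$ must satisfy two conditions:
\begin{itemize}
\item its residue at $p_i$ must preserve the constant flag, so $\res_{p_i}(v_\alpha)$ lies in the parabolic endomorphisms;
\item its residue must act on $\gr_a$ by the \emph{fixed} scalar $\xi_{i,a}$. Since $\gr_a(\res_{p_i}\Phi)=\xi_{i,a}\id$ already, the variation must satisfy $\gr_a(\res_{p_i}v_\alpha)=0$.
\end{itemize}
Together these say that $\res_{p_i}(v_\alpha)$ is strongly parabolic, i.e.\ $v_\alpha\in\SPEnd(\mathcal E)\otimes K_C(D)$. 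Indeed, the subsheaf of $\End(E)\otimes K_C(D)$ whose residues are strictly flag-decreasing is exactly $\SPEnd(\mathcal E)\otimes K_C(D)$. The compatibility between the gluing and the Higgs field reads $\delta v=[\Phi,u]$ and $\delta u=0$.

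We must also check that $[\Phi,\cdot]$ maps $\PEnd(\mathcal E)$ into $\SPEnd(\mathcal E)\otimes K_C(D)$. Take $f\in\PEnd(\mathcal E)$. The residue $\res_{p_i}[\Phi,f]=[\res_{p_i}\Phi,f_{p_i}]$ preserves the flag, and on each $\gr_a$ it equals $[\xi_{i,a}\id,\gr_a f]=0$, so it is strongly parabolic. Hence the two-term complex is well defined. Isomorphic deformations differ by the coboundary of a $0$-cochain $g$ in $\PEnd(\mathcal E)$, which acts by $(u,v)\mapsto(u+\delta g,\,v+[\Phi,g])$. Therefore isomorphism classes of first-order deformations are $\HH^1(C^\bullet_{\vxi\text{-par},\mathcal E})$.

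To identify this with the tangent space of the coarse moduli space, we use stability of $\mathcal E$. Its automorphisms are scalars, and these act trivially. By the smoothness result \cite[Theorem 2.6]{LeeLee2024}, or simply because the moduli space is a geometric quotient of the stable locus with scalar stabilizers, the deformation functor is pro-represented by the completed local ring. This yields the canonical identification of $T_{\mathcal E}$ with $\HH^1$.

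The main obstacle is the first step: justifying cleanly that flag deformations can be gauged away. The key point is that a deformation of the pair (bundle, flag) is controlled by $\PEnd$ rather than by $\End$ together with a separate flag term. We would prove this through the short exact sequence $0\to\PEnd(\mathcal E)\to\End(E)\to\bigoplus_i\End(E_{p_i})/\mathfrak p_i\to0$, which matches the tangent space of the flag variety with the cokernel. We would then compare the long exact sequences, or equivalently apply a five-lemma argument on deformation functors.
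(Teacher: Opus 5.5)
Your proposal is correct and follows the same route as the paper. Both use a \v Cech deformation computation: flag-preserving infinitesimal automorphisms give $\PEnd(\mathcal E)$ in degree $0$, and the fixed graded residues force the Higgs variations into $\SPEnd(\mathcal E)\otimes K_C(D)$. You also supply details that the paper leaves to the standard reference \cite[Section 5]{Bottacin1995symplectic}: gauging away the flag deformations via $0\to\PEnd(\mathcal E)\to\End(E)\to\bigoplus_i\End(E_{p_i})/\mathfrak p_i\to0$, checking that $[\Phi,\cdot]$ lands in $\SPEnd(\mathcal E)\otimes K_C(D)$, and using stability to identify the deformation space with the tangent space of the moduli space.
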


\begin{proof}
	An infinitesimal deformation consists of a deformation of the bundle and the quasi-parabolic flags, together with a deformation $\Phi+\varepsilon\dot\Phi$ of the Higgs field preserving the deformed flags and the fixed graded residue eigenvalues. Infinitesimal automorphisms preserving the flags are sections of $\PEnd(\mathcal E)$, and their infinitesimal action on Higgs fields is $f\mapsto[\Phi,f]$. Since the graded residue eigenvalues are fixed, the induced infinitesimal deformation on each graded piece vanishes; equivalently, $\dot\Phi$ lies in $\SPEnd(\mathcal E)\otimes K_C(D)$. The standard \v Cech deformation calculation (see e.g. \cite[Section 5]{Bottacin1995symplectic}) therefore gives the stated identification.
\end{proof}

The trace pairing induces a natural sheaf morphism
\[
\PEnd(\mathcal E)\otimes \SPEnd(\mathcal E)\to \mathcal O_C(-D),
\qquad
(f,g)\longmapsto \mathrm{tr}(fg).
\]
which is nondegenerate. Thus we have canonical identifications
$\PEnd(\mathcal E)^\vee \cong \SPEnd(\mathcal E)\otimes \mathcal O_C(D)$ and
$\SPEnd(\mathcal E)^\vee \cong \PEnd(\mathcal E)\otimes \mathcal O_C(D)$, where $(-)^\vee$ denotes the $\mathcal O_C$-linear dual. Then the Serre-dual complex of $C^\bullet_{\vxi\text{-par}}$, $D(C^\bullet_{\vxi\text{-par},\mathcal E})=(C^\bullet_{\vxi\text{-par},\mathcal E})^\vee \otimes K_C[-1]$, is given by 
\[
D(C^\bullet_{\vxi\text{-par},\mathcal E}):
\qquad
\PEnd(\mathcal E)
\xrightarrow{\ -[\Phi,\cdot]\ }
\SPEnd(\mathcal E)\otimes K_C(D),
\]
placed in degrees $0,1$. 
\begin{lemma}\label{lem:dim}
	The dimension of $\Higgs^{\vec\xi\text{-par}}(C,D,\vec m,\vec\alpha)$ is
	\[
	\dim \Higgs^{\vec\xi\text{-par}}(C,D,\vec m,\vec\alpha)
	=
	2r^2(g-1)+2+\sum_{i=1}^n\left(r^2-\sum_{a=1}^{\ell_i}m_{i,a}^2\right).
	\]
	Equivalently,
	\[
	\dim \Higgs^{\vec\xi\text{-par}}(C,D,\vec m,\vec\alpha)
	=
	2r^2(g-1)+2+\sum_{i=1}^n\dim\cO_i,
	\]
	where $\cO_i=\cO(\um_i,\uxi_i)$.
\end{lemma}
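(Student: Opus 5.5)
We compute the dimension as $\dim \HH^1(C^\bullet_{\vxi\text{-par},\mathcal E})$ via Lemma~\ref{lem:tangent-hypercoh}, using the same Euler-characteristic argument as in Proposition~\ref{prop:smooth-HiggsO}. Smoothness is already known from \cite[Theorem 2.6]{LeeLee2024}, so it suffices to compute $\HH^0$, $\HH^2$ and $\chi$ of the deformation complex.

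\emph{Step 1: the outer groups.} Since $\mathcal E$ is stable, $\HH^0(C^\bullet_{\vxi\text{-par},\mathcal E})=\C\cdot\id_E$: these are global parabolic endomorphisms commuting with $\Phi$, which are scalar by stability. By Serre duality, $\HH^2(C^\bullet_{\vxi\text{-par},\mathcal E})\cong \HH^0(D(C^\bullet_{\vxi\text{-par},\mathcal E}))^\vee$. The Serre-dual complex computed above has the same shape $\PEnd\to\SPEnd\otimes K_C(D)$ with differential $-[\Phi,\cdot]$, so its $\HH^0$ is again $\C\cdot \id_E$. Hence
\[
\dim \HH^1 = 2-\chi(C^\bullet_{\vxi\text{-par},\mathcal E}).
\]

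\emph{Step 2: Euler characteristic.} The quotient $\End(E)/\PEnd(\mathcal E)$ is a skyscraper at $D$, and its fibre at $p_i$ is $\mathfrak{gl}_r/\mathfrak p_i$, where $\mathfrak p_i$ is the parabolic subalgebra of type $\um_i$. This has dimension $\tfrac12\bigl(r^2-\sum_a m_{i,a}^2\bigr)=\dim(\text{nilradical }\mathfrak n_i)$. Similarly, $\SPEnd(\mathcal E)$ has colength $r^2-\tfrac12\bigl(r^2-\sum_a m_{i,a}^2\bigr)$ at $p_i$, because its fibre is $\mathfrak n_i$. By Riemann--Roch,
\[
\chi(\PEnd)=r^2(1-g)-\tfrac12\sum_i\Bigl(r^2-\sum_a m_{i,a}^2\Bigr),
\]
and
\[
\chi(\SPEnd\otimes K_C(D))=r^2(g-1)+r^2n-\sum_i\Bigl(r^2-\tfrac12\bigl(r^2-\textstyle\sum_a m_{i,a}^2\bigr)\Bigr).
\]
Subtracting gives
\[
\chi(C^\bullet)=2r^2(1-g)-\sum_i\Bigl(r^2-\sum_a m_{i,a}^2\Bigr),
\]
which yields the first formula.

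\emph{Step 3: the orbit dimension.} Let $A\in\cO(\um_i,\uxi_i)$. By the correspondence of Appendix~\ref{sec:local-linear-algebra}, $A$ has a unique compatible flag of type $(\um_i,\uxi_i)$. Hence $\cO_i\cong G\times^{P_i}(\text{the open dense subset of }\xi+\mathfrak n_i\text{ lying in }\cO_i)$, which is a Richardson-type induced orbit. So
\[
\dim\cO_i=\dim G/P_i+\dim\mathfrak n_i=2\dim\mathfrak n_i=r^2-\sum_a m_{i,a}^2.
\]
Equivalently, the generalized Springer map $G\times^{P_i}(\xi+\mathfrak n_i)\to\overline{\cO_i}$ is birational, since the flag is unique over $\cO_i$.

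The main obstacle is this last step, namely justifying that $\cO_i\cap(\xi+\mathfrak n_i)$ is dense in $\xi+\mathfrak n_i$ so that the dimension count is exact. I would first try to cite the appendix's one-to-one correspondence together with Lusztig--Spaltenstein induction: the orbit $\cO(\um_i,\uxi_i)$ is the induced orbit $\mathrm{Ind}_{\mathfrak l_i}^{\mathfrak g}(\xi)$, which has dimension $\dim G-\dim L_i$.
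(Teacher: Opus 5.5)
Your proposal is correct and follows the paper's route. The paper also gets $\HH^0\cong\HH^2\cong\C$ from stability and the self-duality of $C^\bullet_{\vxi\text{-par},\mathcal E}$, then computes $\dim\HH^1=2-\chi$ by Riemann--Roch using the colengths of $\PEnd(\mathcal E)$ and $\SPEnd(\mathcal E)$ at $D$; for the last step it simply quotes $\dim\cO_i=r^2-\sum_a m_{i,a}^2$ as standard (equivalently $\dim Z(A)=\sum_a m_{i,a}^2$, as in Lemma~\ref{lem:local-linear-algebra-injective}).

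The density you flag in Step 3 is not a real obstacle. By Theorem~\ref{thm:orbit-correspondence}, $\mu_{\um,\uxi}^{-1}(\cO_i)$ is a nonempty open subset of the irreducible $\widetilde X_{\um,\uxi}(V)\cong G\times^{P_i}(\xi+\mathfrak n_i)$, and $\mu_{\um,\uxi}$ is injective on it. Hence $\dim\cO_i=2\dim\mathfrak n_i$ directly, with no need for Lusztig--Spaltenstein induction.
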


\begin{proof}
	The argument goes parallel to the proof of Proposition \ref{prop:smooth-HiggsO}. By Lemma \ref{lem:tangent-hypercoh}, the tangent space is $\HH^1(C_{\text{par},\mathcal E}^\bullet)$. Since $\mathcal{E}$ is $\va$-stable, $\HH^0(C_{\text{par},\mathcal E}^\bullet)\cong \C$, and Serre duality gives $\HH^2(C_{\text{par},\mathcal E}^\bullet)\cong \C$. Hence
	$\dim \HH^1(C_{\text{par},\mathcal E}^\bullet)=2-\chi(C_{\text{par},\mathcal E}^\bullet)$. At each marked point $p_i$,
	\[
	\dim\PEnd(\mathcal E)_{p_i}=\sum_{a\leq b}m_{i,a}m_{i,b},
	\qquad
	\dim\SPEnd(\mathcal E)_{p_i}=\sum_{a<b}m_{i,a}m_{i,b}.
	\]
	Using the exact sequences defining $\PEnd(\mathcal E)$ and $\SPEnd(\mathcal E)$ as subsheaves of $\End(E)$, together with Riemann--Roch for $\End(E)$ and $\End(E)\otimes K_C(D)$, one obtains
	\[
	\dim \HH^1(C_{\text{par},\mathcal E}^\bullet)
	=
	2r^2(g-1)+2+\sum_{i=1}^n\left(r^2-\sum_{a=1}^{\ell_i}m_{i,a}^2\right).
	\]
	The final expression follows from the standard formula
	$\dim \cO_i=r^2-\sum_a m_{i,a}^2$ for the conjugacy class associated to $(\um_i,\uxi_i)$.
\end{proof}

Next, we describe a symplectic form on $\Higgs^{\vxi\text{-par}}(C,D,\vm,\va)$. Serre duality and Lemma \ref{lem:tangent-hypercoh} imply that $T^*_\mathcal E\Higgs^{\vxi\text{-par}}(C,D,\vm,\va) \cong \mathbb{H}^1(D(C^\bullet_{\vxi\text{-par},\mathcal E}))$. Moreover, there is a canonical morphism of complexes $D(C^\bullet_{\vxi\text{-par},\mathcal E}) \to C^\bullet_{\vxi\text{-par},\mathcal E}$ given by the identity up to a sign:
\begin{equation}\label{e:xipar serredual map}
\begin{tikzcd}
    \PEnd(\mathcal E)
    \arrow[r, "{-[\Phi,\cdot]}"]
    \arrow[d, hook, "id"]
    &
    \SPEnd(\mathcal E)\otimes K_C(D)
    \arrow[d, hook, "-id \otimes id_{K_C(D)}"]
    \\
    \PEnd(\mathcal E)
    \arrow[r, "{[\Phi,\cdot]}"]
    &
    \SPEnd(\mathcal E)\otimes K_C(D).
\end{tikzcd}
\end{equation}
The induced map on the hypercohomology $\mathbb{H}^1(-)$ gives an isomorphism 
\begin{equation}\label{e:xipar serre dual}
    \pi^\sharp_{\vxi\text{-par},\mathcal{E}}:T^*_\mathcal E\Higgs^{\vxi\text{-par}}(C,D,\vm,\va) \xrightarrow{\cong} T_{\mathcal E}\Higgs^{\text{par}}(C,D,\vm,\va).
\end{equation}
Equivalently, we have a nondegenerate  bilinear form at each tangent space. 
\[
\omega^{\vxi\text{-par}}_{\mathcal E}:
\HH^1(C_{\vxi\text{-par},\mathcal E}^\bullet)\times
\HH^1(C_{\vxi\text{-par},\mathcal E}^\bullet)
\longrightarrow
H^1(C,K_C)\cong \C.
\]

To show that the induced $\omega^{\vxi\text{-par}}$ is a symplectic form, we compare it with the Poisson structure on the moduli space of $\va$-semistable parabolic Higgs bundles $\Higgs^{\text{par}}(C,D,\vm,\va)$ constructed in \cite[Theorem 3.1]{logaresMartens}. At a point $\mathcal E$, the deformation complex of the ambient parabolic Higgs moduli space is
\[
C_{\text{par},\mathcal E}^\bullet:
\qquad
\PEnd(\mathcal E)
\xrightarrow{\ [\Phi,\cdot]\ }
\PEnd(\mathcal E)\otimes K_C(D),
\]
placed in degrees $0,1$. The Serre-dual complex $D(C_{\text{par},\mathcal E}^\bullet)$ is given by
\[
D(C_{\text{par},\mathcal E}^\bullet):
\qquad
\SPEnd(\mathcal E)
\xrightarrow{\ -[\Phi,\cdot]\ }
\SPEnd(\mathcal E)\otimes K_C(D),
\]
again placed in degrees $0,1$. With this definition, it is known that 
\[
T_\mathcal E\Higgs^{\text{par}}(C,D,\vm,\va)
\cong
\HH^1(C_{\text{par},\mathcal E}^\bullet),
\qquad
T^*_\mathcal E\Higgs^{\text{par}}(C,D,\vm,\va)
\cong
\HH^1(D(C_{\text{par},\mathcal E}^\bullet)).
\]
\noindent The Poisson map
\[
\pi^\sharp_{\text{par},\mathcal E}:
T^*_\mathcal E\Higgs^{\text{par}}(C,D,\vm,\va)
\longrightarrow
T_\mathcal E\Higgs^{\text{par}}(C,D,\vm,\va)
\]
is induced by $\HH^1(-)$ applied to the natural inclusion of complexes $
D(C_{\text{par},\mathcal E}^\bullet)
\longrightarrow
C_{\text{par},\mathcal E}^\bullet$, given by the commutative diagram
\begin{equation}\label{e:par serrredual map}
    \begin{tikzcd}
    \SPEnd(\mathcal E)
    \arrow[r, "{-[\Phi,\cdot]}"]
    \arrow[d, hook, "id"]
    &
    \SPEnd(\mathcal E)\otimes K_C(D)
    \arrow[d, hook, "-id \otimes id_{K_C(D)}"]
    \\
    \PEnd(\mathcal E)
    \arrow[r, "{[\Phi,\cdot]}"]
    &
    \PEnd(\mathcal E)\otimes K_C(D).
\end{tikzcd}
\end{equation}

Now observe that this inclusion factors through the $\vxi$-parabolic deformation complex, and the following diagram commutes
\[
\begin{tikzcd}
    D(C_{\text{par},\mathcal E}^\bullet) \arrow[r,hook] \arrow[d] & D(C_{\vxi\text{-par},\mathcal E}^\bullet) \arrow[d] \\
    C_{\text{par},\mathcal E}^\bullet& C_{\vxi\text{-par},\mathcal E}^\bullet \arrow[l, hook']
\end{tikzcd}
\]
where the horizontal maps are the canonical inclusions and vertical maps are \eqref{e:par serrredual map}(left) and \eqref{e:xipar serredual map}(right). By taking the hypercohomology $\mathbb{H}^1(-)$, we obtain the following commutative diagram 
\[
\begin{tikzcd}
T^*_\mathcal{E}\Higgs^{\text{par}}(C,D,\vm,\va) \arrow[r,two heads] \arrow[d, "\pi^\sharp_{\text{par},\mathcal E}"] & T^*_\mathcal{E}\Higgs^{\vxi\text{-par}}(C,D,\vm,\va) \arrow[d, "\pi^\sharp_{\vxi\text{-par},\mathcal{E}}"] \\
T_\mathcal{E}\Higgs^{\text{par}}(C,D,\vm,\va) & T_\mathcal{E}\Higgs^{\vxi\text{-par}}(C,D,\vm,\va) \arrow[l, hook']
\end{tikzcd}
\]
where the right-hand side vertical map is an isomorphism \eqref{e:xipar serre dual}. Here, it follows from a similar argument as in Lemma \ref{lem:inj,surj,co} that the upper (resp. lower) horizontal map is surjective (resp. injective).

By Proposition \ref{prop:symp leaf closed}, we have the following corollary.

\begin{corollary}
    For $\mathcal{E}\in \Higgs^{\vxi\text{-par}}(C,D,\vm,\va),$ the image of the Poisson map $\pi^{\sharp}_{\text{par}, \mathcal{E}}$ is precisely  $T_{\mathcal{E}}\Higgs^{\vxi\text{-par}}(C,D,\vm,\va)$. Moreover, the induced nondegenerate bivector on $\Higgs^{\vxi\text{-par}}(C,D,\vm,\va)$ defines a holomorphic symplectic form $\omega^{\vxi\text{-par}}$. 
\end{corollary}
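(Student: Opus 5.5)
The corollary has two parts: the image of $\pi^\sharp_{\text{par},\mathcal E}$ equals $T_{\mathcal E}\Higgs^{\vxi\text{-par}}(C,D,\vm,\va)$, and the resulting nondegenerate bivector is symplectic. I would read both off the last commutative square and then invoke Proposition~\ref{prop:symp leaf closed} with $X=\Higgs^{\text{par}}(C,D,\vm,\va)$ and $Y=\Higgs^{\vxi\text{-par}}(C,D,\vm,\va)$. The hypotheses are available. $Y$ is smooth by \cite[Theorem 2.6]{LeeLee2024}, and it is a closed subvariety of $X$. At the point $\mathcal E$, $X$ is smooth because $\mathcal E$ is $\va$-stable with $\va$ generic, so $\HH^2(C^\bullet_{\text{par},\mathcal E})$ is one-dimensional. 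The trace-free part then has vanishing obstructions, by the same determinant/trace argument used in Proposition~\ref{prop:smooth-HiggsO}.

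First I would verify the two horizontal maps, arguing as in Lemma~\ref{lem:inj,surj,co}. The upper inclusion $D(C^\bullet_{\text{par},\mathcal E})\hookrightarrow D(C^\bullet_{\vxi\text{-par},\mathcal E})$ is $\SPEnd(\mathcal E)\hookrightarrow\PEnd(\mathcal E)$ in degree $0$ and the identity in degree $1$. Its cokernel is therefore the skyscraper complex $\bigoplus_i\PEnd(\mathcal E)_{p_i}/\SPEnd(\mathcal E)_{p_i}\to 0$, concentrated in degree $0$. This has vanishing $\HH^1$, so the long exact sequence gives surjectivity on $\HH^1$.

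For the lower map, the cokernel of $C^\bullet_{\vxi\text{-par},\mathcal E}\hookrightarrow C^\bullet_{\text{par},\mathcal E}$ is $0\to\bigoplus_i(\PEnd/\SPEnd)_{p_i}\otimes K_C(D)_{p_i}$, concentrated in degree $1$. Its $\HH^0$ vanishes, so the map on $\HH^1$ is injective. Geometrically this injection is the differential of the closed embedding $Y\subset X$. The upper map is the restriction of cotangent vectors: it is Serre dual to the lower map, since the dual of an inclusion of complexes is the corresponding restriction.

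Commutativity of the square then shows that $\pi^\sharp_{\text{par},\mathcal E}$ factors as
\[
T^*_{\mathcal E}X\twoheadrightarrow T^*_{\mathcal E}Y\xrightarrow{\ \pi^\sharp_{\vxi\text{-par},\mathcal E}\ }T_{\mathcal E}Y\hookrightarrow T_{\mathcal E}X.
\]
The middle map is an isomorphism. This is the cone argument used for $\omega^{\cO}$: the cokernel of \eqref{e:xipar serredual map} is zero since both vertical maps are identities up to sign. Because the first map is surjective and the middle one bijective, the image of $\pi^\sharp_{\text{par},\mathcal E}$ is exactly $T_{\mathcal E}Y$, which proves the first claim.

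For the second claim I would apply Proposition~\ref{prop:symp leaf closed} directly to this factorization. It gives $[\pi_Y,\pi_Y]=0$, hence $d\omega^{\vxi\text{-par}}=0$. The main obstacle is identifying the deformation-theoretic maps with the geometric ones. Specifically, the Logares--Martens bivector must be the one induced by \eqref{e:par serrredual map} at every point, and the factorization must hold as a map of vector bundles along $Y$, not just fibrewise. For this I would rely on the naturality of the \v Cech description in families, using a universal family over an étale cover of $Y$.
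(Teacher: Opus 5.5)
Your proposal is correct and is essentially the paper's argument: the commutative square on $\HH^1$ with surjective top row and injective bottom row (via the skyscraper cokernel complexes, as in Lemma~\ref{lem:inj,surj,co}), the isomorphism $\pi^\sharp_{\vxi\text{-par},\mathcal E}$ induced by \eqref{e:xipar serredual map}, and then Proposition~\ref{prop:symp leaf closed}. One small slip in your side remark: $\HH^2(C^\bullet_{\text{par},\mathcal E})$ is zero, not one-dimensional, because on a stable object the only sections of $\PEnd(\mathcal E)$ commuting with $\Phi$ are scalars, and no nonzero scalar lies in $\SPEnd(\mathcal E)$ (the one-dimensional $\HH^2$ is that of $C^\bullet_{\vxi\text{-par},\mathcal E}$); smoothness of the ambient space at $\mathcal E$ is therefore immediate.
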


Note that $\Higgs^{\vxi\text{-par}}(C,D,\vm,\va)$ is closed in $\Higgs^{\text{par}}(C,D,\vm,\va)$ since $\vxi$-parabolic condition is a closed condition. Combining this fact with the connectedness (Proposition \ref{prop:connected}) we conclude that $\Higgs^{\vxi\text{-par}}(C,D,\vm,\va)$ is a symplectic leaf of the Poisson variety $\Higgs^{\text{par}}(C,D,\vm,\va)$. 

\begin{theorem}\label{thm:symplectic leaf xipar}
    The moduli space $\Higgs^{\vxi\text{-par}}(C,D,\vm,\va)$ is a closed symplectic leaf of the Poisson variety $\Higgs^{\text{par}}(C,D,\vm,\va)$. 
\end{theorem}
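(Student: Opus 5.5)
The argument runs in parallel with Theorem~\ref{thm:orbit-stratum-is-leaf}. The point is that the $\vxi$-parabolic locus is already closed, so no boundary strata need to be excluded. Write $X=\Higgs^{\text{par}}(C,D,\vm,\va)$ and $Y=\Higgs^{\vxi\text{-par}}(C,D,\vm,\va)$. I will use three facts established above:
\begin{enumerate}[label=\textup{(\roman*)}]
\item $Y$ is smooth by \cite[Theorem 2.6]{LeeLee2024}, and it is closed in $X$ because the graded residue condition $\gr_a(\res_{p_i}\Phi)=\xi_{i,a}\id$ is a closed condition on the universal family.
\item By the preceding corollary, for every $\mathcal E\in Y$ the image of $\pi^\sharp_{\text{par},\mathcal E}$ equals $T_{\mathcal E}Y$. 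This is the factorization of the Logares--Martens Poisson map through the isomorphism \eqref{e:xipar serre dual}, combined with surjectivity on cotangent spaces and injectivity on tangent spaces, as in Lemma~\ref{lem:inj,surj,co}. Proposition~\ref{prop:symp leaf closed} then makes $Y$ holomorphic symplectic.
\item $Y$ is connected by Proposition~\ref{prop:connected}. Since $Y$ is smooth, connectedness gives irreducibility.
\end{enumerate}

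Now let $L$ be the symplectic leaf of $X$ through a point of $Y$. By (ii), $Y$ is a connected smooth locally closed subvariety whose tangent space agrees with the characteristic distribution at every point. Maximality of leaves therefore gives $Y\subset L$.

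The inclusion $Y\subset L$ is open. Indeed, both spaces are smooth, and at each point of $Y$ they have the same tangent space $\operatorname{Im}\pi^\sharp_{\text{par},\mathcal E}$, so their dimensions agree locally. The inclusion is also closed, because $Y$ is closed in $X$ and hence $Y=Y\cap L$ is closed in $L$. Since $L$ is connected, it follows that $Y=L$. Thus $Y$ is a closed symplectic leaf, and it is irreducible by (iii).

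The step I expect to need the most care is the use of Proposition~\ref{prop:connected}, which requires $B^{\mathrm{int}}(\vm)_{\vxi}\neq\emptyset$ for all $\vxi\in\nmd$. That hypothesis is needed to obtain semi-projectivity and the properness of the family Hitchin map. I would either state it explicitly as a standing assumption or argue that it holds in the relevant range. A secondary point is to check that the Logares--Martens Poisson structure at points of $Y$ is indeed induced by the inclusion of complexes \eqref{e:par serrredual map}, so that (ii) applies. I would take this from \cite[Theorem 3.1]{logaresMartens}.
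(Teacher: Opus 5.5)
Your proposal is correct and follows the paper's proof essentially verbatim. The paper likewise gets the open inclusion $Y\subset L$ from tangency (the preceding corollary) together with connectedness (Proposition~\ref{prop:connected}), and then concludes $Y=L$ because $Y$ is closed in $\Higgs^{\text{par}}(C,D,\vm,\va)$, hence closed in $L$, and $L$ is connected. Your caveat is also fair: the paper invokes Proposition~\ref{prop:connected} without restating its hypothesis that $B^{\mathrm{int}}(\vm)_{\vxi}\neq\emptyset$ for all $\vxi\in\nmd$, so that assumption is implicitly in force in the statement.
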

\begin{proof}
    The proof is similar to Theorem \ref{thm:orbit-stratum-is-leaf}.
    Let $L$ be a symplectic leaf through a point of $\Higgs^{\vxi\text{-par}}(C,D,\vm,\va)$. By definition, we have an open inclusion $\Higgs^{\vxi\text{-par}}(C,D,\vm,\va) \subset L$. Since $L$ is smooth and connected, it is enough to show that the closure of $\Higgs^{\vxi\text{-par}}(C,D,\vm,\va)$ in $L$ is $\Higgs^{\vxi\text{-par}}(C,D,\vm,\va)$. Since its closure in the ambient space $\Higgs^{\text{par}}(C,D,\vm,\va)$ is itself, the claim follows. 
\end{proof}

\section{Main results}
We first recall the notion of an integrable system and prove
Theorem~\ref{thm:xipar} (= Theorem~\ref{thm:holosym and integrable}) in
Section~\ref{sec:integrable systems}. We then prove the remaining main results
in Section~\ref{sec:parlift}.
\subsection{Integrable systems}\label{sec:integrable systems}
In this section, we study integrable system structures for both $\Higgs^{\cO,s}(C,D)$ and $\Higgs^{\text{par}}(C,D,\vm,\va)$. There are many different conventions for the definition of an integrable system. We first set this clearly.

\begin{definition}\label{def:integrable system}
    Let $X$ be a symplectic variety and $B$ be a smooth variety such that $\dim X=2\dim B$.  A morphism $h:X \to B$ is called an algebraically completely integrable system (simply called integrable system) if $h$ is a proper flat morphism whose generic smooth fibers are Lagrangian abelian varieties (see \cite[Definition 2.9]{DonagiMarkman1996}). 
\end{definition}

\begin{remark}
   In general, one may not need to require the properness of $h$. On the other hand, for the base $B$, it is often required to be affine (See \cite[Definition 1]{boalch2018wild}).
\end{remark}

More generally, one can consider the case where $X$ is Poisson, not necessarily symplectic, and $\dim X < 2\dim B$. In this case, a generic fiber is required to be coisotropic, instead of Lagrangian. We call such $h:X \to B$ a Poisson integrable system. 

Since $\Higgs^{\vxi\text{-par}}(C,D,\vm,\va)$ is a symplectic leaf of $\Higgs^{\text{par}}(C,D,\vm,\va)$ (Theorem \ref{thm:symplectic leaf xipar}) and $\Higgs^{\cO,s}(C,D)$ is a symplectic leaf of $\Higgs^{s}(C,D)$ (Theorem \ref{thm:orbit-stratum-is-leaf}), it is natural to ask whether the restriction of the Hitchin map $h$ gives an integrable system. We first consider the $\vxi$-parabolic case. 
\begin{example}\label{ex:meromorphic integrable}
    Both the parabolic Hitchin system $h:\Higgs^{\text{par}}(C,D,\vm,\va) \to A$ and  the meromorphic Hitchin system $h:\Higgs^{s}(C,D) \to A$ are known to be Poisson integrable systems (see \cite{logaresMartens} for parabolic one and \cite{markmanspectral} for meromorphic one). However, it is worth noting that these results only imply that the generic symplectic leaves (corresponding to maximal orbits $\cO$) are algebraically integrable integrable system.  
    
\end{example}

\begin{theorem}\label{thm:holosym and integrable}
	 The Hitchin map
	$h(\vm)_{\vxi}:\Higgs^{\vec\xi\text{-par}}(C,D,\vec m,\vec\alpha)\to B(\vec m)_{\vec\xi}$
	is an algebraically completely integrable system.
\end{theorem}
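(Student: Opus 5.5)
Checking Definition~\ref{def:integrable system} amounts to verifying, for $h(\vm)_{\vxi}$, properness, flatness, the dimension identity $\dim X=2\dim B$, and that generic fibers are Lagrangian abelian varieties. All of this is assembled from results already established, under the standing hypothesis of Proposition~\ref{prop:xi-par sm.irr} and Lemma~\ref{lem:semiprojective} that $B^{\mathrm{int}}(\vm)_{\vxi}\neq\emptyset$.

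\textbf{Ambient facts.} $X=\Higgs^{\vxi\text{-par}}(C,D,\vm,\va)$ is smooth by \cite[Theorem 2.6]{LeeLee2024} and connected by Proposition~\ref{prop:connected}, and it carries the holomorphic symplectic form $\omega^{\vxi\text{-par}}$ from the corollary preceding Theorem~\ref{thm:symplectic leaf xipar}. Comparing Lemma~\ref{lem:dim} with \eqref{eq:dim bmxi} gives $\dim X=2\dim B(\vm)_{\vxi}$, and the base is an affine space, hence smooth.

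\textbf{Properness, surjectivity, flatness.} Properness and surjectivity are Proposition~\ref{prop:xi-par sm.irr}. Flatness is Lemma~\ref{l:flatness xi-par Hitchin}, obtained by base change from the family version via miracle flatness.

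\textbf{Generic fibers.} Over the dense open subset of $B(\vm)_{\vxi}$ where the spectral curve $\Sigma_b\subset S_{\vxi}$ is smooth (hence integral), the spectral correspondence $Q_{\vxi}$ is an isomorphism onto the relevant component of rank-one torsion-free sheaves. The fiber is therefore a torsor under $\mathrm{Jac}(\Sigma_b)$, which is an abelian variety. To see that smooth curves actually occur, I would use a Bertini argument on the linear system $|\Sigma(\vm)_{\vxi}|$ restricted to $S_{\vxi}$, as in \cite{LeeLee2025}. Alternatively, generic smoothness applied to the flat proper map $h(\vm)_{\vxi}$ from the smooth $X$ shows that generic fibers are smooth, and they are then identified with Jacobians.

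\textbf{Lagrangian property (main step).} The fiber dimension is $\dim B=\frac12\dim X$, so it suffices to show the fibers are isotropic, i.e.\ that the Hitchin Hamiltonians Poisson-commute on $X$. I would obtain this from Theorem~\ref{thm:symplectic leaf xipar}. Since $X$ is a symplectic leaf of $\Higgs^{\mathrm{par}}(C,D,\vm,\va)$, the Poisson bracket on $X$ of the restrictions of two functions from the ambient space equals the restriction of their ambient bracket. The components of the ambient Hitchin map $h$ Poisson-commute by Logares--Martens (Example~\ref{ex:meromorphic integrable}), and $h(\vm)_{\vxi}$ is the restriction of $h$ composed with the affine embedding $B(\vm)_{\vxi}\hookrightarrow A$. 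Hence the pullbacks of linear coordinates on $B(\vm)_{\vxi}$ Poisson-commute on $X$. Their differentials span the conormal space to a smooth fiber, because $h(\vm)_{\vxi}$ is smooth there. It follows that $T(\text{fiber})^{\perp_\omega}\subset T(\text{fiber})$, so the fibers are coisotropic, and by the dimension count they are Lagrangian.

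The hardest point is justifying this involutivity. The cleanest route is the restriction argument above, but it requires the Logares--Martens Poisson structure to agree with the bivector used here. That agreement follows from the compatible diagram of complexes preceding the corollary, since $\pi^\sharp_{\mathrm{par}}$ restricted to the leaf is $\pi^\sharp_{\vxi\text{-par}}$.
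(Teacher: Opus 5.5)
Your proof is correct. Properness, flatness, the dimension count and the identification of generic fibers with $\mathrm{Pic}^{d'}(\Sigma_b)$ are assembled exactly as in the paper. The genuine difference is the Lagrangian step.

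\emph{The paper's route.} It proves coisotropy of a generic fiber directly, by rerunning the spectral-curve argument of Logares--Martens (their Propositions 3.12--3.13) with their curve $X_s$ replaced by the smooth curve $\Sigma_b\subset S_{\vec\xi}$.

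\emph{Your route.} You argue formally. The corollary preceding Theorem~\ref{thm:symplectic leaf xipar} shows that $\Higgs^{\vec\xi\text{-par}}(C,D,\vec m,\vec\alpha)$ is a Poisson submanifold of $\Higgs^{\mathrm{par}}(C,D,\vec m,\vec\alpha)$, with restricted bivector inverse to $\omega^{\vec\xi\text{-par}}$. Hence involutivity of the ambient Hitchin Hamiltonians restricts to involutivity of the pulled-back affine coordinates of $B(\vec m)_{\vec\xi}$. Where $h(\vec m)_{\vec\xi}$ is smooth, these differentials span the conormal space, so $(T F)^{\perp_\omega}\subset TF$.

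\emph{Comparison.} Your argument is cleaner and needs no new spectral computation. It also gives involutivity on the whole leaf, not just along generic fibers. Its cost is that you need $\{h_j,h_k\}=0$ as an identity of functions on all of $\Higgs^{\mathrm{par}}(C,D,\vec m,\vec\alpha)$, including at the points of the $\vec\xi$-parabolic leaf. For non-maximal leaves, those points lie over the part of $A$ where the spectral curves in the total space of $K_C(D)$ are singular, which is exactly where the Logares--Martens argument does not directly apply. If one only extracts coisotropy of generic fibers from loc.\ cit., you must add a further step. The bracket $\{h_j,h_k\}$ is a regular function on the smooth ambient space that vanishes on a dense open subset of each relevant component, hence vanishes identically. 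The paper's route avoids this continuity step by working on $\Sigma_b$ itself.

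\emph{A minor point.} Your ``alternatively, generic smoothness'' remark is incomplete as stated. Smoothness of a generic Hitchin fiber does not by itself make $\Sigma_b$ smooth, so it does not yet identify the fiber with a Jacobian. You would need either that compactified Jacobians of singular integral planar curves are singular, or an Arnold--Liouville argument (commuting Hamiltonian flows acting transitively on a compact connected fiber). Your Bertini route, or the paper's tacit assumption that smooth spectral curves exist in $B(\vec m)_{\vec\xi}$, covers this anyway.
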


\begin{proof}
	Recall that we have a symplectic form $\omega^{\vxi\text{-par}}$ on the smooth moduli space $\Higgs^{\vec\xi\text{-par}}(C,D,\vec m,\vec\alpha)$.

	It remains to show that a generic fiber of the Hitchin map
	$h(\vm)_{\vxi}:\Higgs^{\vec\xi\text{-par}}(C,D,\vec m,\vec\alpha)\to B(\vec m)_{\vec\xi}$
	is a Lagrangian abelian variety. Let $b\in B^{\mathrm{sm}}(\vec m)_{\vec\xi}$ be a point such that the corresponding spectral curve $\Sigma_b\subset S_{\vec\xi}$ is smooth. By the spectral correspondence recalled above, the map $Q_{\vec\xi}:\Higgs^{\vec\xi\text{-par}}(C,D,\vec m,\vec\alpha)\hookrightarrow M_{S_{\vec\xi}}$ identifies the Hitchin fiber $h(\vm)_{\vxi}^{-1}(b)$ with the Jacobian $\Pic^{d'}(\Sigma_b)$ for the appropriate degree $d'$. Therefore, a generic fiber is an abelian variety. 

    To check Lagrangian, first note that Lemma \ref{lem:dim} and the description of the Hitchin base \eqref{eq:dim bmxi} imply that
	$\dim \Higgs^{\vec\xi\text{-par}}(C,D,\vec m,\vec\alpha)=2\dim B(\vec m)_{\vec\xi}$. Therefore, it suffices to show that a generic fiber $h(\vm)_{\vxi}^{-1}(b)$ is coisotropic. This follows from a minor modification of \cite[Proposition 3.12, 3.13]{logaresMartens}. Indeed, we replace the spectral curve $X_s$ used in \textit{loc.cit.} by $\Sigma_b$. Then the same argument in the proof implies that $h(\vm)_{\vxi}^{-1}(b)$ is coisotropic.

    Finally, since the Hitchin map is proper (Proposition \ref{prop:xi-par sm.irr}) and flat (Lemma \ref{l:flatness xi-par Hitchin}), it is an algebraically completely integrable system.

\end{proof}

Next, we move on to $\Higgs^{\cO,s}(C,D)$. We first restrict the base of the Hitchin map as done in $\vxi$-parabolic Higgs bundles. For each orbit $\mathcal O_i$, let $(\um_i,\uxi_i)$ be the corresponding combinatorial data. By Theorem~\ref{thm:orbit-correspondence}, we have
$X_{\um_i,\uxi_i}(E_{p_i})=\overline{\mathcal O_i}$.
Therefore, if $\res_{p_i}(\Phi)\in\overline{\mathcal O_i}$, then there exists a flag $E_{p_i}^\bullet$ of type $(\um_i,\uxi_i)$ compatible with $\res_{p_i}(\Phi)$. Choosing such a flag for each marked point $p_i$, we obtain a $\vec\xi$-parabolic Higgs bundle $(E,E_D^\bullet,\Phi)$ where $\vec\xi=(\uxi_1, \cdots, \uxi_n)$. We regard as a parabolic lift of the meromorphic Higgs bundle $(E,\Phi)$.

As explained in Section~\ref{sec:xi-par moduli}, the image of the Hitchin map of any such $\vec\xi$-parabolic Higgs bundle lies in the affine subspace $B(\vec m)_{\vec\xi}\subset A$. Since the characteristic polynomial is independent of the choice of compatible flags, it follows that the original meromorphic Higgs bundle $(E,\Phi)$ also satisfies $h(E,\Phi)\in B(\vec m)_{\vec\xi}$. To emphasize that we are considering the meromorphic Hitchin map, we write $B^{\ov\cO}$ for $B(\vm)_{\vxi}$, that is where the locus of meromorphic Higgs bundles with residue condition $\res_D(\Phi)\in\overline{\mathcal O}$ maps to under the Hitchin map. 

\begin{remark}
    A parabolic lift of $(E,\Phi)$ is not unique in general. It is unique only over the open orbit locus, i.e. when $\res_D(\Phi)\in\mathcal O$. If $\res_{p_i}(\Phi)\in\overline{\mathcal O_i}\setminus\mathcal O_i$ for some $i$, then there can be several compatible flags of type $(\um_i,\uxi_i)$ on $E_{p_i}$. This freedom is parametrized by the fiber of
$X_{\um_i,\uxi_i}(E_{p_i})\to\overline{\mathcal O_i}$
over $\res_{p_i}(\Phi)$. Different choices of compatible flags may give different pure one-dimensional sheaves under the spectral construction, but their supports are the same; see \cite[Remark 4.11]{LeeLee2025}. Therefore the choice of parabolic lift does not affect the image of the Hitchin map.

Suppose that $\res_D(\Phi)$ lies in a product of smaller strata $\mathcal O^\gamma\subset\overline{\mathcal O}\setminus\mathcal O$. Then one may also choose the combinatorial data $(\vec m^\gamma,\vec\xi^\gamma)$ corresponding to the stratum $\mathcal O^\gamma$. With this choice, the same argument shows that the Hitchin image lies in $B^{\ov{\cO^\gamma}}:=B(\vec m^\gamma)_{\vec\xi^\gamma}$. Indeed, there is a canonical embedding
$B^{\ov{\cO^\gamma}} \hookrightarrow B^{\ov\cO}$
via taking iterative blow-ups $Z_{\vec\xi} \to Z_{\vec\xi^\gamma}$ (see \cite[Section 3.2]{LeeLee2025}). Hence, after this embedding, we may still regard the Hitchin image as lying in $B^{\ov\cO}$.
\end{remark}
   
\noindent We summarize the relation in the following commutative diagram:
\[
\begin{tikzcd}
    \Higgs^{\cO,ss}(C,D) \arrow[r, hook] \arrow[rd, "h^{\cO}"]&\Higgs^{\ov\cO, ss}(C,D) \arrow[r, hook] \arrow[d, "h^{\ov\cO}"] & \Higgs^{ss}(C,D) \arrow[d, "h"] \\
     &B^{\ov\cO} \arrow[r,hook] & A
\end{tikzcd}
\]
where $h^{\cO}$ and $h^{\ov\cO}$ are the restriction of $h$ to $\Higgs^{\cO,ss}(C,D)$ and $\Higgs^{\ov\cO,ss}(C,D)$, respectively. 

Restricting to the stable locus, one may ask whether $h^{\cO}$ and $h^{\ov\cO}$ are integrable systems in the sense of Definition \ref{def:integrable system}. However, both may not form an integrable system in this sense:
\begin{itemize}
    \item For $h^{\ov\cO}:\Higgs^{\ov\cO,s}(C,D) \to B^{\ov\cO}$, the moduli space $\Higgs^{\ov\cO,s}$ is not smooth in general. 
    \item For $h^{\cO}:\Higgs^{\cO,s}(C,D) \to B^{\ov\cO}$, we have $\dim \Higgs^{\cO,s}(C,D)=2\dim B^{\ov\cO}$,
    but we may lose properness of the morphism $h^{\cO}$. For any element $b$ in $B^{\ov\cO}$ which does not lie in $B^{\ov\cO^\gamma}$ for any boundary stratum $\cO^\gamma$ of $\ov\cO$, the fiber $h^{-1}(b)$ is proper due to Example \ref{ex:meromorphic integrable} and Theorem \ref{thm:connected HiggsO}. However, we may not know whether there exists $(E,\Phi) \in \Higgs^{\cO,s}(C,D)$ whose image lies in some $B^{\ov\cO^{\gamma}}$. 
\end{itemize}

\subsection{Parabolic lifts on the moduli spaces}\label{sec:parlift}

In the previous section, we introduced a parabolic lift of a meromorphic Higgs bundle $(E,\Phi)$ whose residue lies in $\ov\cO$. We promote this on the level of moduli spaces by studying the stability conditions. The key input is that, for sufficiently small parabolic weights $\vec\alpha$, stability of the underlying meromorphic Higgs bundle forces stability of every compatible parabolic lift.

\begin{lemma}\label{lem:small-weights}
There exists $0<\varepsilon_0<1$, depending only on $(r,\vec m)$, such that for any system of parabolic weights $\vec\alpha=(\alpha_{i,a})$ with $0<\alpha_{i,a}<\varepsilon_0$, and for any stable meromorphic Higgs bundle $(E,\Phi)$, every compatible $\vec\xi$-parabolic lift of $(E,\Phi)$ is $\vec\alpha$-stable.
\end{lemma}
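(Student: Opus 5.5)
The plan is the standard integrality-gap argument. For a stable meromorphic Higgs bundle $(E,\Phi)$, every proper nonzero saturated $\Phi$-invariant subbundle $F\subset E$ satisfies $\mu(F)<\mu(E)$. The degrees are integers and the ranks satisfy $1\le \rk F\le r-1$, so there is a uniform gap
\[
\mu(E)-\mu(F)=\frac{d\,\rk F-r\deg F}{r\,\rk F}\ \ge\ \frac{1}{r\,\rk F}\ \ge\ \frac{1}{r(r-1)},
\]
because $d\,\rk F-r\deg F$ is a positive integer. This gap depends only on $r$ and not on $d$ or on $(E,\Phi)$. For a rank one bundle there are no proper subbundles and the statement is trivial, so we assume $r\ge 2$.

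Next we compare parabolic slopes with ordinary slopes. Fix any compatible $\vec\xi$-parabolic lift $(E,E_D^\bullet,\Phi)$ and any $\Phi$-invariant saturated $F$ with the induced flags $F_{p_i}^a=F_{p_i}\cap E^a_{p_i}$. Then
\[
0\le \pardeg_{\vec\alpha}(F)-\deg F=\sum_{i,a}\alpha_{i,a}\dim(F^{a-1}_{p_i}/F^a_{p_i})\le \varepsilon_0\, n\,\rk F,
\]
since $\sum_a \dim(F^{a-1}_{p_i}/F^a_{p_i})=\rk F$ at each point. Hence $0\le \parmu_{\vec\alpha}(F)-\mu(F)<n\varepsilon_0$, and the same bound holds for $E$. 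Combining the two estimates gives
\[
\parmu_{\vec\alpha}(E)-\parmu_{\vec\alpha}(F)>\frac{1}{r(r-1)}-n\varepsilon_0 .
\]
Choosing $\varepsilon_0\le 1/(n r(r-1))$, which we may also take to be less than $1$, makes this difference positive. Therefore the lift is $\vec\alpha$-stable. Here we use that $\Phi$-invariance of $F$ means the same thing for the lift as for $(E,\Phi)$: the flags impose no additional invariance condition on subobjects in the definition of parabolic stability above.

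The only subtle point is the dependence of $\varepsilon_0$. The estimate above naturally involves $n=\deg D$, which is determined by $\vec m$ since $\vec m$ records one partition for each marked point. So $\varepsilon_0$ depends only on $(r,\vec m)$, as the statement asserts. A sharper choice avoids $n$ altogether: use $\sum_{i,a}\alpha_{i,a}m_{i,a}$-type bounds, or simply note that any bound of the form $\varepsilon_0\cdot(\text{combinatorial constant of }\vec m)$ suffices.

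I expect no genuine obstacle. The step that needs the most care is checking that the comparison bound is uniform over all lifts, because different compatible flags change the induced flag on $F$. The bound $0\le\pardeg-\deg\le n\varepsilon_0\rk F$ is, however, independent of the particular flags chosen, so uniformity holds.
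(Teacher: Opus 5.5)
Your proof is correct and is essentially the paper's argument: the paper clears denominators and compares the integer gap $\delta(F)=\deg(E)\rk(F)-\deg(F)\rk(E)\geq 1$ with the bound $|\rk(E)w_\alpha(F)-\rk(F)w_\alpha(E)|\leq M\max_{i,a}\alpha_{i,a}$, which is your slope estimate multiplied by $r\,\rk(F)$, and your explicit $\varepsilon_0=1/(nr(r-1))$ corresponds to the choice $M=nr(r-1)$ there. One minor correction: your aside that a sharper bound avoids $n$ altogether is not right, since for $r=2$ with $F_{p_i}=E^1_{p_i}$ at every point the weight term equals $\sum_i(\alpha_{i,2}-\alpha_{i,1})$, which can be close to $n\varepsilon_0$; this is harmless, because $n$ is determined by $\vec m$.
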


\begin{proof}
Let $F\subset E$ be a proper nonzero $\Phi$-invariant subbundle. Since $(E,\Phi)$ is stable,
\[
\delta(F):=\deg(E)\rk(F)-\deg(F)\rk(E)>0.
\]
Since $\delta(F)$ is an integer, $\delta(F)\geq 1$. For a compatible parabolic lift, set
\[
n_{i,a}(F):=\dim(F_{p_i}^{a-1}/F_{p_i}^{a}),
\qquad
0\leq n_{i,a}(F)\leq m_{i,a},
\]
and
\[
w_{\alpha}(F):=\sum_{i,a}\alpha_{i,a}n_{i,a}(F),
\qquad
w_{\alpha}(E):=\sum_{i,a}\alpha_{i,a}m_{i,a}.
\]
The parabolic stability inequality is equivalent to
\[
\delta(F)>\rk(E)w_{\alpha}(F)-\rk(F)w_{\alpha}(E).
\]
Since the integers $n_{i,a}(F)$ are bounded in terms of $\vec m$, there is a constant $M=M(r,\vec m)$ such that
\[
\left|\rk(E)w_{\alpha}(F)-\rk(F)w_{\alpha}(E)\right|
\leq M\cdot \max_{i,a}\alpha_{i,a}
\]
for every proper nonzero $\Phi$-invariant subbundle $F$. Choose $\varepsilon_0<1/M$. Then the right-hand side has absolute value $<1$, while $\delta(F)\geq 1$. Hence the parabolic stability inequality holds for every such $F$.
\end{proof}

Unlike stable objects, it is not clear in general that every semistable meromorphic Higgs bundle admits a compatible semistable or stable $\vec\xi$-parabolic lift. Next, we introduce the map of forgetting parabolic structures. This can be seen more generally as a map between moduli spaces of parabolic Higgs bundles of different flag types (see e.g. \cite[Section 4]{logaresMartens} and  \cite[Section 3.5]{Shu2025DSP}).

\begin{proposition}\label{prop:well-defined-forgetful}
Choose a system of parabolic weights $\vec\alpha$ sufficiently small as in Lemma~\ref{lem:small-weights}. Then, for $\cO=\cO(\vec m,\vec\xi)$, forgetting the flags defines a morphism
\[
F:\Higgs^{\vec\xi\text{-par}}(C,D,\vec m,\vec\alpha)\longrightarrow \Higgs^{\ov\cO,ss}(C,D).
\]
\end{proposition}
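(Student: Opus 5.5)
The plan is to construct $F$ at the level of families and then descend it through the universal property of the good quotient $\Higgs^{\ov\cO,ss}(C,D)=R^{\ov\cO,ss}/\!/G$. There are three steps: (i) forgetting the flags sends an $\vec\alpha$-stable $\vec\xi$-parabolic Higgs bundle to a semistable meromorphic Higgs bundle whose residues lie in $\ov\cO$; (ii) this pointwise assignment is compatible with families, i.e.\ it defines a natural transformation from the moduli functor of $\vec\alpha$-stable $\vec\xi$-parabolic Higgs bundles to the functor represented by $R^{\ov\cO,ss}$ modulo $G$; (iii) the construction descends to the coarse moduli spaces.

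\textbf{Step (i).} The residue condition is pure linear algebra. At each $p_i$, $\res_{p_i}(\Phi)$ preserves a flag of type $\um_i$ and acts by $\xi_{i,a}$ on the $a$-th graded piece. By Theorem~\ref{thm:orbit-correspondence} (i.e.\ $X_{\um_i,\uxi_i}(E_{p_i})=\ov{\cO_i}$, the image of the Springer-type map), such an endomorphism lies in $\ov{\cO_i}$.

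For semistability, I would argue by contradiction, as a converse to Lemma~\ref{lem:small-weights}. Suppose $F'\subset E$ is a $\Phi$-invariant saturated subbundle with $\mu(F')>\mu(E)$. Then $\delta(F')\le -1$ is an integer. The same weight estimate as in that lemma gives
\[
\bigl|\rk(E)w_\alpha(F')-\rk(F')w_\alpha(E)\bigr|<1
\]
once every $\alpha_{i,a}<\varepsilon_0$. Hence $\parmu_{\vec\alpha}(F')>\parmu_{\vec\alpha}(E)$, contradicting parabolic (semi)stability. So $(E,\Phi)$ is slope-semistable. It need not be stable: equal slope with a strict parabolic inequality is allowed, which is why the target is the semistable space.

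\textbf{Steps (ii) and (iii).} For a flat family $(\mathcal E,\mathcal E^\bullet_D,\Phi)$ over a base $T$, forgetting the flags gives a flat family of $K_C(D)$-twisted Higgs bundles whose fibers are semistable by Step (i). Its residues lie fiberwise in $\ov\cO$. Since $\ov\cO$ is closed and the residue condition is the closed condition $\res_D^{-1}(\prod_i\ov{\cO_i}(\mathcal E_{p_i}))$, the family factors through $R^{\ov\cO,ss}$ after reduced structures are taken into account. The reducedness issue is harmless here, because $\Higgs^{\vec\xi\text{-par}}$ is smooth and hence reduced.

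Concretely, I would work locally on the parameter scheme $R_{\vec\xi}$ of the GIT construction of the $\vec\xi$-parabolic moduli space. For $N\gg0$, chosen uniformly by boundedness of semistable objects, the framed family gives a $\GL(V)$-equivariant morphism $R_{\vec\xi}^{s}\to R^{\ov\cO,ss}$. Composing with the quotient map gives a $\GL(V)$-invariant morphism to $\Higgs^{\ov\cO,ss}(C,D)$. Since $R^s_{\vec\xi}\to\Higgs^{\vec\xi\text{-par}}$ is a geometric (indeed a principal $\mathrm{PGL}$) quotient, this invariant morphism descends to $F$. Alternatively, one can use that $\Higgs^{\ov\cO,ss}(C,D)$ corepresents the moduli functor of semistable Higgs bundles with residues in $\ov\cO$.

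\textbf{Expected obstacle.} The main subtlety is the uniform choice of $N$ and the compatibility of the two Quot-scheme framings. The semistability estimate in Step (i) is routine. For the framings, I would note that $E$ is semistable, so the same $N$ that works for Nitsure's construction applies, and the parabolic parameter scheme can be built as a flag bundle over an open subset of the same $R$. With that choice the equivariant map is simply the projection forgetting the flag.
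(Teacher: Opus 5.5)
Your proposal is correct and follows the paper's proof. The paper's argument is exactly your Step (i): for small weights the integer $\deg(E)\rk(F)-\deg(F)\rk(E)$ exceeds a quantity of absolute value $<1$, hence is $\geq 0$, and Theorem~\ref{thm:orbit-correspondence} gives the residue condition in $\ov\cO$. Your Steps (ii)--(iii), which build the map on families and descend it through the geometric quotient, only spell out morphism-level details that the paper leaves implicit.
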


\begin{proof}
Let $(E,E_D^\bullet,\Phi)$ be a stable $\vec\xi$-parabolic Higgs bundle. For every proper nonzero $\Phi$-invariant subbundle $F\subset E$, parabolic stability gives
\[
\frac{\deg F+w_{\alpha}(F)}{\rk F}
<
\frac{\deg E+w_{\alpha}(E)}{\rk E}.
\]
Equivalently, $
\deg(E)\rk(F)-\deg(F)\rk(E)>
\rk(E)w_{\alpha}(F)-\rk(F)w_{\alpha}(E)$. 
By the smallness of the weights, the right-hand side has absolute value $<1$. Since the left-hand side is an integer, it is nonnegative. Hence the underlying meromorphic Higgs bundle $(E,\Phi)$ is semistable. The residue condition $\res_{D}(\Phi) \in \ov\cO$ follows from Theorem~\ref{thm:orbit-correspondence}.
\end{proof}

\begin{proposition}\label{prop:F properties}
Under the assumptions of Proposition~\ref{prop:well-defined-forgetful}, the forgetful map $F$ satisfies the following properties.
\begin{enumerate}
\item $\Higgs^{\ov\cO,s}(C,D)\subseteq \operatorname{Im}(F)$.
\item Over $\Higgs^{\ov\cO,s}(C,D)$, the morphism $F$ is projective. Moreover, over $\Higgs^{\cO,s}(C,D)$ it is an isomorphism and is compatible with the symplectic forms $\omega^{\vxi\text{-par}}$ and $\omega^{\cO}$.
\item $F$ is compatible with the Hitchin maps.
\end{enumerate}
\end{proposition}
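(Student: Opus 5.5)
For (1), I will start from a stable $(E,\Phi)\in\Higgs^{\ov\cO,s}(C,D)$. Since $\res_{p_i}(\Phi)\in\ov{\cO_i}=X_{\um_i,\uxi_i}(E_{p_i})$ by Theorem~\ref{thm:orbit-correspondence}, each residue admits a compatible flag of type $(\um_i,\uxi_i)$. Choosing one for each $i$ gives a $\vxi$-parabolic lift $(E,E_D^\bullet,\Phi)$. Lemma~\ref{lem:small-weights} makes this lift $\va$-stable, so it defines a point of $\Higgs^{\vxi\text{-par}}(C,D,\vm,\va)$ with $F$-image $(E,\Phi)$. Hence $\Higgs^{\ov\cO,s}(C,D)\subseteq\operatorname{Im}(F)$.

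For (2), I will build the relative object over $U:=\Higgs^{\ov\cO,s}(C,D)$ directly. Stable objects have only scalar automorphisms, so I work on $R^{\ov\cO,s}$, where the universal bundle $\mathcal E$ exists, and form the relative partial flag variety $\prod_i\Flag_{\um_i}(\mathcal E_{p_i})$ over $R^{\ov\cO,s}$. Inside it, compatibility with the residue and the scalar action $\xi_{i,a}$ on the graded pieces are closed conditions, so they cut out a closed subscheme $\widetilde R$, which is projective over $R^{\ov\cO,s}$. The map is $G$-equivariant and $G$ acts freely modulo scalars. By Lemma~\ref{lem:small-weights}, every point of $\widetilde R$ is $\va$-stable, so $\widetilde R/G$ maps to the moduli space, and I expect this to identify $F^{-1}(U)\cong\widetilde R/G$. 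Projectivity then descends along the geometric quotient, or alternatively follows from properness of the fibers together with quasi-projectivity.

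Over $\Higgs^{\cO,s}(C,D)$, the appendix shows that every $A\in\cO_i$ has a unique compatible flag of type $(\um_i,\uxi_i)$. So $F$ is bijective there. To upgrade this to an isomorphism, I will show that $dF$ is an isomorphism on tangent spaces. $dF$ is induced by the map of complexes $C^\bullet_{\vxi\text{-par},\mathcal E}\to C^\bullet_{\cO,\mathcal E}$. This map is well defined because a deformation in $\SPEnd\otimes K_C(D)$ has residue in $T_A\cO_i$: that tangent space consists exactly of matrices preserving the flag and acting by zero on the graded pieces modulo centralizer directions. I would check that the cone of this map is acyclic, which is a local computation at $p_i$ comparing $\End/\PEnd$ with $T_A\cO/\SPEnd_{p_i}$ via $[A,\cdot]$. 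Alternatively, I can use bijectivity, the equality of dimensions from Lemma~\ref{lem:dim} and Proposition~\ref{prop:smooth-HiggsO}, and smoothness of both sides together with normality (Zariski's main theorem), with $F$ birational. Compatibility of symplectic forms follows because the two Serre-duality squares \eqref{e:xipar serredual map} and \eqref{e:O serredual map} commute with this map of complexes and its dual.

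For (3), the characteristic polynomial does not depend on the flags, so $h^{\ov\cO}\circ F=h(\vm)_{\vxi}$ after the identification $B^{\ov\cO}=B(\vm)_{\vxi}$. I expect the main obstacle to be the isomorphism claim in (2), specifically checking that the local comparison of complexes at $p_i$ is a quasi-isomorphism, or equivalently that $F$ is unramified along $\Higgs^{\cO,s}(C,D)$.
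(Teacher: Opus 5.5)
Your proposal is correct and follows essentially the paper's proof. For (1) you use Theorem~\ref{thm:orbit-correspondence} and Lemma~\ref{lem:small-weights}. For (2) you get projectivity from the closed relative incidence scheme inside the flag bundles, bijectivity over $\Higgs^{\cO,s}(C,D)$ from uniqueness of compatible flags, the upgrade to an isomorphism via $dF_x=\HH^1(j_x^\bullet)$, and symplectic compatibility from the commuting Serre-duality squares. For (3) you use flag-independence of the characteristic polynomial. Your fallback ``proper fibers plus quasi-projectivity'' would not give projectivity on its own, but the incidence-scheme argument, which you also give, does.

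The local check you single out as the main obstacle is exactly Lemma~\ref{lem:local-linear-algebra-injective}:
\begin{itemize}
\item Since the compatible flag of $A\in\cO_i$ is canonical, $Z(A)\subset P$. Hence $\dim[A,P]=\dim P-\dim Z(A)=\dim SP$, which forces $[A,P]=SP$.
\item This is the actual reason $SP\subset T_A\cO_i=[A,\mathrm{End}(V)]$. Your stated description of $T_A\cO_i$ really describes $SP$, which is only half of $T_A\cO_i$.
\item If $[A,\phi]\in SP$, then $\phi\in P$. So $\mathrm{End}(V)/P\xrightarrow{[A,\cdot]}[A,\mathrm{End}(V)]/SP$ is an isomorphism, and the cokernel complex is acyclic.
\end{itemize}
Your Zariski-main-theorem alternative also works: a bijective morphism between smooth complex varieties is an isomorphism.
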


\begin{proof}
The first statement follows from Theorem~\ref{thm:orbit-correspondence} and Lemma~\ref{lem:small-weights}. For a stable meromorphic Higgs bundle $(E,\Phi) \in \Higgs^{\ov\cO,s}(C,D)$, there exists at least one parabolic lift and Lemma~\ref{lem:small-weights} guarantees that such a lift is $\va$-stable. 

Next we prove projectivity over the stable orbit-closure locus. Fix a stable meromorphic Higgs bundle $(E,\Phi)$ with residues $A_i=\res_{p_i}(\Phi)$ in $\ov\cO_i$ for all $i$. The fiber of $F$ over $(E,\Phi)$ is given by 
\[
\prod_{i=1}^n
\left\{
F_i^\bullet\in \Flag_{\um_i}(E_{p_i})
\ \middle|\
(A_i-\xi_{i,a}\id)(F_i^{a-1})\subset F_i^a
\text{ for all }a
\right\}
\]
because every $\vxi$-parabolic lift is $\va$-stable. This is a closed subvariety of a product of partial flag varieties, hence projective. Globally, these fibers assemble into the relative incidence scheme inside the product of the universal flag bundles over $\Higgs^{\ov\cO,s}(C,D)$. Since the incidence conditions are closed, $F$ is projective over $\Higgs^{\ov\cO,s}(C,D)$. 

In particular, if $(E,\Phi)\in\Higgs^{\cO,s}(C,D)$, there is a unique lift so that $F$ is bijective over $\Higgs^{\cO,s}(C,D)$. Proposition~\ref{prop:dFx-isomorphism} below shows that $dF$ is an isomorphism over $\Higgs^{\cO,s}(C,D)$. Since both sides are smooth, $F$ is \'{e}tale over this locus. Hence, $F$ is an isomorphism over this locus. The compatibility of symplectic structures is also proved in Proposition~\ref{prop:dFx-isomorphism}.
 
Finally, $F$ is compatible with the Hitchin maps because forgetting the parabolic flags does not change the Higgs field, and hence does not change its characteristic polynomial. Therefore the diagram
\[
\begin{tikzcd}
\Higgs^{\vec\xi\text{-par}}(C,D,\vec m,\vec\alpha) \arrow[r, "F"] \arrow[d, "h(\vm)_{\vxi}"'] 
& \Higgs^{\ov\cO,ss}(C,D) \arrow[d, "h^{\ov\cO}"] \\
 B(\vec m)_{\vec\xi} \arrow[r, equal]& B^{\ov\cO}
\end{tikzcd}
\]
commutes.
\end{proof}

To compare deformation complexes $C_{\vxi\text{-par}}^\bullet$ and $C_{\cO}^\bullet$, we need the following linear algebra fact. 
\begin{lemma}\label{lem:local-linear-algebra-injective}
Let $V$ be a finite-dimensional complex vector space, and let
$A\in End(V)$ be an element of the conjugacy class $\cO(\um,\uxi)$. Let $
0=F^\ell\subset F^{\ell-1}\subset\cdots\subset F^1\subset F^0=V$
be the compatible flag of type $(\um,\uxi)$. Set
\[
\begin{aligned}
    P&:=\{\phi\in End(V)\mid \phi(F^a)\subset F^a\text{ for all }b\}, \\
    SP&:=\{\phi\in End(V)\mid \phi(F^{a-1})\subset F^b\text{ for all }a\}.
\end{aligned}
\]
Then $[A,P]=SP$. Moreover, if $\phi\in End(V)$ satisfies $[A,\phi]\in SP$, then $\phi\in P$.
\end{lemma}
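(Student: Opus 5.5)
The plan is to reduce both assertions to two facts about the orbit $\cO(\um,\uxi)$ recorded earlier: the compatible flag of type $(\um,\uxi)$ is unique for every element of the orbit (Appendix~\ref{sec:local-linear-algebra}, Theorem~\ref{thm:orbit-correspondence}), and $\dim\cO(\um,\uxi)=r^2-\sum_a m_a^2$ (used in Lemma~\ref{lem:dim}). Everything else is a dimension count. Throughout, I read the quantifiers in the definitions of $P$ and $SP$ as $\phi(F^a)\subset F^a$ for all $a$ and $\phi(F^{a-1})\subset F^a$ for all $a$.

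\emph{Step 1: $[A,P]\subset SP$.} Compatibility means $(A-\xi_a\id)(F^{a-1})\subset F^a$ for every $a$. In particular $A\in P$, and the induced endomorphism $\gr_a(A)$ of $F^{a-1}/F^a$ is the scalar $\xi_a$. Since $P$ is a Lie algebra, $[A,\phi]\in P$ for every $\phi\in P$. Taking associated graded pieces, which is a Lie algebra homomorphism $P\to\prod_a\End(F^{a-1}/F^a)$, gives
\[
\gr_a[A,\phi]=[\xi_a\id,\gr_a\phi]=0 .
\]
An element of $P$ whose graded pieces all vanish lies in $SP$, so $[A,\phi]\in SP$.

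\emph{Step 2: the centralizer of $A$ lies in $P$.} Let $g\in Z_{\GL(V)}(A)$. Then $gF^\bullet$ is again a flag of type $(\um,\uxi)$ compatible with $gAg^{-1}=A$. By uniqueness of the compatible flag, $gF^\bullet=F^\bullet$. Hence the group $Z_{\GL(V)}(A)$ lies in the parabolic subgroup stabilizing $F^\bullet$. Passing to Lie algebras, and using that $Z_{\GL(V)}(A)$ is the open subset of invertible elements of the vector space $Z_{\End(V)}(A)$, we get $Z_{\End(V)}(A)\subset P$.

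\emph{Step 3: equality by dimension count.} Consider the linear map $\operatorname{ad}_A\colon P\to SP$. Its kernel is $Z_P(A)=Z_{\End(V)}(A)$ by Step 2. This kernel has dimension $r^2-\dim\cO=\sum_a m_a^2$. Since $\dim P=\sum_{a\le b}m_am_b$, we obtain
\[
\dim[A,P]=\sum_{a\le b}m_am_b-\sum_a m_a^2=\sum_{a<b}m_am_b=\dim SP .
\]
Together with Step 1 this gives $[A,P]=SP$.

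\emph{Step 4: the second assertion.} Suppose $[A,\phi]\in SP$. By Step 3 there is $\psi\in P$ with $[A,\psi]=[A,\phi]$. Then $\phi-\psi\in Z_{\End(V)}(A)\subset P$ by Step 2, and therefore $\phi\in P$.

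I expect Step 2 to be the main point. It relies entirely on the uniqueness of the compatible flag for elements of the open orbit $\cO(\um,\uxi)$, rather than of its closure. If one wanted to avoid the appendix, the fallback would be a direct check: decompose $V$ into generalized eigenspaces of $A$, where the flag is determined by the Jordan data via the ordering of the $\xi_a$ with multiplicities, and observe that anything commuting with $A$ preserves each generalized eigenspace and the kernel/image filtrations of its nilpotent part. The dimension formula for $\cO$ is only needed in Step 3 and is taken as given.
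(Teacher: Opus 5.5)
Your proposal is correct and follows the paper's proof essentially step for step. Both arguments show $[A,P]\subset SP$, deduce $Z(A)\subset P$ from uniqueness of the compatible flag, obtain $[A,P]=SP$ from $\dim[A,P]=\dim P-\sum_a m_a^2=\dim SP$, and prove the second claim by lifting $[A,\phi]$ to some $[A,\psi]$ with $\psi\in P$. You supply detail the paper omits where it says ``clear'' and ``canonical'': the graded-piece argument for $[A,P]\subset SP$, and the density argument passing from $Z_{\GL(V)}(A)$ to $Z_{\End(V)}(A)$.
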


\begin{proof}
First, it is clear that $[A,P]\subset SP$. We now compare dimensions. Since the compatible flag of $A$ is canonical, every
endomorphism commuting with $A$ preserves this flag. Thus the centralizer $Z(A)$ of $A$ lies in $P$. Hence
$Z_P(A):=Z(A) \cap P = Z(A)$, and therefore
\[
\dim [A,P]=\dim P-\dim Z_P(A)=\dim P-\dim Z(A).
\]
For the conjugacy class $\cO(\um,\uxi)$, the centralizer dimension is $
\dim Z(A)=\sum_{a=1}^{\ell}m_a^2$. 
On the other hand, $P/SP$ is naturally identified with
$\bigoplus_{a=1}^{\ell} End(F^{a-1}/F^a)$, so $
\dim(P/SP)=\sum_{a=1}^{\ell}m_a^2.$
Thus $
\dim [A,P]=\dim SP$. Since $[A,P]\subset SP$, we obtain $[A,P]=SP$.

Now suppose that $\phi\in End(V)$ satisfies $[A,\phi]\in SP$. By the equality just
proved, there exists $\psi\in P$ such that $
[A,p]=[A,\phi]$. Then $\phi-\psi\in Z(A)$. Since $Z(A)\subset P$, we have $\phi-\psi\in P$, and hence $\phi\in P$.
\end{proof}

Take
$x=(E,E_D^\bullet,\Phi)\in \Higgs^{\vec\xi\text{-par}}(C,D,\vec m,\vec\alpha)$
such that
$y=F(x)=(E,\Phi)$ lies in $\Higgs^{\cO,s}(C,D)$. For each $p_i\in D$, set 
\[V_i:=E_{p_i}, \quad A_i:= \res_{p_i}(\Phi),\quad P_i := \PEnd(E)_{p_i}, \quad SP_i:= \SPEnd(E)_{p_i}.\]
There is a natural morphism of complexes
\[
j_x^\bullet:C_{\vxi\text{-par},x}^\bullet\longrightarrow C_{\cO,y}^\bullet
\]
given by the inclusions in degrees $0$ and $1$. In degree $0$, this is the obvious inclusion
$\PEnd(E)\hookrightarrow\End(E)$. In degree $1$, if
$s\in\SPEnd(E)\otimes K_C(D)$, then at each marked point $p_i$ one has
$\res_{p_i}(s)\in SP_i$. By Lemma~\ref{lem:local-linear-algebra-injective},
\[SP_i = [A_i,P_i] \subset [A_i,End(V_i)]\subset T_{A_i}\cO_i\]
for all $i$, so that $s\in\cK_{\cO,y}$. Therefore $j_x^\bullet$ is a morphism of complexes. By taking the hypercohomology $\HH^1(-)$, it induces the differential of $F$, $dF_x=\HH^1(j_x^\bullet)$. 

\begin{proposition}\label{prop:dFx-isomorphism}
The differential
\[
dF_x:\HH^1(C_{\vxi\mathrm{-par},x}^\bullet)\longrightarrow
\HH^1(C_{\cO,y}^\bullet)
\]
is an isomorphism. Moreover, it preserves the symplectic forms; $F^*\omega^{\cO}=\omega^{\vxi\text{-par}}$.
\end{proposition}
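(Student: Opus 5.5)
The plan is to show that $j_x^\bullet$ is a quasi-isomorphism by computing its cokernel, and then to compare the two Serre-duality pairings through the diagrams \eqref{e:xipar serredual map} and \eqref{e:O serredual map}.

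\textbf{Step 1: the cokernel complex.} Both terms of $j_x^\bullet$ are injective maps of locally free sheaves that are isomorphisms away from $D$. So there is a short exact sequence of complexes
\[
0\to C_{\vxi\text{-par},x}^\bullet\xrightarrow{j_x^\bullet} C_{\cO,y}^\bullet\to Q^\bullet\to 0,
\]
where $Q^\bullet$ is a two-term complex of skyscraper sheaves supported on $D$. In degree $0$, $Q^0=\bigoplus_i \End(V_i)/P_i$. In degree $1$, $Q^1=\bigoplus_i \bigl(\cK_{\cO,y}/\SPEnd(E)\otimes K_C(D)\bigr)_{p_i}$. Since both sheaves contain $\End(E)\otimes K_C$ and are determined by their residues, $Q^1=\bigoplus_i T_{A_i}\cO_i/SP_i=\bigoplus_i [A_i,\End(V_i)]/[A_i,P_i]$, using Lemma~\ref{lem:local-linear-algebra-injective} for the identification $SP_i=[A_i,P_i]$. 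The differential of $Q^\bullet$ is induced by $[\Phi,\cdot]$. Locally at $p_i$, with a coordinate $z$, a section $\phi$ of $\End(E)$ gives $[\Phi,\phi]$, whose residue is $[A_i,\phi(p_i)]$. Hence the differential is $\phi\mapsto [A_i,\phi]$ modulo $[A_i,P_i]$. This map is surjective by construction. It is injective because, if $[A_i,\phi]\in SP_i$, then $\phi\in P_i$ by the second part of Lemma~\ref{lem:local-linear-algebra-injective}. So $Q^\bullet$ is acyclic, and its hypercohomology vanishes in all degrees because it is supported on points. The long exact sequence then shows that $\HH^k(j_x^\bullet)$ is an isomorphism for every $k$, and in particular $dF_x=\HH^1(j_x^\bullet)$ is an isomorphism.

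One point needs a sanity check: the identification $dF_x=\HH^1(j_x^\bullet)$ is compatible with the \v{C}ech descriptions in Proposition~\ref{prop:tangent-CO} and Lemma~\ref{lem:tangent-hypercoh}, since forgetting the flag is literally the inclusion of cocycles. As a consistency check, the dimensions agree by Proposition~\ref{prop:smooth-HiggsO} and Lemma~\ref{lem:dim}.

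\textbf{Step 2: symplectic forms.} Both forms are defined by the same recipe. Take two classes in $\HH^1$ and form their cup product through the trace pairing into $\HH^2$ of the complex $\cO_C\to K_C(D)$, or more precisely its appropriate Serre-dual target. Then apply the trace/residue map $H^1(C,K_C)\cong\C$. Equivalently, each form is given by the inverse of the Poisson maps $\pi^\sharp$ coming from the inclusions $D(C^\bullet)\hookrightarrow C^\bullet$. I would check the following square of complexes:
\[
\begin{tikzcd}
D(C_{\cO,y}^\bullet)\arrow[r,"D(j_x)"]\arrow[d] & D(C_{\vxi\text{-par},x}^\bullet)\arrow[d]\\
C_{\cO,y}^\bullet & C_{\vxi\text{-par},x}^\bullet\arrow[l,"j_x"']
\end{tikzcd}
\]
Here $D(j_x)$ is the dual of $j_x$; concretely it is the inclusion $\mathcal Z_{\cO,y}\subset\PEnd(E)$ (centralizers preserve the canonical flag) and $\End(E)\otimes K_C\subset\SPEnd(E)\otimes K_C(D)$. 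The square commutes on the nose, because every map in it is a sign-twisted inclusion of subsheaves of $\End(E)$ and $\End(E)\otimes K_C(D)$, with the same signs as in \eqref{e:xipar serredual map} and \eqref{e:O serredual map}. Taking $\HH^1$ gives
\[
\pi^\sharp_{\cO,y}=dF_x\circ\pi^\sharp_{\vxi\text{-par},x}\circ (dF_x)^*,
\]
and since $dF_x$ is an isomorphism this is precisely $F^*\omega^{\cO}=\omega^{\vxi\text{-par}}$.

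\textbf{Main obstacle.} I expect the main difficulty to be the degree-one cokernel identification together with the sign and duality bookkeeping. One has to verify that the dual of $j_x$, under the trace identifications $\PEnd^\vee\cong\SPEnd(D)$ and $\cK^\vee\otimes K_C\cong\mathcal Z$, really is the naive inclusion. The identification $\mathcal Z_{\cO,y}\subset \PEnd(E)$ uses that $Z(A_i)\subset P_i$, as in the proof of Lemma~\ref{lem:local-linear-algebra-injective}.
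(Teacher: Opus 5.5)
Your proposal is correct and follows essentially the same route as the paper. You show that $\Coker(j_x^\bullet)$ is a skyscraper complex on $D$ whose local differential $\mathrm{End}(V_i)/P_i\xrightarrow{[A_i,-]}[A_i,\mathrm{End}(V_i)]/SP_i$ is an isomorphism by Lemma~\ref{lem:local-linear-algebra-injective}, so that $dF_x=\HH^1(j_x^\bullet)$ is an isomorphism, and you obtain $F^*\omega^{\cO}=\omega^{\vxi\text{-par}}$ by applying $\HH^1$ to the same commutative square formed by $\theta_{\cO,y}$, $\theta_{\vxi\text{-par},x}$, $j_x^\bullet$ and $D(j_x^\bullet)$; your explicit description of $D(j_x^\bullet)$ as the inclusions $\mathcal Z_{\cO,y}\subset\PEnd(E)$ and $\End(E)\otimes K_C\subset\SPEnd(E)\otimes K_C(D)$ just spells out the step the paper leaves as ``easily checked''.
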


\begin{proof}
It is enough to show that the hypercohomology of the cokernel complex
$\Coker(j_x^\bullet)$ vanishes. This cokernel complex is supported on $D$ and is
given locally by
\[
\End(E)/\PEnd(E)
\xrightarrow{[\Phi,-]}
\mathcal K_{\cO,y}/\SPEnd(E)
\]
in degrees $0$ and $1$. Thus it suffices to show that, at each marked point
$p_i$, the induced map
\[ End(V_i)/P_i \xrightarrow{[A_i,-]} [A_i,End(V_i)]/SP_i. \]

It remains to check that the isomorphism preserves the symplectic forms. Recall that the symplectic forms $\omega^{\cO}$ and $\omega^{\vxi\text{-par}}$ are induced by the maps
\[\theta_{\cO,y}:D(C_{\cO,y}^\bullet)\to C_{\cO,y}^\bullet, \quad \theta_{\vxi\text{-par},x}:D(C_{\vxi\text{-par},x}^\bullet)\to C_{\vxi\text{-par},x}^\bullet\]
described in \eqref{e:O serredual map} and \eqref{e:xipar serredual map} respectively. It can be checked easily that the following diagram of complexes commutes:
\[
\begin{tikzcd}
    D(C_{\cO,y}^\bullet) \arrow[r, "D(j_x^\bullet)"] \arrow[d,"\theta_{\cO,y}"] & D(C_{\vxi\text{-par},x}^\bullet) \arrow[d, "\theta_{\vxi\text{-par},x}"] \\
    C_{\cO,y}^\bullet& C_{\vxi\text{-par},x}^\bullet \arrow[l, "j_x^\bullet"]
\end{tikzcd}
\]
where $D(j_x^\bullet)$ is induced from taking the Serre-dual of $j_x^{\bullet}$. Passing to $\bb{H}^1$ yields $F^*\omega^{\cO}=\omega^{\vxi\text{-par}}$. 

\end{proof}

\begin{theorem}\label{thm:symplectic resolution}
        Assume that $\Higgs^{\cO, s}(C,D)\neq \emptyset$ and $F$ is proper and surjective. 
        Then the normalization of $\Higgs^{\overline{\cO}, ss}(C,D)$ has symplectic singularities and the forgetful map $F$ yields a symplectic resolution. Moreover, this resolution is compatible with the Hitchin maps. 
\end{theorem}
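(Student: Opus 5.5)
The plan is to reduce to the standard criterion: if a normal variety $Y$ admits a proper birational morphism $\pi:X\to Y$ from a smooth holomorphic symplectic variety $X$, and the symplectic form on the regular locus of $Y$ pulls back to the form on $X$ (extending regularly), then $Y$ has symplectic singularities and $\pi$ is a symplectic resolution (Beauville's definition). We take $X=\Higgs^{\vec\xi\text{-par}}(C,D,\vec m,\vec\alpha)$, which is smooth (\cite[Theorem 2.6]{LeeLee2024}), connected by Proposition~\ref{prop:connected}, hence irreducible, and carries the symplectic form $\omega^{\vxi\text{-par}}$. Let $Y$ be the normalization of $\Higgs^{\ov\cO,ss}(C,D)$.

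\textbf{Step 1: $\Higgs^{\ov\cO,ss}(C,D)$ is irreducible.} Since $F$ is surjective and $X$ is irreducible, the target is irreducible. By Theorem~\ref{thm:connected HiggsO} and Lemma~\ref{lem:residue-loci-closed-locally-closed}, $\Higgs^{\cO,s}(C,D)$ is a nonempty open subset of $\Higgs^{\ov\cO,s}(C,D)$, which is open in $\Higgs^{\ov\cO,ss}(C,D)$. Hence $U:=\Higgs^{\cO,s}(C,D)$ is dense open in $\Higgs^{\ov\cO,ss}(C,D)$.

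\textbf{Step 2: lifting $F$ to the normalization.} The space $X$ is smooth, hence normal, and $F$ is dominant onto an irreducible variety. By the universal property of normalization, $F$ factors as $X\xrightarrow{\tilde F}Y\to\Higgs^{\ov\cO,ss}(C,D)$. Because $F$ is proper, $\tilde F$ is proper. By Proposition~\ref{prop:F properties}(2), $F$ is an isomorphism over the smooth open set $U$, and $U$ is unchanged under normalization. So $\tilde F$ is proper and birational, an isomorphism over $U\subset Y$, and $\tilde F^*\omega^{\cO}=\omega^{\vxi\text{-par}}$ on $\tilde F^{-1}(U)$ by Proposition~\ref{prop:dFx-isomorphism}.

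\textbf{Step 3: the symplectic form on $Y^{\mathrm{reg}}$.} This is the main obstacle, since $\omega^{\cO}$ is a priori defined only on $U$, whose complement in $Y^{\mathrm{reg}}$ need not have codimension $\ge2$. We first try the following. Over $Y^{\mathrm{reg}}$, the map $\tilde F$ is a proper birational morphism between smooth varieties of the same dimension (Lemma~\ref{lem:dim} and Proposition~\ref{prop:smooth-HiggsO}). Its exceptional locus is a divisor where the Jacobian vanishes, so $\tilde F^*$ of a volume form degenerates there. Since $\omega^{\vxi\text{-par}}$ is nondegenerate on all of $X$, the form $(\omega^{\vxi\text{-par}})^{\dim/2}$ is nowhere vanishing. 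This forces $\tilde F$ to be an isomorphism over $Y^{\mathrm{reg}}$: by Zariski's main theorem a birational proper map with nonvanishing Jacobian onto a normal target is an isomorphism.

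To make this precise, define $\omega_Y:=(\tilde F^{-1})^*\omega^{\vxi\text{-par}}$ on $\tilde F(X\setminus\mathrm{Exc})$, which contains $Y^{\mathrm{reg}}$. Then $\omega_Y$ is a closed nondegenerate $2$-form on $Y^{\mathrm{reg}}$ extending $\omega^{\cO}$. Its pullback under the resolution $\tilde F$ is $\omega^{\vxi\text{-par}}$, which is regular and nondegenerate everywhere. By Beauville's definition, $Y$ has symplectic singularities and $\tilde F$ is a symplectic resolution. If the Jacobian argument has gaps, the fallback is the $\C^*$-equivariance together with the Hitchin fibration to control fibers, and Namikawa's criterion that $K_X=\tilde F^*K_Y$ is trivial.

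\textbf{Step 4: compatibility with the Hitchin maps.} The Hitchin map $h^{\ov\cO}$ composed with normalization gives a map $Y\to B^{\ov\cO}$. Proposition~\ref{prop:F properties}(3) gives $h^{\ov\cO}\circ F=h(\vm)_{\vxi}$, and this identity passes through the factorization in Step 2.
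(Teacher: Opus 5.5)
Your Steps 1, 2 and 4 match the paper's proof. The gap is the central inference of Step 3: ``since $(\omega^{\vxi\text{-par}})^{n}$ is nowhere vanishing ($n=\dim/2$), $\tilde F$ is an isomorphism over $Y^{\mathrm{reg}}$.'' Locally over $Y^{\mathrm{reg}}$, with a volume form $\tau$, the Jacobian of $\tilde F$ is the ratio $\tilde F^*\tau/(\omega^{\vxi\text{-par}})^{n}$. That the denominator never vanishes does not by itself stop this ratio from vanishing. To rule that out you need $(\omega^{\vxi\text{-par}})^{n}$ to be the pullback of a \emph{regular, nowhere-vanishing} volume form on $Y^{\mathrm{reg}}$. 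The obvious candidate $(\omega^{\cO})^{n}$ lives only on $U$, and, as you yourself note, $Y^{\mathrm{reg}}\setminus U$ may contain divisors along which it could a priori acquire zeros or poles. So the step is unjustified as written. The fallback via $\C^*$-equivariance and Namikawa is not an argument; crepancy $K_X=\tilde F^*K_Y$ is exactly the missing statement.

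The missing input is descent across codimension one. Since $Y$ is normal and $\tilde F$ is proper birational, the valuative criterion at the codimension-one points of $Y$ (whose local rings are DVRs) shows that $\tilde F$ is an isomorphism over an open $Y^\circ\subset Y$ with $\operatorname{codim}(Y\setminus Y^\circ)\ge 2$. Then $\sigma:=(\tilde F^{-1})^*(\omega^{\vxi\text{-par}})^{n}$ is a nowhere-vanishing volume form on $Y^{\mathrm{reg}}\cap Y^\circ$. It extends over $Y^{\mathrm{reg}}$ by Hartogs, and it stays nowhere vanishing, since its zero locus would be a divisor and hence would meet $Y^\circ$. Moreover $\tilde F^*\sigma=(\omega^{\vxi\text{-par}})^{n}$ on $\tilde F^{-1}(Y^{\mathrm{reg}})$, because both agree on a dense open set. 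Only now is the Jacobian a unit, so $\tilde F$ is \'etale, proper and birational over the normal $Y^{\mathrm{reg}}$, hence an isomorphism there, and the rest of your Step 3 goes through.

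The paper applies the same descent principle directly to the $2$-form. On $V=Y^{\mathrm{reg}}$, a meromorphic $2$-form is holomorphic iff its pullback under the proper birational map $\tilde F^{-1}(V)\to V$ is, so $\omega^{\cO}$ extends to a holomorphic $\omega_V$. Nondegeneracy of $\tilde F^*\omega_V=\omega^{\vxi\text{-par}}$ then makes $d\tilde F$ injective, hence bijective, so $\omega_V$ is nondegenerate; closedness follows from density of $U$. This avoids the Jacobian and purity of the exceptional locus entirely. Your repaired route instead makes explicit the slightly stronger fact, also implicit in the paper, that $\tilde F$ is an isomorphism over $Y^{\mathrm{reg}}$.
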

\begin{proof}
    Let $U=\Higgs^{\cO, s}(C,D),X = \Higgs^{\overline{\cO}, ss}(C,D)$ and $Y= \Higgs^{\vxi\textrm{-par}}(C,D,\vm, \va)$. Since $F$ is surjective by assumption and $Y$ is irreducible by Proposition~\ref{prop:connected}, $X$ is irreducible. Since $Y$ is smooth, $F$ factors through the normalization $X^\nu$ of $X$: 
    \[ Y\xlongrightarrow{\widetilde{F}}X^{\nu}\to X\]
     Moreover, $\widetilde{F}$ is proper because $F$ is proper, and $\widetilde{F}$ is birational because $F$ is an isomorphism over the dense open $U\subset X$ by Proposition~\ref{prop:F properties}.

    Since $U$ lies in the regular locus $X_{\text{reg}}$ of $X$ by Proposition~\ref{prop:smooth-HiggsO}, we can also regard $U$ as a dense open subset of $V:=(X^{\nu})_{\text{reg}}$. By Beauville's definition of symplectic singularities \cite[Definition 1.1 and Remark 1.2]{Beauville2000}, it suffices to check that the symplectic form $\omega^\cO$ on $U$ extends to $V$. Set $W = \widetilde{F}^{-1}(V)$ and $f:\widetilde{F}|_W:W\to V$. Then $f$ is a proper birational morphism between smooth varieties. Since $f^*\omega^{\cO} = \omega^{\vxi\textrm{-par}}|_{\widetilde{F}^{-1}(U)}$ is holomorphic on $W$, the fact that a meromorphic 2-form $\omega^\cO$ on $V$ is holomorphic if and only if $f^*\omega^{\cO}$ is holomorphic implies that $\omega^\cO$ extends to a holomorphic 2-form $\omega_V$ on $V$. Moreover, since $f^*\omega_V = \omega^{\vxi\textrm{-par}}|_W$ is nondegenerate, the differential $df$ has trivial kernel. Then $\dim(W)= \dim(V)$ implies that $df$ is an isomorphism, so $\omega_V$ is nondegenerate everywhere on $V$. Finally, since $d\omega_V$ vanishes on the dense open subset $U$, it vanishes on $V$. Therefore, we see that $\omega_V$ is a holomorphic symplectic form on $V$ extending $\omega^\cO$. Hence, $X^\nu$ has symplectic singularities and $\widetilde{F}:Y\to X^\nu$ is a symplectic resolution. 

\end{proof}

\begin{rem}
   Let $C$ be an elliptic curve with origin $o$, and let $r \geq 1$. Consider the cotangent bundle $T^*C \cong C \times \mathbb{A}^1$, and let $\pr:T^*C \to \mathbb{A}^1$ be the projection to the fiber coordinate. Taking the $r$-th symmetric product, we obtain a morphism $\pr^r:\mathrm{Sym}^r(T^*C)\to \mathbb{A}^r$ induced by the product of $\pr$. By \cite{Fogarty1968, Beauville2000}, $\mathrm{Sym}^r(T^*C)$ admits a symplectic resolution $\pi:\mathrm{Hilb}^r(T^*C)\to \mathrm{Sym}^r(T^*C)$. Thus we have a commutative diagram
\begin{equation*}\label{eq:diag Hilb}
    \begin{tikzcd}
\mathrm{Hilb}^r(T^*C) \arrow[r, "\pi"] \arrow[rd, "\pr^r\circ\pi"'] &
\mathrm{Sym}^r(T^*C) \arrow[d, "\pr^r"] \\
& \mathbb{A}^r.
\end{tikzcd}
\end{equation*}
On the other hand, take $D=o$, $\vec m=(r-1,1)$, and $\vec \xi=(0,0)$, and let $\cO$ be the nilpotent orbit of Jordan type $(2,1,\dots,1)$, namely with one Jordan block of size $2$ and hence $(r-1)$ Jordan blocks in total. Then we have the commutative diagram 
\[
\begin{tikzcd}
\Higgs^{\vec\xi\text{-par}}(C,D,\vec m,\alpha) \arrow[r, "F"] \arrow[rd, swap, "h(\vm)_{\vxi}"] &
\Higgs^{\overline{\cO},ss}(C,D) \arrow[d, "h^{\ov\cO}"] \\
& B(\vec m)_{\vec\xi}.
\end{tikzcd}
\]
Here, the Hitchin base becomes 
\[B(\vec m)_{\vec \xi}\cong H^0(C,K_C)\oplus \bigoplus_{i=1}^{r-1} H^0\left(C,\left(K_C\left(D\right)\right)^{\otimes i+1}(-iD)\right)\cong \mathbb{A}^r,\]
see \cite{Suwangwen2022}. By \cite[Theorem 5.1]{Groechenig2014HilbertHiggs}, there is an isomorphism $\mathrm{Hilb}^r(T^*C) \cong \Higgs^{\vxi\text{-par}}(C,D,\vm,\va)$ under which $\pr^r\circ \pi$ is compatible with $h(\vm)_{\vxi}$. Recently, Jia showed that this isomorphism becomes a symplectomorphism \cite[Theorem 3.5]{Jia2025Symplectic}. It is a natural question to ask how these two diagrams are related; see also  \cite[Theorem 3.6]{Jia2025Symplectic}.
\end{rem}

\begin{proof}[Proof of Theorem~\ref{thm:connected HiggsO}]
Since $\Higgs^{\vec\xi\text{-par}}(C,D,\vec m,\vec\alpha)$ is smooth and connected, it is irreducible. Hence $\operatorname{Im}(F)$ is irreducible. By Proposition~\ref{prop:F properties}, $
\Higgs^{\ov\cO,s}(C,D)\subseteq \operatorname{Im}(F)$. By Lemma~\ref{lem:residue-loci-closed-locally-closed}, $\Higgs^{\ov\cO,s}(C,D)$ is open in $\Higgs^{\ov\cO,ss}(C,D)$. Since $F$ maps to $\Higgs^{\ov\cO,ss}(C,D)$, the locus $\Higgs^{\ov\cO,s}(C,D)$ is an open subset of $\operatorname{Im}(F)$. It is nonempty because it contains $\Higgs^{\cO,s}(C,D)$. Therefore $\Higgs^{\ov\cO,s}(C,D)$ is irreducible.

Again by Lemma~\ref{lem:residue-loci-closed-locally-closed}, $\Higgs^{\cO,s}(C,D)$ is open in $\Higgs^{\ov\cO,s}(C,D)$. Since it is nonempty by assumption, it is a nonempty open subset of an irreducible space. Hence $\Higgs^{\cO,s}(C,D)$ is irreducible, and in particular connected. Finally, it is dense in $\Higgs^{\ov\cO,s}(C,D)$ and $\Higgs^{\ov\cO,s}(C,D)$ is closed in $\Higgs^s(C,D)$, hence
\[
\overline{\Higgs^{\cO,s}(C,D)}
=
\Higgs^{\ov\cO,s}(C,D).
\]
\end{proof}

\section{Applications to Betti moduli spaces}\label{sec:Betti}

The goal of this section is to explain how the results of this article can be transported to the Betti side via Simpson's tame non-abelian Hodge correspondence. As before, let $C$ be a smooth projective curve of genus $g\geq 0$, and let $D=\{p_1,\ldots,p_n\}\subset C$ be a reduced divisor. We write $C^\circ=C\setminus D$ and fix the rank $r$.

\begin{definition}
	A filtered local system on $C^\circ$ is a local system $\mathbb L$ together with, for each $p_i\in D$, a decreasing left-continuous filtration $\{\mathbb L_{\widetilde p_i}^{\beta}\}_{\beta\in\mathbb R}$ on a nearby stalk $\mathbb L_{\widetilde p_i}$, preserved by the local monodromy $T_i$ around $p_i$. Here $\widetilde p_i$ denotes a chosen nearby point in $C^\circ$.
\end{definition}

There is also an analogous definition of a filtered Higgs bundle, which is equivalent to the definition of a parabolic Higgs bundle $(E,E_D^\bullet,\Phi,\vec\alpha)$ on $C$ used in this paper. See \cite{simpson-noncompact,Yokogawa-compactification,LeeLee2025} for more details.

At each $p_i$, both filtered local systems and parabolic Higgs bundles contain the data of a filtered vector space together with an endomorphism preserving the filtration.

\begin{definition}
	Let $V$ be a finite-dimensional complex vector space equipped with a decreasing left-continuous filtration $\{V^\gamma\}_{\gamma\in\mathbb R}$, and let $T:V\to V$ be an endomorphism preserving the filtration, i.e. $T(V^\gamma)\subseteq V^\gamma$ for all $\gamma$. The associated graded vector space is
	\[
	\operatorname{gr}(V)=\bigoplus_\gamma \operatorname{gr}_\gamma(V),
	\]
	where $\operatorname{gr}_\gamma(V)=V^\gamma/V^{\gamma+\epsilon}$ for sufficiently small $\epsilon>0$. The endomorphism $T$ induces an endomorphism $\operatorname{gr}(T)$ on $\operatorname{gr}(V)$.
\end{definition}

To the data $(V,\{V^\gamma\},T)$, we associate a collection of partitions as follows. A conjugacy class $\mathcal C_0\subset GL_r(\C)$ is determined by its Jordan normal form, so we may identify $\mathcal C_0$ with a collection of partitions $\{P^\nu\}$ labeled by eigenvalues $\nu$, where each partition $P^\nu=(n_1,\ldots,n_\ell)$ records the sizes of the Jordan blocks with eigenvalue $\nu$. Since $\operatorname{gr}(T)$ restricts to an endomorphism on each $\operatorname{gr}_\gamma(V)$, we take the corresponding collection of partitions $\{P^{\gamma,\nu}\}$. We call this collection the residue diagram of $(V,\{V^\gamma\},T)$. For a filtered local system, this construction is applied to the local monodromy $T_i$. For a parabolic Higgs bundle, it is applied to the residue $\res_{p_i}(\Phi)$.

\begin{exmp}\label{exmp:xipar}
	A $\vec\xi$-parabolic Higgs bundle is the same as a parabolic Higgs bundle whose graded residue at each $p_i$ acts by the scalar $\xi_{i,j}$ on the graded piece of dimension $m_{i,j}$. Thus the corresponding residue diagram has $
	P^{\alpha_{i,j},\xi_{i,j}}=(1,\ldots,1)$ of size $m_{i,j}$ for each $i,j$.
\end{exmp}

\begin{exmp}\label{exmp:trivialfiltrations}
	When the filtrations of a filtered local system on $C^\circ$ are trivial with $\beta=0$, the residue diagrams $\{P^{0,\lambda}\}$ describe the conjugacy classes of the local monodromies around the punctures.

	Similarly, if the parabolic Higgs bundle $(E,E_D^\bullet,\Phi,\vec\alpha)$ has trivial parabolic filtrations, i.e. $E_{p_i}^0=E_{p_i}$ and $E_{p_i}^{\epsilon}=0$ for $\epsilon>0$, with a single parabolic weight $\alpha_{i,1}$ at each $p_i$, then the residue diagrams $\{P^{\alpha_{i,1},\xi}\}$ describe the conjugacy classes of $\res_{p_i}(\Phi)$. One may also take the constant weights $\alpha$ and $\beta$ to be nonzero, but the ordinary character-variety case below corresponds to $\beta=0$.
\end{exmp}

Simpson's tame non-abelian Hodge correspondence matches these residue diagrams. If the Dolbeault-side weight is $\alpha$, the Higgs residue eigenvalue is $\xi=b+\sqrt{-1}c$, the Betti-side filtration jump is $\beta$, and the local monodromy eigenvalue is $\lambda$, then the correspondence is given by the following table:
\begin{center}
\begin{table}[h]
    \centering
    \caption{Simpson's table}\label{table:simpson}
    \begin{tabular}{|c|c|c|}
        \hline
         & parabolic Higgs bundle & filtered local system \\
        \hline
        weight/jump & $\alpha$ & $\beta=-2b$ \\
        \hline
        eigenvalue & $\xi=b+\sqrt{-1}c$ & $\lambda=\exp(-2\pi\sqrt{-1}\alpha+4\pi c)$ \\
        \hline
    \end{tabular}
\end{table}
\end{center}
In other words, the residue diagram $\{P^{\alpha,\xi}\}$ on the Dolbeault side is identified with the residue diagram $\{P^{\beta,\lambda}\}$ on the Betti side after the above change of labels.

We now formulate the moduli-level statement in the form used below. Fix, for each $p_i\in D$, parabolic weights $\vec\alpha_i=(\alpha_{i,1},\ldots,\alpha_{i,\ell_i})$, Higgs residue eigenvalues $\vec\xi_i=(\xi_{i,1},\ldots,\xi_{i,\ell_i})$, and multiplicities $\vec m_i=(m_{i,1},\ldots,m_{i,\ell_i})$ with $\sum_jm_{i,j}=r$. Write $\xi_{i,j}=b_{i,j}+\sqrt{-1}c_{i,j}$. The corresponding Betti-side jumps and monodromy eigenvalues are
\[
\beta_{i,j}=-2b_{i,j},
\qquad
\lambda_{i,j}=\exp(-2\pi\sqrt{-1}\alpha_{i,j}+4\pi c_{i,j}).
\]

We denote by $P^{\vec\alpha,\vec\xi}_{\vec m}$ the collection of residue diagrams $\{P^{\alpha_{i,j},\xi_{i,j}}\mid 1\leq i\leq n,\ 1\leq j\leq \ell_i\}$, where the size of the partition $P^{\alpha_{i,j},\xi_{i,j}}$ is $m_{i,j}$. Similarly, we define $P^{\vec\beta,\vec\lambda}_{\vec m}$ on the Betti side.

Let $\mathcal M^{ss}_{\mathrm{Dol}}(P^{\vec\alpha,\vec\xi}_{\vec m})$ be the coarse moduli space of $\vec\alpha$-semistable parabolic Higgs bundles of rank $r$ and parabolic degree zero whose residue diagram is $P^{\vec\alpha,\vec\xi}_{\vec m}$. Let $\mathcal M^{s}_{\mathrm{Dol}}(P^{\vec\alpha,\vec\xi}_{\vec m})$ denote its stable locus. Similarly, let $\mathcal M^{ss}_{\mathrm{Betti}}(P^{\vec\beta,\vec\lambda}_{\vec m})$ be the coarse moduli space of $\vec\beta$-semistable filtered local systems of rank $r$ and filtered degree zero whose residue diagram is $P^{\vec\beta,\vec\lambda}_{\vec m}$, and let $\mathcal M^{s}_{\mathrm{Betti}}(P^{\vec\beta,\vec\lambda}_{\vec m})$ denote its stable locus.

\begin{theorem}[Tame non-abelian Hodge correspondence]\cite{simpson-noncompact,BGPM2020}\label{thm:tame-NAHC}
	With the notation above, Simpson's tame non-abelian Hodge correspondence gives a homeomorphism of coarse moduli spaces
	\[
	\mathcal M^{ss}_{\mathrm{Dol}}(P^{\vec\alpha,\vec\xi}_{\vec m})
	\simeq_{\mathrm{top}}
	\mathcal M^{ss}_{\mathrm{Betti}}(P^{\vec\beta,\vec\lambda}_{\vec m}),
	\]
	and it restricts to a homeomorphism of stable loci
	\[
	\mathcal M^{s}_{\mathrm{Dol}}(P^{\vec\alpha,\vec\xi}_{\vec m})
	\simeq_{\mathrm{top}}
	\mathcal M^{s}_{\mathrm{Betti}}(P^{\vec\beta,\vec\lambda}_{\vec m}),
	\]
	when the local data on both sides are related by Simpson's table, namely
	$\beta_{i,j}=-2\operatorname{Re}(\xi_{i,j})$ and
	$\lambda_{i,j}=\exp(-2\pi\sqrt{-1}\alpha_{i,j}+4\pi\operatorname{Im}(\xi_{i,j}))$.
\end{theorem}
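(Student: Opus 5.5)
The plan is to deduce the statement from Simpson's analytic correspondence \cite{simpson-noncompact}, following the moduli-theoretic treatment of \cite{BGPM2020}, rather than to reprove the analysis. Both sides will be related through a third category: tame harmonic bundles on $C^\circ$. A tame harmonic bundle is a flat bundle with harmonic metric $h$ on $C^\circ$ whose Higgs field has at most simple poles and whose metric has moderate growth near each $p_i$.

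First, the objects. From a tame harmonic bundle $(V,\nabla,h)$ one extracts two filtered objects.
\begin{itemize}
\item[(a)] A filtered regular Higgs bundle, built from $\bar\partial$-holomorphic sections with prescribed growth rate $|s|_h\sim |z|^{\alpha}$. This yields a parabolic Higgs bundle $(E,E^\bullet_D,\Phi,\vec\alpha)$.
\item[(b)] A filtered local system, built from $\nabla$-flat sections with growth rate $|s|_h\sim|z|^{\beta}$. This yields $(\mathbb L,\mathbb L^\bullet_{\widetilde p_i})$.
\end{itemize}
Simpson's main local computation compares the graded pieces at each $p_i$ using the model metric $|z|^{2\alpha}(-\log|z|^2)^{k}$ on a rank-one piece. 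This produces exactly the dictionary of Table~\ref{table:simpson}, $\beta=-2b$ and $\lambda=\exp(-2\pi\sqrt{-1}\alpha+4\pi c)$. The growth exponents $k$, coming from the weight filtration of the nilpotent part, show that the Jordan type of $\operatorname{gr}\res(\Phi)$ on each graded piece matches the Jordan type of $\operatorname{gr}(T_i)$. Hence the residue diagram $P^{\vec\alpha,\vec\xi}_{\vec m}$ corresponds to $P^{\vec\beta,\vec\lambda}_{\vec m}$, and the degrees agree: parabolic degree zero on one side corresponds to filtered degree zero on the other.

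Second, the existence theorems, which give a bijection on polystable objects. I would invoke the following two results.
\begin{itemize}
\item[(i)] Every stable parabolic Higgs bundle of parabolic degree zero admits a tame harmonic metric adapted to the weights, unique up to scalar.
\item[(ii)] Every stable filtered local system of degree zero admits an adapted tame harmonic metric, unique up to scalar.
\end{itemize}
Both are proved by Donaldson's heat flow with an initial metric modelled on the local data. Polystable objects decompose as sums of stable ones, and the constructions preserve subobjects, so stability corresponds to stability and polystability to polystability. Passing to $S$-equivalence classes then gives a bijection of the semistable coarse spaces that restricts to the stable loci.

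Third, the topology. I expect this step, showing the bijection is a homeomorphism, to be the main obstacle. My first approach is the one used in \cite{BGPM2020}. Realize both moduli spaces as quotients of spaces of solutions to the Hitchin equations in weighted Sobolev spaces on $C^\circ$, with gauge group modeled on the local data. Then show that the maps to each side are continuous, using continuous dependence of harmonic metrics on parameters. Properness comes from Uhlenbeck-type compactness together with uniform control near the punctures. The delicate point is uniformity near $D$ as the residue approaches nongeneric configurations within a fixed residue diagram. I would handle this with the standard local model estimates and by fixing the residue diagram, as in the statement.
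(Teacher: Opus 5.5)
Your outline of the stable case is the standard content of \cite{simpson-noncompact} and \cite{BGPM2020}, and that is all the paper offers for this theorem. The paper gives no argument of its own and simply quotes the result from those two sources.

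The one step you argue yourself does not follow: ``passing to $S$-equivalence classes then gives a bijection of the semistable coarse spaces.'' A point of $\mathcal M^{ss}_{\mathrm{Dol}}(P^{\vec\alpha,\vec\xi}_{\vec m})$ is an $S$-equivalence class of semistable objects \emph{with residue diagram} $P^{\vec\alpha,\vec\xi}_{\vec m}$. Its polystable representative may have a strictly more degenerate diagram, so it need not be one of the polystable objects your bijection matches. This is exactly the phenomenon of Remark~\ref{rem:semistable-residue-condition}.

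Here is an example. Take $g\geq 1$, $D=p$, $r=2$, $d=0$, trivial flags and weight $0$. Let $E=L_1\oplus L_2$ with $L_1\not\cong L_2$ of degree $0$, and $\Phi=\begin{pmatrix}0&\psi\\0&0\end{pmatrix}$. Choose $\psi\in H^0(C,L_1\otimes L_2^{-1}\otimes K_C(p))$ not vanishing at $p$; this is possible because that space has dimension $g$, while $H^0(C,L_1\otimes L_2^{-1}\otimes K_C)$ has dimension $g-1$. Then $(E,\Phi)$ is strictly semistable with regular nilpotent residue, but its graded object $(L_1,0)\oplus(L_2,0)$ has residue $0$. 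The same happens on the Betti side, where semisimplification can push local monodromy from $\mathcal C_i$ into $\overline{\mathcal C_i}\setminus\mathcal C_i$.

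So the semistable statement needs more than the polystable correspondence. You must show that a polystable Dolbeault object of some degenerate diagram is the graded object of a semistable object of diagram $P^{\vec\alpha,\vec\xi}_{\vec m}$ if and only if its Betti partner is the graded object of a semistable filtered local system of diagram $P^{\vec\beta,\vec\lambda}_{\vec m}$. This is a statement about non-split extensions, which carry no harmonic metric. Neither your existence theorems nor Simpson's local computation says anything about the residue or monodromy Jordan types of such extensions.

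Closing the gap requires one of the following:
\begin{enumerate}
\item Extend the correspondence to semistable objects, for instance by identifying the extension groups on both sides via Simpson's $L^2$-cohomology comparison, compatibly with restriction to punctured discs so that the Jordan types of extensions match.
\item Reinterpret the ``semistable'' spaces as parametrizing polystable objects with the given diagram.
\end{enumerate}
The stable half of your argument is unaffected. It is also the only part the paper's applications (Theorems~\ref{thm:connected-Betti-xi-par} and~\ref{thm:connected-character-variety}) actually use.
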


In the main part of this article, we studied moduli spaces of $\vec\xi$-parabolic Higgs bundles and meromorphic Higgs bundles with fixed residue conjugacy classes. Both can be regarded as Dolbeault moduli spaces with prescribed residue diagrams as described in Examples \ref{exmp:xipar} and \ref{exmp:trivialfiltrations}.

First, consider the case where the residue diagram $P^{\vec\alpha,\vec\xi}_{\vec m}$ is of the type appearing in Example \ref{exmp:xipar}: on each graded piece, the residue acts by a prescribed scalar. Then the Dolbeault moduli space $\mathcal M^{ss}_{\mathrm{Dol}}(P^{\vec\alpha,\vec\xi}_{\vec m})$ is precisely the moduli space $\Higgs^{\vec\xi\text{-par},ss}(C,D,\vec m,\vec\alpha)$ of $\vec\alpha$-semistable $\vec\xi$-parabolic Higgs bundles of parabolic degree zero. The stable loci match as well:
\[
\mathcal M^{s}_{\mathrm{Dol}}(P^{\vec\alpha,\vec\xi}_{\vec m})
=
\Higgs^{\vec\xi\text{-par},s}(C,D,\vec m,\vec\alpha).
\]
By Theorem \ref{thm:tame-NAHC}, the corresponding stable Betti moduli space $\mathcal M^{s}_{\mathrm{Betti}}(P^{\vec\beta,\vec\lambda}_{\vec m})$ is homeomorphic to $\Higgs^{\vec\xi\text{-par},s}(C,D,\vec m,\vec\alpha)$.

\begin{theorem}\label{thm:connected-Betti-xi-par}
	Suppose that the Betti residue diagram $P^{\vec\beta,\vec\lambda}_{\vec m}$ corresponds, via Simpson's table, to scalar graded residue data of type $(\vec m,\vec\xi)$ on the Dolbeault side. Then the stable Betti moduli space
	$\mathcal M^{s}_{\mathrm{Betti}}(P^{\vec\beta,\vec\lambda}_{\vec m})$
	is connected whenever it is nonempty.
\end{theorem}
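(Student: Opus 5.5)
The plan is to transport the question to the Dolbeault side and invoke the connectedness of $\Higgs^{\vxi\text{-par}}(C,D,\vm,\va)$ proved in Proposition~\ref{prop:connected}.

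First, we read Simpson's table backwards. The Betti data $(\vec\beta,\vec\lambda)$ determine Dolbeault data $(\vec\alpha,\vec\xi)$: $\operatorname{Re}(\xi_{i,j})=-\beta_{i,j}/2$, and $\alpha_{i,j}$ together with $\operatorname{Im}(\xi_{i,j})$ are recovered from $\lambda_{i,j}$ by taking the argument and modulus respectively. By hypothesis, the resulting Dolbeault residue diagram $P^{\vec\alpha,\vec\xi}_{\vec m}$ is of scalar graded type, as in Example~\ref{exmp:xipar}. Theorem~\ref{thm:tame-NAHC} then gives a homeomorphism
\[
\mathcal M^{s}_{\mathrm{Betti}}(P^{\vec\beta,\vec\lambda}_{\vec m})\simeq_{\mathrm{top}}\mathcal M^{s}_{\mathrm{Dol}}(P^{\vec\alpha,\vec\xi}_{\vec m})=\Higgs^{\vec\xi\text{-par},s}(C,D,\vec m,\vec\alpha).
\]
Connectedness is a topological property, so it suffices to show that the right-hand side is connected whenever it is nonempty. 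The residue condition \eqref{eq:Residue condition} holds automatically: nonemptiness gives a $\vxi$-parabolic Higgs bundle, and the residue theorem applied to $\tr\Phi$ forces it.

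Second, we apply the earlier results on the Dolbeault side. If the weights $\vec\alpha$ are generic in the sense of Assumption~\ref{assmp:genericity}, then stable equals semistable, and the stable locus is the full moduli space $\Higgs^{\vxi\text{-par}}(C,D,\vm,\va)$. That space is connected by Proposition~\ref{prop:connected}; equivalently, it is irreducible by Theorem~\ref{thm:symplectic leaf xipar}.

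Third, and this is the main obstacle, the weights are fixed by $\vec\lambda$ and need not be the generic weights of Assumption~\ref{assmp:genericity}. Proposition~\ref{prop:connected} also has a hypothesis, namely that $B^{\mathrm{int}}(\vm)_{\vxi}$ is nonempty for all $\vxi\in\nm$. We would handle this as follows.

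1. Interpret the statement's genericity hypothesis as saying that the $\vec\alpha$ dictated by $\vec\lambda$ can be chosen generic. In the paper's Betti formulation the genericity assumption is stated explicitly.
2. If the weights are not generic, use a wall-crossing or variation-of-weights argument instead. Perturbing $\vec\alpha$ slightly into an adjacent chamber only shrinks the stable locus to an open subset. One would then need that the stable locus at a wall is an open subset of an irreducible smooth space, and hence connected. The first thing to try is to embed the stable locus at $\vec\alpha$ as an open subset of the moduli space for a nearby generic $\vec\alpha'$, using the fact that $\vec\alpha$-stable implies $\vec\alpha'$-stable for $\vec\alpha'$ close enough. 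A nonempty open subset of an irreducible space is irreducible, and therefore connected.
3. Verify the nonemptiness hypothesis on $B^{\mathrm{int}}$ for the given combinatorial type. We expect to reduce this to the existence of integral spectral curves in $|\Sigma(\vm)_{\vxi}|$, using nonemptiness of the moduli space together with the results of \cite{LeeLee2025}.
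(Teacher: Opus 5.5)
Your route is the paper's. Its proof likewise uses Theorem~\ref{thm:tame-NAHC} to identify $\mathcal M^{s}_{\mathrm{Betti}}(P^{\vec\beta,\vec\lambda}_{\vec m})$ with $\Higgs^{\vxi\text{-par},s}(C,D,\vm,\va)$ and then quotes Proposition~\ref{prop:connected}. Your step 3 addresses points the paper leaves implicit: the paper simply treats the Simpson-table weights as the generic ones of Assumption~\ref{assmp:genericity}, and treats nonemptiness of $B^{\mathrm{int}}(\vm)_{\vxi}$ as a standing hypothesis. Your wall-crossing remedy, embedding the $\vec\alpha$-stable locus as an open subset of the moduli space for a nearby generic $\vec\alpha'$ (which exists under primitivity, where the walls are finitely many proper hyperplanes), soundly removes the first of these, while the second remains an assumption, exactly as in the paper.
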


\begin{proof}
	Given such a residue diagram $P^{\vec\beta,\vec\lambda}_{\vec m}$, take the corresponding residue diagram $P^{\vec\alpha,\vec\xi}_{\vec m}$ on the Dolbeault side. Then
	\[
	\mathcal M^{s}_{\mathrm{Dol}}(P^{\vec\alpha,\vec\xi}_{\vec m})
	=
	\Higgs^{\vec\xi\text{-par},s}(C,D,\vec m,\vec\alpha).
	\]
	The connectedness follows from Proposition~\ref{prop:connected} and the homeomorphism of Theorem \ref{thm:tame-NAHC}.
\end{proof}

Next, we consider a trivial parabolic filtration, meaning that $E_{p_i}^0=E_{p_i}$ and $E_{p_i}^{\epsilon}=0$ for $\epsilon>0$, with a single parabolic weight $\alpha_{i,1}\in[0,1)$ at each puncture. The residue diagram has only one jump at each puncture, and it is simply the Jordan diagram of the residue $\res_{p_i}(\Phi)$. We write $\cO_i\subset\mathfrak{gl}_r(\C)$ for the conjugacy class whose residue diagram is $P^{\alpha_{i,1},\vec\xi_i}$. Automatically, $\mathcal{O}=\prod_{i=1}^n\mathcal{O}_i$ satisfies the residue condition \eqref{eq:trace-compatibility}. In this case, $\vec\alpha$-(semi)stability is equivalent to the usual slope (semi)stability for the underlying meromorphic Higgs bundle. Hence
\[
\mathcal M^{ss}_{\mathrm{Dol}}(P^{\vec\alpha,\vec\xi}_{\vec m})
\cong
\Higgs^{\cO,ss}(C,D),
\qquad
\mathcal M^{s}_{\mathrm{Dol}}(P^{\vec\alpha,\vec\xi}_{\vec m})
\cong
\Higgs^{\cO,s}(C,D).
\]

On the Betti side, we take the trivial filtration with the single jump $\beta_{i,1}=0$ at each puncture. Then the residue diagram has only one jump at each puncture, and it is simply the Jordan diagram of the local monodromy $T_i$. We write $\mathcal C_i\subset GL_r(\C)$ for the conjugacy class whose residue diagram is $P^{0,\vec\lambda_i}$. Let $\mathcal{C}=(\mathcal C_1,\dots,\mathcal C_n)$ be a collection of conjugacy
classes. Analogously to the residue condition~\eqref{eq:trace-compatibility}, we
impose the condition
\begin{equation}\label{eq:trace-compatibility Betti}
    \prod_{i=1}^n\det(\mathcal C_i)=1.
\end{equation}
We recall the relation with character varieties. For such $\mathcal{C}$, define the representation variety
\[
R_{\mathcal C}
=
\left\{
(A_1,B_1,\ldots,A_g,B_g,M_1,\ldots,M_n)
\in GL_r(\C)^{2g}\times\prod_{i=1}^n\mathcal C_i
\ \middle|\
\prod_{j=1}^g[A_j,B_j]\prod_{i=1}^nM_i=I
\right\}.
\]
The group $GL_r(\C)$ acts on $R_{\mathcal C}$ by simultaneous conjugation. The associated character variety is the affine GIT quotient $\chi(C^\circ,\mathcal C):=R_{\mathcal C}//GL_r(\C)$. We denote its irreducible locus by $\chi(C^\circ,\mathcal C)^{\operatorname{irr}}$.

Since $\beta=0$, the filtered degree on the Betti side is zero: the underlying flat bundle has degree zero and the weight contribution is zero. Hence every $0$-filtered local system is semistable. It is stable if and only if the underlying representation of $\pi_1(C^\circ)$ is irreducible, and it is polystable if and only if the representation is semisimple. Thus we have the dictionary
\begin{center}
	\begin{tabular}{|c|c|}
		\hline
		$0$-filtered local system & representation-theoretic meaning \\
		\hline
		semistable & arbitrary representation \\
		\hline
		polystable & semisimple representation \\
		\hline
		stable & irreducible representation \\
		\hline
	\end{tabular}
\end{center}
Since the affine GIT quotient parametrizes closed orbits, equivalently semisimple representations, or polystable $0$-filtered local systems, we have the following identifications on the Betti side:
\[
\mathcal M^{ss}_{\mathrm{Betti}}(P^{\vec\beta,\vec\lambda}_{\vec m})
\cong
\chi(C^\circ,\mathcal C),
\qquad
\mathcal M^{s}_{\mathrm{Betti}}(P^{\vec\beta,\vec\lambda}_{\vec m})
\cong
\chi(C^\circ,\mathcal C)^{\mathrm{irr}}.
\]
where $P^{\vec\beta,\vec\lambda}_{\vec m}$ is the associated residue diagram to $\mathcal{C}$.

We now impose compatibility between the Dolbeault and Betti local data. We take $\vec\alpha_i=\alpha_{i,1}\in[0,1)$ for all $i$ and take the Betti filtration to be trivial, namely $\vec\beta=0$. We also impose the parabolic degree zero condition, that is $d+\sum_{i=1}^n\alpha_{i,1}r=0$. By Simpson's table, the Higgs residue eigenvalues must be purely imaginary, say $\xi_{i,j}=\sqrt{-1}c_{i,j}$, and the corresponding local monodromy eigenvalues are $
\lambda_{i,j}=\exp(-2\pi\sqrt{-1}\alpha_{i,1}+4\pi c_{i,j})$.

We take the corresponding residue conjugacy classes $\cO_i\subset\mathfrak{gl}_r(\C)$ and monodromy conjugacy classes $\mathcal C_i\subset GL_r(\C)$. Applying Theorem \ref{thm:tame-NAHC} in this case, we obtain homeomorphisms of coarse moduli spaces
\begin{equation}\label{e:higgsOchar}
    \Higgs^{\cO,ss}(C,D)\simeq_{\mathrm{top}}\chi(C^\circ,\mathcal C),
    \qquad
    \Higgs^{\cO,s}(C,D)\simeq_{\mathrm{top}}\chi(C^\circ,\mathcal C)^{\operatorname{irr}}.
\end{equation}

For the application, we introduce one more notation for $\mathcal{C}$. We say $\mathcal{C}$ is $q$-divisible if there exists $m \geq 2$ such that $\mathcal{C} \sim m\mathcal{C}'$ for some collection of conjugacy classes $\mathcal{C}'$. We also say $\mathcal{C}$ is $q$-indivisible if $\mathcal{C}$ is not $q$-divisible. 

\begin{theorem}\label{thm:connected-character-variety}
	Let $\mathcal C=(\mathcal C_1,\ldots,\mathcal C_n)$ be $q$-indivisible such that $\prod_{i=1}^n\det(\mathcal C_i)=1.$ Assume that, for each $i$, all the eigenvalues of $\mathcal C_i$ have a common argument. Then the irreducible locus
	$\chi(C^\circ,\mathcal C)^{\operatorname{irr}}$
	is connected whenever it is nonempty. 
\end{theorem}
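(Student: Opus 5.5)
The plan is to move the question to the Dolbeault side through the homeomorphism \eqref{e:higgsOchar} and then apply Theorem~\ref{thm:connected HiggsO}.

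First, I will choose the Dolbeault data. Each $\mathcal C_i$ has all eigenvalues of one argument $\theta_i$, so I can write every eigenvalue as $\lambda=\exp(-2\pi\sqrt{-1}\alpha_{i,1}+4\pi c)$. Here $\alpha_{i,1}\in[0,1)$ is the single weight fixed by $\theta_i$, and $c\in\mathbb R$ is fixed by $|\lambda|$. The residue $\xi=\sqrt{-1}c$ is then the purely imaginary value required by Simpson's table with $\beta=0$. The Jordan types carry over unchanged, and this defines a product of orbits $\cO=\prod_i\cO_i$ in $\mathfrak{gl}_r$.

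The condition $\prod_i\det(\mathcal C_i)=1$ should give \eqref{eq:trace-compatibility}, together with a degree $d$ for which $d+r\sum_i\alpha_{i,1}=0$. Taking $\log|\cdot|$ of the determinant condition gives $\sum\tr(A_i)\in\sqrt{-1}\mathbb R$ with vanishing imaginary part, i.e.\ $\sum_i\sum_j c_{i,j}=0$. Taking the argument gives $r\sum_i\alpha_{i,1}\in\mathbb Z$, so $d:=-r\sum_i\alpha_{i,1}$ is an integer.

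Second, \eqref{e:higgsOchar} identifies $\chi(C^\circ,\mathcal C)^{\mathrm{irr}}$ homeomorphically with $\Higgs^{\cO,s}(C,D)$ of rank $r$ and degree $d$. Since the weight is constant at each puncture, $\va$-stability is ordinary slope stability. Nonemptiness therefore transfers, and Theorem~\ref{thm:connected HiggsO} gives connectedness, provided its standing hypotheses hold: Assumption~\ref{assmp:genericity} and the hypotheses of Proposition~\ref{prop:connected}, which pass through the $\vxi$-parabolic moduli space of type $(\vm,\vxi)$ corresponding to $\cO$.

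Verifying those hypotheses is where I expect the main obstacle. I would use $q$-indivisibility to produce primitive data $\gcd(r,d,\{m_{i,a}\})=1$, which by Remark~\ref{rem:assump} yields generic weights. Here $m_{i,a}$ are the multiplicities of the distinct eigenvalue blocks of $\cO_i$.

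The argument would be by contrapositive. Suppose an integer $k\ge2$ divides $r$, $d$ and all $m_{i,a}$. Then every eigenvalue multiplicity in each $\mathcal C_i$ is divisible by $k$. The multiplicity data then splits as $k$ copies, and the determinant condition for a $1/k$-sized collection $\mathcal C'$ reduces to $\exp(-2\pi\sqrt{-1}d/k)$-type conditions, which hold because $k\mid d$. This exhibits $\mathcal C\sim k\mathcal C'$, contradicting $q$-indivisibility. The delicate point is matching the paper's notion ``$\mathcal C\sim m\mathcal C'$'' with multiplicity divisibility. If the Jordan types are not compatible with such a splitting, I would instead restrict to the eigenvalue multiplicities $\vm$ only, since only $\vm$ enters primitivity.

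The non-emptiness of $B^{\mathrm{int}}(\vm)_{\vxi}$ needed for Proposition~\ref{prop:connected} I would take to follow from the nonemptiness of $\Higgs^{\cO,s}(C,D)$, or treat as part of the standing assumptions used in Theorem~\ref{thm:connected HiggsO}. With these in place, connectedness transfers back through the homeomorphism.
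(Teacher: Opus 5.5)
Your proposal is correct and follows the paper's route: use Simpson's table with $\beta=0$ to produce purely imaginary residue classes $\cO_i$ of parabolic degree zero, apply the homeomorphism \eqref{e:higgsOchar}, use $q$-indivisibility to get primitivity and hence generic weights via Remark~\ref{rem:assump}, and conclude with Theorem~\ref{thm:connected HiggsO}. One small correction removes your ``delicate point'': in the paper's correspondence the $m_{i,a}$ are the entries of the conjugate partitions of the Jordan types, not the eigenvalue multiplicities, and $k\mid m_{i,a}$ for all $i,a$ says exactly that every Jordan block size occurs with multiplicity divisible by $k$, so the splitting $\mathcal C\sim k\mathcal C'$ (with $\prod_i\det\mathcal C_i'=1$ because $k\mid d$) is automatic and your fallback is not needed.
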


\begin{proof}
	Under the assumptions, the conjugacy classes $\mathcal C_i$ are compatible, via Simpson's table with $\beta=0$, with a collection of purely imaginary residue conjugacy classes $\cO_i$ on the Dolbeault side satisfying the parabolic degree condition. By Remark \ref{rem:assump}, the $q$-indivisible condition implies that Assumption \ref{assmp:genericity} holds. Therefore \eqref{e:higgsOchar} gives a homeomorphism
	\[
	\Higgs^{\cO,s}(C,D)\simeq_{\mathrm{top}}
	\chi(C^\circ,\mathcal C)^{\operatorname{irr}}.
	\]
	The connectedness of $\Higgs^{\cO,s}(C,D)$ from Theorem~\ref{thm:connected HiggsO} implies the claim.
\end{proof}

We finally comment on orbit closures. Let $\overline{\cO_i}$ and $\overline{\mathcal C_i}$ be the Zariski closures of compatible conjugacy classes. The condition $\res_{p_i}(\Phi)\in\overline{\cO_i}$ allows all residue diagrams occurring in the closure order of $\cO_i$. Similarly, the condition $M_i\in\overline{\mathcal C_i}$ allows the corresponding monodromy diagrams in the closure order of $\mathcal C_i$. Since the tame non-abelian Hodge correspondence preserves residue diagrams after relabelling by Simpson's table, it matches these strata one by one.

Thus one obtains a natural stratumwise correspondence between $\Higgs^{\overline{\cO},s}(C,D)$ and $\chi(C^\circ,\overline{\mathcal C})^{\mathrm{irr}}$. However, to the author's knowledge, a homeomorphism between these two spaces, equipped with the subspace topologies coming from the orbit-closure conditions, does not seem to be explicitly stated in the literature. In particular, the passage from a stratumwise correspondence to a homeomorphism of the closure loci requires compatibility with limits across strata.

\begin{corollary}
    Suppose that the stratumwise tame non-abelian Hodge correspondence for the compatible orbit closures extends to a homeomorphism
    \[
    \Higgs^{\overline{\cO},s}(C,D)
    \simeq_{\mathrm{top}}
    \chi(C^\circ,\overline{\mathcal C})^{\mathrm{irr}}.
    \]
    If the corresponding open residue-orbit locus $\Higgs^{\cO,s}(C,D)$ is nonempty, then
    $\chi(C^\circ,\overline{\mathcal C})^{\mathrm{irr}}$ is connected.
\end{corollary}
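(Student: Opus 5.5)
The plan is to transport the result purely topologically: connectedness passes across the assumed homeomorphism, so it suffices to show that $\Higgs^{\overline{\cO},s}(C,D)$ is connected.

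Since we assume $\Higgs^{\cO,s}(C,D)\neq\emptyset$, Theorem~\ref{thm:connected HiggsO} applies. It gives that $\Higgs^{\ov\cO,s}(C,D)$ is irreducible and that it is the closure of the connected space $\Higgs^{\cO,s}(C,D)$ in $\Higgs^s(C,D)$. We use this in one of two ways.

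\begin{itemize}
\item Irreducible implies connected in the Zariski topology. To pass to the analytic topology, which is the one carried by the tame non-abelian Hodge correspondence, we use the standard fact that an irreducible complex algebraic variety is connected in the classical topology.
\item Alternatively, the closure of a classically connected set is classically connected. Here $\Higgs^{\cO,s}(C,D)$ is smooth and irreducible, hence classically connected. Its Zariski closure $\Higgs^{\ov\cO,s}(C,D)$ agrees with its classical closure, because $\Higgs^{\cO,s}(C,D)$ is a constructible dense subset.
\end{itemize}

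Either route shows that $\Higgs^{\ov\cO,s}(C,D)$ is connected in the analytic topology.

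Finally, apply the hypothesised homeomorphism
\[
\Higgs^{\overline{\cO},s}(C,D)\simeq_{\mathrm{top}}\chi(C^\circ,\overline{\mathcal C})^{\mathrm{irr}}.
\]
Connectedness is a topological invariant, so $\chi(C^\circ,\overline{\mathcal C})^{\mathrm{irr}}$ is connected. Nonemptiness also transfers, since the source contains the nonempty space $\Higgs^{\cO,s}(C,D)$.

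The only delicate step is the comparison between the Zariski and classical topologies. Simpson's correspondence is a homeomorphism for the analytic topology, so we must make sure the connectedness we feed into it is analytic connectedness. I would cite the standard GAGA-type fact (for example Mumford, \emph{Algebraic Geometry I}, Cor.~4.16, or SGA1 Exp.~XII) that irreducible complex varieties are connected in the classical topology. Note also that the use of Theorem~\ref{thm:connected HiggsO} implicitly relies on Assumption~\ref{assmp:genericity}, which stands throughout the paper.
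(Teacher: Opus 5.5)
Your proposal is correct and matches the paper's proof: Theorem~\ref{thm:connected HiggsO} makes $\Higgs^{\ov\cO,s}(C,D)$ irreducible, hence connected, and connectedness then transfers across the assumed homeomorphism. Your extra care in passing from Zariski to classical connectedness, which the paper leaves implicit, is appropriate; note only that your second route still uses the same GAGA-type fact to get classical connectedness of $\Higgs^{\cO,s}(C,D)$, so it is not really independent of the first.
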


\begin{proof}
    By Theorem~\ref{thm:connected HiggsO}, the nonemptiness of $\Higgs^{\cO,s}(C,D)$ implies that $\Higgs^{\overline{\cO},s}(C,D)$ is irreducible, hence connected. The claim follows from the assumed homeomorphism.
\end{proof}

\appendix
\section{Conjugacy classes and compatible flags}\label{sec:local-linear-algebra}
In this section, we collect some facts needed to relate conjugacy classes to compatible parabolic flags. The analogous results for nilpotent orbits can be found in \cite[Section 3]{BorhoMacPherson1983}. We include the direct proofs in our setting for completeness.

Let $V$ be a complex vector space of dimension
$r$. Fix a partition $\um=(m_1,\ldots,m_\ell)$ of $r$ and a tuple
$\uxi=(\xi_1,\ldots,\xi_\ell)\in\C^\ell$. A partial flag of type $\um$ is a chain
$0=F^\ell\subset F^{\ell-1}\subset\cdots\subset F^1\subset F^0=V$ such that
$\dim(F^{a-1}/F^a)=m_a$ for $1\leq a\leq \ell$. We denote the corresponding
partial flag variety by $\Flag_{\um}(V)$.

\begin{definition}\label{def:compatible}
	An endomorphism $A\in End(V)$ is \emph{compatible with $(\um,\uxi)$} if there
	exists a flag $F^\bullet\in\Flag_{\um}(V)$ such that
	\[
	A(F^a)\subset F^a,
	\qquad
	(A-\xi_a\id)(F^{a-1})\subset F^a
	\quad\text{for all }1\leq a\leq \ell.
	\]
	Equivalently, the induced endomorphism on each graded piece is scalar:
	\[\gr_a(A)=\xi_a\id_{F^{a-1}/F^a}.\]
	 Such a flag is called a
	\emph{compatible flag} for $A$.
\end{definition}

\noindent Define the incidence variety
\[
\widetilde X_{\um,\uxi}(V)
:=
\left\{
(A,F^\bullet)\in End(V)\times \Flag_{\um}(V)
\ \middle|\
(A-\xi_a\id)(F^{a-1})\subset F^a\text{ for all }a
\right\},
\]
and let $\mu_{\um,\uxi}:\widetilde X_{\um,\uxi}(V)\to End(V)$ be the projection
$(A,F^\bullet)\mapsto A$. We write
$X_{\um,\uxi}(V):=\mu_{\um,\uxi}(\widetilde X_{\um,\uxi}(V))$.

\begin{proposition}\label{prop:X-closed-irreducible}
	The variety $\widetilde X_{\um,\uxi}(V)$ is smooth and irreducible, and the morphism
	$\mu_{\um,\uxi}$ is projective. Moreover, $X_{\um,\uxi}(V)$ is a closed irreducible
	$\GL(V)$-stable subvariety of $End(V)$.
\end{proposition}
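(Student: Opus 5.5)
The plan is to realize $\widetilde X_{\um,\uxi}(V)$ as a homogeneous vector bundle over the partial flag variety and then push it forward. Consider the second projection $\pr_2:\widetilde X_{\um,\uxi}(V)\to\Flag_{\um}(V)$. For a fixed flag $F^\bullet$, the fiber is
\[
\{A\in End(V)\mid (A-\xi_a\id)(F^{a-1})\subset F^a \text{ for all } a\}.
\]
Choose the endomorphism $A_0$ acting on a splitting $V=\bigoplus_a W_a$ (with $W_a\cong F^{a-1}/F^a$) by $\xi_a$ on $W_a$. Since $A_0$ preserves $F^\bullet$ and induces $\xi_a\id$ on the graded pieces, the fiber is exactly the affine space $A_0+SP(F^\bullet)$. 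Here $SP(F^\bullet)=\{\phi\mid \phi(F^{a-1})\subset F^a\}$ is the nilradical of the parabolic $P(F^\bullet)$. A more canonical way to say this is that the fiber is an affine-space torsor under $SP(F^\bullet)$.

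Next I use homogeneity. $\GL(V)$ acts transitively on $\Flag_{\um}(V)=\GL(V)/P$, and $\widetilde X_{\um,\uxi}(V)$ is $\GL(V)$-stable under $g\cdot(A,F^\bullet)=(gAg^{-1},gF^\bullet)$. Hence
\[
\widetilde X_{\um,\uxi}(V)\cong \GL(V)\times^{P}\bigl(A_0+\mathfrak{n}\bigr),
\]
where $\mathfrak n=SP(F_0^\bullet)$. This uses that $P$ preserves the affine subspace $A_0+\mathfrak n$: for $p\in P$, the element $pA_0p^{-1}$ still preserves $F_0^\bullet$ and acts by $\xi_a$ on the graded pieces. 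Therefore $\widetilde X_{\um,\uxi}(V)$ is a Zariski-locally trivial affine bundle over the smooth irreducible projective variety $\Flag_{\um}(V)$. This gives smoothness and irreducibility, with
\[
\dim \widetilde X_{\um,\uxi}(V)=\dim\Flag_{\um}(V)+\dim\mathfrak n=2\dim\mathfrak n .
\]
The incidence conditions are closed, so $\widetilde X_{\um,\uxi}(V)$ is a closed subvariety of $End(V)\times\Flag_{\um}(V)$.

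For projectivity, $\mu_{\um,\uxi}$ is the restriction to a closed subscheme of the projection $End(V)\times\Flag_{\um}(V)\to End(V)$. That projection is projective because $\Flag_{\um}(V)$ is projective, so $\mu_{\um,\uxi}$ is projective. In particular $\mu_{\um,\uxi}$ is proper, so its image $X_{\um,\uxi}(V)$ is closed. The image is irreducible as the image of an irreducible variety. It is $\GL(V)$-stable because $\mu_{\um,\uxi}$ is $\GL(V)$-equivariant and the source is $\GL(V)$-stable.

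The only step needing care is the precise identification of the fiber over a flag with a single $P$-stable affine space. Concretely, one must check that the conditions $(A-\xi_a\id)(F^{a-1})\subset F^a$ force $A(F^a)\subset F^a$. This follows because $(A-\xi_{a+1}\id)(F^a)\subset F^{a+1}\subset F^a$. Once that is in place, the statement is essentially formal.
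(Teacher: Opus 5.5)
Your proof is correct and takes the same approach as the paper. In both, the fiber of $\widetilde X_{\um,\uxi}(V)\to\Flag_{\um}(V)$ is an affine space modeled on $\{B\mid B(F^{a-1})\subset F^a\}$, so $\widetilde X_{\um,\uxi}(V)$ is an affine bundle over a smooth irreducible base, and projectivity and closedness of the image come from the projectivity of $\Flag_{\um}(V)$. Your description $\GL(V)\times^{P}(A_0+\mathfrak n)$ explicitly justifies two points the paper leaves implicit: that the fibers are nonempty and that the bundle is Zariski-locally trivial.
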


\begin{proof}
  Let $\mathrm{pr}:\widetilde X_{\um,\uxi}(V)\to \Flag_{\um}(V)$ be the projection.
    For a fixed flag $F^\bullet$, the fiber of $\mathrm{pr}$ is the affine linear subspace
    of $End(V)$ defined by
    $(A-\xi_a\mathrm{id})(F^{a-1})\subset F^{a}$ for all $a$. Its translation
    vector space is
    \[
    \{B\in End(V)\mid B(F^{a-1})\subset F^a\text{ for all }a\},
    \]
    whose dimension depends only on the type $\um$. Hence
    $\widetilde X_{\um,\uxi}(V)$ is an affine bundle over the smooth irreducible
    variety $\Flag_{\um}(V)$. In particular, $\widetilde X_{\um,\uxi}(V)$ is smooth
    and irreducible.

    The morphism $\mu_{\um,\uxi}$ is projective because it is the restriction of
    the projection $End(V)\times \Flag_{\um}(V)\to End(V)$, and
    $\Flag_{\um}(V)$ is projective. Hence its image
    $X_{\um,\uxi}(V)$ is closed. Since $\widetilde X_{\um,\uxi}(V)$ is irreducible,
    its image is irreducible. The $\GL(V)$-stability is immediate from the
    definition.
\end{proof}

We now explain how conjugacy classes are encoded by pairs $(\um,\uxi)$. Let
$\mathcal O\subset End(V)$ be a conjugacy class and choose $A\in\mathcal O$. For
each eigenvalue $\lambda$, write $V_\lambda$ for the generalized eigenspace and
$A|_{V_\lambda}=\lambda I+N_\lambda$, where $N_\lambda$ is nilpotent. If the
Jordan type of $N_\lambda$ is the partition $\underline{\mu}^{(\lambda)}$, write
$(\underline{\mu}^{(\lambda)})^t=(m_{\lambda,1},\ldots,m_{\lambda,\ell_\lambda})$ for its
conjugate partition. Concatenating these sequences over all eigenvalues, and
repeating each eigenvalue $\lambda$ exactly $\ell_\lambda$ times, gives a pair
$(\um,\uxi)$.
\begin{theorem}\label{thm:orbit-correspondence}
    Let $\cO$ be a conjugacy class and let $(\um, \uxi)$ be the associated pair as above. 
    \begin{enumerate}
        \item $X_{\um,\uxi}(V)=\overline{\mathcal O}$. In particular, an endomorphism $A\in End(V)$ is compatible with $(\um,\uxi)$ (admitting a compatible flag)
    if and only if $A\in \overline{\mathcal O}$.
        \item For any $A\in \cO$, the compatible flag is unique.
    \end{enumerate}
\end{theorem}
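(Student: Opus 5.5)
We reduce to a single eigenvalue and then to the nilpotent case. A compatible flag of type $(\um,\uxi)$ for $A$ is built from the generalized eigenspace decomposition. The $\xi_a$ are grouped by eigenvalue, so we may order the blocks of $(\um,\uxi)$ with all copies of a given $\lambda$ consecutive; reordering the concatenation changes neither $\cO$ nor the closure statement. For $A\in\cO$, any compatible flag satisfies $\gr(A)$ diagonalizable with eigenvalue $\lambda$ of total multiplicity $\sum_{a:\xi_a=\lambda}m_a=\dim V_\lambda$. Intersecting with $V_\lambda$ then gives, on each $V_\lambda$, a flag compatible with $N_\lambda$ with zero graded action. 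So it suffices to treat $A=\lambda I+N$ with $N$ nilpotent, and after translation $\lambda=0$.

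For (2) in the nilpotent case, let $\underline\mu$ be the Jordan type of $N$ and $(m_1,\dots,m_\ell)=\underline\mu^t$. A flag with $N(F^{a-1})\subset F^a$ forces $F^a\supset N^a V$. Since $\dim N^aV=r-(m_1+\dots+m_a)$ (the conjugate partition counts the boxes in the first $a$ columns), the dimension condition $\dim F^a=r-\sum_{b\le a}m_b$ forces $F^a=N^aV=\mathrm{Im}(N^a)$. This flag is indeed compatible, so it is unique. Re-assembling over eigenvalues with the chosen ordering gives uniqueness in general: the $\lambda$-part of $F^a$ is pinned down because $F^a$ is $A$-stable, hence a sum of its intersections with the $V_\lambda$.

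For (1), Proposition~\ref{prop:X-closed-irreducible} gives that $X_{\um,\uxi}(V)$ is closed, irreducible and $\GL(V)$-stable, and (2) shows $\cO\subset X_{\um,\uxi}(V)$, so $\ov\cO\subset X_{\um,\uxi}(V)$. For the reverse inclusion we compare dimensions. Let $\widetilde X:=\widetilde X_{\um,\uxi}(V)$. Then
\[\dim\widetilde X=\dim\Flag_{\um}(V)+\dim\{B: B(F^{a-1})\subset F^a\}=2\sum_{a<b}m_am_b=r^2-\sum_a m_a^2,\]
which equals $\dim\cO$ by the centralizer formula $\dim Z(A)=\sum_a m_a^2$; in the nilpotent case this is the classical $\sum(\mu^t_a)^2$, and it is additive over eigenvalues. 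Since $X$ is irreducible with $\dim X\le\dim\widetilde X=\dim\cO$ and $X\supset\ov\cO$, we get $X=\ov\cO$. The "in particular" is then immediate from the definition of $X_{\um,\uxi}(V)$.

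The main obstacle is the reduction to generalized eigenspaces when the $\xi_a$ are interleaved in the given ordering. In that case the flag is not literally a concatenation, and both the centralizer count and the claim that $\cO$ is dense need care. For distinct $\xi_a$ the dimension argument above does not see the ordering, because $\dim\widetilde X$ only depends on $\um$. I would handle this by fixing the grouped ordering, which is how $(\um,\uxi)$ was defined via concatenation, and only remark that other orderings give the same $X$.
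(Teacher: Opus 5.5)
Your proof is correct. For the existence of a compatible flag on $\cO$ and for the uniqueness in (2), you argue exactly as the paper does: the image filtration $N_\lambda^aV_\lambda$, the containment $N_\lambda^aV_\lambda\subset F^a_\lambda$, and equality of dimensions when $A\in\cO$.

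The genuine difference is the inclusion $X_{\um,\uxi}(V)\subset\ov\cO$. The paper's route is as follows. It takes an arbitrary $B\in X_{\um,\uxi}(V)$ and splits a compatible flag along the generalized eigenspaces of $B$. It derives the rank inequalities $\dim N_\lambda^aV_\lambda\le\dim F^a_\lambda$ and converts them into the dominance relation $\underline\nu^{(\lambda)}\le\underline\mu^{(\lambda)}$. It then appeals to the nontrivial direction of the Gerstenhaber--Hesselink description of nilpotent orbit closures by the dominance order.

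You never examine boundary points. Proposition~\ref{prop:X-closed-irreducible} gives irreducibility of $\widetilde X_{\um,\uxi}(V)$, and $\dim\widetilde X_{\um,\uxi}(V)=2\sum_{a<b}m_am_b=r^2-\sum_am_a^2=\dim\cO$. Together these force the irreducible closed set $X_{\um,\uxi}(V)\supset\ov\cO$ to equal $\ov\cO$.

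What your route buys:
\begin{itemize}
\item It is softer and more self-contained, needing only standard dimension counts and the centralizer formula $\dim Z(A)=\sum_am_a^2$, which the paper already uses in Lemmas~\ref{lem:local-linear-algebra-injective} and~\ref{lem:dim}.
\item It avoids the closure-order theorem.
\item It avoids the bookkeeping of compatible flags for arbitrary $B$, including the tacit check that $B$ and $A$ have the same generalized eigenspace dimensions.
\item Combined with (2), it shows that $\mu_{\um,\uxi}$ is generically injective from an irreducible variety of dimension $\dim\cO$ onto $\ov\cO$, hence birational. This is precisely the local model of the forgetful map $F$.
\end{itemize}
The paper's route is more explicit: it shows, via rank conditions, why each individual point of $X_{\um,\uxi}(V)$ lies in $\ov\cO$.

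Your closing concern about interleaved orderings is not needed for the statement. The pair $(\um,\uxi)$ is defined by grouped concatenation, with each eigenvalue block in conjugate-partition order. Your assertion that other orderings give the same $X_{\um,\uxi}(V)$ is left unproved, but nothing depends on it.
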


\begin{proof}
Let $A\in\mathcal O$. For each eigenvalue $\lambda$, let $V_\lambda$ be the generalized $\lambda$-eigenspace and set $N_\lambda=(A-\lambda\id)|_{V_\lambda}$. Consider the image filtration $F^\bullet_\lambda: V_\lambda\supset N_\lambda V_\lambda\supset N_\lambda^2V_\lambda\supset\cdots$, which clearly satisfies the compatibility condition $(A-\lambda \id)(F^{a-1}) \subset F^a$ for all $a.$ Moreover, if the Jordan type of $N_\lambda$ is $\underline{\mu}^{(\lambda)}$, then the successive quotients of the image filtration have dimensions given by the conjugate partition $(\underline{\mu}^{(\lambda)})^t$ (see e.g. \cite[Remark 2.4]{LeeLee2025}). After choosing an ordering of the eigenvalue data, these filtrations give a compatible flag of type $(\um,\uxi)$. Hence $\mathcal O\subset X_{\um,\uxi}(V)$. 

Conversely, let $B\in X_{\um,\uxi}(V)$ and choose a compatible flag $F^\bullet$. The compatibility condition implies that the flag $F^\bullet$ must respect the generalized eigenspace decomposition $V=\bigoplus V_{\lambda}$ of $B$. Fix an eigenvalue $\lambda$, let $N_\lambda=(B-\lambda\id)|_{V_\lambda}$ be the nilpotent part and $F^\bullet_\lambda$ be the induced filtration of $V_\lambda$. Then the compatibility condition $N_\lambda(F^{a-1}_\lambda)\subset F^a_\lambda$ implies that $N_\lambda^aV_\lambda\subset F^a_\lambda$, and hence, 
\begin{equation}\label{eq:rank ineq}
\dim(N_\lambda^aV_\lambda)\leq \dim (F^a_\lambda), \quad \textrm{for all }a.
\end{equation}
Let $\underline{\nu}^{(\lambda)}$ be the Jordan type of $N_\lambda$. Argue as in the preceding paragraph, we have $\dim(N_\lambda^aV_\lambda) = \dim (V_\lambda) - \sum_{i=1}^a (\underline{\nu}^{(\lambda)})_i^t$ and $\dim(F^a_\lambda) = \dim(V_\lambda) - \sum_{i=1}^a (\underline{\mu}^{(\lambda)})_i^t$. Substituting them into the inequality \ref{eq:rank ineq} yields: 
\begin{equation*}
    \sum_{i=1}^a (\underline{\nu}^{(\lambda)})^t_i \geq \sum_{i=1}^a (\underline{\mu}^{(\lambda)})^t_i, \quad \textrm{for all }a.
\end{equation*}
This means that $(\underline{\nu}^{(\lambda)})^t\geq (\underline{\mu}^{(\lambda)})^t$. By taking conjugation, we have $(\underline{\nu}^{(\lambda)})\leq (\underline{\mu}^{(\lambda)})$. Therefore, $N_\lambda$ lies in the closure of the nilpotent orbit of type $\underline{\mu}^{(\lambda)}$ for every $\lambda$. It follows that the conjugacy class of $B$ is contained in $\overline{\mathcal O}$. Thus $X_{\um,\uxi}(V)\subset \overline{\mathcal O}$. Since $\mathcal O\subset X_{\um,\uxi}(V)$ and $X_{\um,\uxi}(V)$ is closed by Proposition~\ref{prop:X-closed-irreducible}, we obtain $X_{\um,\uxi}(V)=\overline{\mathcal O}$. 

Finally, we establish uniqueness. Suppose $A\in \cO$ and $F^\bullet$ is any compatible flag for $A$. Applying the argument from the previous paragraph, the compatibility condition implies that $N_\lambda^aV_\lambda\subset F^a_\lambda$ for all $a$. Since $A\in \cO$, the Jordan type of its nilpotent part $N_\lambda$ is exactly $\underline{\mu}^{(\lambda)}$. So, the inequality \ref{eq:rank ineq} becomes an equality: $\dim(N_\lambda^aV_\lambda) = \dim (V_\lambda) - \sum_{i=1}^a (\underline{\mu}^{(\lambda)})_i^t = \dim(F^a_\lambda)$. Therefore, we have $ N_\lambda^aV_\lambda=F^a_\lambda $ for all $a$, which means that the compatible flag is exactly the image filtration and is uniquely determined by $A$.
\end{proof}
\begin{remark}
	The ordering of the entries of $(\um,\uxi)$ is irrelevant up to simultaneous
	permutation. Thus the essential data are the eigenvalues together with, for
	each eigenvalue, the conjugate partition of the nilpotent Jordan type on the
	corresponding generalized eigenspace.
\end{remark}

\begin{example}\label{ex:2111}
	Consider the conjugacy class with Jordan form
	\[
	J_2(1)\oplus J_1(1)\oplus J_2(2),
	\]
	where $J_a(b)$ denotes the Jordan block of size $a$ with eigenvalue $b$.
	For the eigenvalue $1$, the Jordan type is $(2,1)$, whose transpose is again
	$(2,1)$. For the eigenvalue $2$, the Jordan type is $(2)$, whose transpose is
	$(1,1)$. Therefore the associated pair is
	$\um=(2,1,1,1)$ and $\uxi=(1,1,2,2)$, up to simultaneous permutation of the
	indices.
	
	Let $V_1$ and $V_2$ be the generalized eigenspaces for the eigenvalues $1$
	and $2$, respectively, and set $N_i=(A-i\id)|_{V_i}$. With the ordering
	$\uxi=(1,1,2,2)$, the compatible flag is given by
	\[
	F^0=V_1\oplus V_2,\qquad
	F^1=N_1V_1\oplus V_2,\qquad
	F^2=V_2,\qquad
	F^3=N_2V_2,\qquad
	F^4=0.
	\]
	Then $\dim F^{a-1}/F^a=m_a$, and one checks directly that
	$(A-\xi_a\id)(F^{a-1})\subset F^a$ for every $a$. Thus this is the compatible
	flag of type $(\um,\uxi)$. By Theorem~\ref{thm:orbit-correspondence}, the
	variety $X_{\um,\uxi}(V)$ is exactly the closure of this conjugacy class.
\end{example}

\section*{Acknowledgement}
\noindent We would like to thank Ron Donagi, Andres Fernandez Herrero, and Cheng Shu for helpful discussions.
S.L. was supported by the Institute for Basic Science (IBS-R003-D1).

\printbibliography

\end{document}